\newcommand{\figex}[1]{#1}
\renewcommand\section{\@startsection
	{section}
{1}
{0pt}
{-3.5ex plus -1ex minus -.2ex}
{2.3ex plus.2ex}
{\centering\normalfont\Large\scshape}}
\renewcommand\subsection{\@startsection
	{subsection}
{2}
{0pt}
{-3ex plus -1ex minus -.2ex}
{1ex plus.2ex}
{\normalfont\large\bfseries}}
\renewcommand\subsubsection{\@startsection
	{subsubsection}
{3}
{0pt}
{-1.5ex plus -1ex minus -.2ex}
{0.8ex plus .2ex}
{\normalfont\bfseries}}
\renewcommand\paragraph{\@startsection
	{paragraph}
{4}
{0em}
{-1.2ex plus -0.4ex minus -.2ex}
{0ex}
{\normalfont\bfseries}}
\definecolor{fondrouge}{rgb}{1,0.3,0.3}
\definecolor{fondvert}{rgb}{0.3,1,0.5}
\newcommand{\card}{\textrm{card}}
\newcommand{\Dspace}{\textrm{Dspace}}
\newcommand{\Dtime}{\textrm{Dtime}}
\newcommand{\Base}{\mathbf{B}}
\newcommand{\G}{\mathbbm{G}}
\renewcommand{\H}{\mathbbm{H}}
\newcommand{\Z}{\mathbbm{Z}}
\newcommand{\N}{\mathbbm{N}}
\newcommand{\M}{\mathbbm{M}}
\newcommand{\U}{\mathbbm{U}}
\newcommand{\BB}{\mathbbm{B}}
\newcommand{\supp}{\mathrm{supp}}
\newcommand{\A}{\mathcal{A}}
\newcommand{\B}{\mathcal{B}}
\newcommand{\cF}{\mathcal{F}}
\newcommand{\T}{\mathbf{T}}
\newcommand{\TT}[1]{\mathbf{T}\left(#1\right)}
\newcommand{\Lang}{\mathcal{L}}
\newcommand{\Vect}[1]{\left\langle#1\right\rangle_{\Z}}
\newcommand{\folder}{\textrm{Folder}}
\newcommand{\s}{\sigma}
\newcommand{\sft}{\mathcal{SFT}}
\newcommand{\sofic}{\mathcal{S}ofic}
\newcommand{\TM}{\mathcal{M}}
\newcommand{\sa}[2]{\mathbf{SA}_{#1}\left(#2\right)}
\newcommand{\aprox}[2]{\mathbf{Aprox}_{#1}\left(#2\right)}
\newcommand{\ft}[2]{\mathbf{FT}_{#1}\left(#2\right)}
\newcommand{\id}{\mathrm{Id}}
\newcommand{\vect}[1]{\mathbf{#1}}
\newcommand{\trans}[1]{\textrm{Trans}_{#1}}
\newcommand{\proj}[1]{\textrm{Proj}_{#1}}
\theoremstyle{plain}
\newtheorem{theorem}{Theorem}[section]
\newtheorem{proposition}[theorem]{Proposition}
\newtheorem{corollary}[theorem]{Corollary}
\theoremstyle{definition}
\newtheorem*{definition}{Definition}
\newtheorem*{notation}{Notation}
\theoremstyle{remark}
\newtheorem*{remark}{Remark}
\newtheorem{example}{Example}[section]
\newcounter{claimcount}[theorem]
\newcommand{\THMfont}[1]{{\sl #1}}
\newcommand{\Claim}[1]{\refstepcounter{claimcount} \vspace{0.3em}              
\noindent {\sc Claim \theclaimcount: \ }\THMfont{ #1}}
\newcommand{\bprf}[1][Proof:]{\begin{list}{}    {\setlength{\leftmargin}{0.5em}
\setlength{\rightmargin}{0em}  \setlength{\listparindent}{1em}}   \item {\em
\hspace{-1em}  #1  }}
\newcommand{\eprf}{\end{list}}
\newcommand{\bclaimprf}{\bprf}
\newcommand{\eclaimprf}{ \hfill $\Diamond$~{\scriptsize {\tt
Claim~\theclaimcount}}\eprf} 
\tikzset{math3d/.style= {x={(1cm,0cm)},y= {(0.8cm,0.6cm)}, z={(0cm,1cm)}}}
\tikzset{math3dfig/.style= {x={(1cm,0cm)},y= {(-0.706cm,-0.5cm)}, z={(0cm,1cm)}}}
\definecolor{red}{rgb}{1,0.3,0.3}
\definecolor{green}{rgb}{0,0.7,0.4}
\newcommand{\block}[3]{
\draw [dashed] (#1,#2) rectangle (#1+1,#2+1);
\draw (#1+0.5,#2+0.5) node{${#3}$};
}
\newcommand{\blockl}[2]{
\draw [dashed, fill=black] (#1,#2) rectangle (#1+1,#2+1);
}
\newcommand{\blocku}[3]{
\draw [dashed, fill=black!5] (#1,#2) rectangle (#1+1,#2+1);
\draw (#1+0.5,#2+0.5) node{${#3}$};
}
\newcommand{\blockd}[3]{
\draw [dashed, fill=black!25] (#1,#2) rectangle (#1+1,#2+1);
\draw (#1+0.5,#2+0.5) node{${#3}$};
}
\newcommand{\blockblue}[3]{
\fill[blue, opacity=0.3] (#1,#2) rectangle (#1+1,#2+1);
\draw [dashed] (#1,#2) rectangle (#1+1,#2+1);
\draw (#1+0.5,#2+0.5) node{${#3}$};
}
\newcommand{\blockred}[3]{
\fill[red, opacity=0.2] (#1,#2) rectangle (#1+1,#2+1);
\draw [dashed] (#1,#2) rectangle (#1+1,#2+1);
\draw (#1+0.5,#2+0.5) node{${#3}$};
}
\newcommand{\blockgreen}[3]{
\draw [dashed, fill=green!25] (#1,#2) rectangle (#1+1,#2+1);
\draw (#1+0.5,#2+0.5) node{${#3}$};
}
\newcommand{\blocktm}[3]{
\draw [dashed, fill=black!5] (#1,#2) rectangle (#1+1,#2+1);
\draw (#1+0.5,#2+0.5) node{${#3}$};
}
\newcommand{\blocktmhead}[4]{
\draw [dashed, fill=black!5] (#1,#2) rectangle (#1+1,#2+1);
\draw (#1,#2) -- (#1+1,#2+1);
\draw (#1+0.25,#2+0.75) node{\tiny{$#3$}};
\draw (#1+0.75,#2+0.25) node{\tiny{$#4$}};
}
\newcommand{\inside}[1]{\vbox to 9pt{\hbox{
\begin{tikzpicture}[scale=0.4]
#1
\end{tikzpicture}
}}}
\newcommand{\insidebis}[1]{\vbox to 15pt{\hbox{
\begin{tikzpicture}[scale=0.4]
#1
\end{tikzpicture}
}}}
\newcommand{\hilbertb}[4]{
\begin{scope}[xshift=#1cm,yshift=#2cm,rotate=#3,xscale=#4]
\draw (-0.5,-0.5) rectangle (0.5,0.5); 
\draw[gray!30] (0,-0.5)-- (0,0.5);
\draw[gray!30] (0.5,0)-- (-0.5,0);
\draw[blue,>=latex,->] (-0.25,-0.5)--(-0.25,0.25)--(0.25,0.25)--(0.25,-0.25)--(0.5,-0.25);
\end{scope}}
\newcommand{\hilbertl}[4]{
\begin{scope}[xshift=#1cm,yshift=#2cm,rotate=#3,xscale=#4]
\draw (-0.5,-0.5) rectangle (0.5,0.5); 
\draw[gray!30] (0,-0.5)-- (0,0.5);
\draw[gray!30] (0.5,0)-- (-0.5,0);
\draw[blue,>=latex,->] (-0.5,-0.25)--(-0.25,-0.25)--(-0.25,0.25)--(0.25,0.25)--(0.25,-0.5);
\end{scope}}
\newcommand{\hilbertlr}[4]{
\begin{scope}[xshift=#1cm,yshift=#2cm,rotate=#3,xscale=#4]
\draw (-0.5,-0.5) rectangle (0.5,0.5); 
\draw[gray!30] (0,-0.5)-- (0,0.5);
\draw[gray!30] (0.5,0)-- (-0.5,0);
\draw[blue,>=latex,->] (-0.5,-0.25)--(-0.25,-0.25)--(-0.25,0.25)--(0.25,0.25)--(0.25,-0.25)--(0.5,-0.25);
\end{scope}}
\newcommand{\hilbertbtwo}[4]{
\begin{scope}[xshift=#1cm,yshift=#2cm,rotate=#3,xscale=#4]
\hilbertlr{-0.5}{-0.5}{-90}{-1}
\hilbertb{-0.5}{0.5}{0}{1}
\hilbertl{0.5}{0.5}{0}{1}
\hilbertl{0.5}{-0.5}{90}{-1}
\end{scope}}
\newcommand{\hilbertltwo}[4]{
\begin{scope}[xshift=#1cm,yshift=#2cm,rotate=#3,xscale=#4]
\hilbertb{-0.5}{-0.5}{-90}{-1}
\hilbertb{-0.5}{0.5}{0}{1}
\hilbertl{0.5}{0.5}{0}{1}
\hilbertlr{0.5}{-0.5}{90}{-1}
\end{scope}}
\newcommand{\hilbertlrtwo}[4]{
\begin{scope}[xshift=#1cm,yshift=#2cm,rotate=#3,xscale=#4]
\hilbertb{-0.5}{-0.5}{-90}{-1}
\hilbertb{-0.5}{0.5}{0}{1}
\hilbertl{0.5}{0.5}{0}{1}
\hilbertl{0.5}{-0.5}{90}{-1}
\end{scope}}
\newcommand{\hilbertbthree}[4]{
\begin{scope}[xshift=#1cm,yshift=#2cm,rotate=#3,xscale=#4]
\hilbertlrtwo{-1}{-1}{-90}{-1}
\hilbertbtwo{-1}{1}{0}{1}
\hilbertltwo{1}{1}{0}{1}
\hilbertltwo{1}{-1}{90}{-1}
\end{scope}}
\newcommand{\hilbertlthree}[4]{
\begin{scope}[xshift=#1cm,yshift=#2cm,rotate=#3,xscale=#4]
\hilbertbtwo{-1}{-1}{-90}{-1}
\hilbertbtwo{-1}{1}{0}{1}
\hilbertltwo{1}{1}{0}{1}
\hilbertlrtwo{1}{-1}{90}{-1}
\end{scope}}
\newcommand{\hilbertlrthree}[4]{
\begin{scope}[xshift=#1cm,yshift=#2cm,rotate=#3,xscale=#4]
\hilbertbtwo{-1}{-1}{-90}{-1}
\hilbertbtwo{-1}{1}{0}{1}
\hilbertltwo{1}{1}{0}{1}
\hilbertltwo{1}{-1}{90}{-1}
\end{scope}}
\title[Speed of convergence for the realization of a subshift]{Speed of convergence for the realization of an effective subshift by a multidimensional SFT or Sofic}
\date{}
\author{Anne Crumi\`ere}
\address{ }
\email{crumierepro@gmail.com}
\urladdr{}
\author{Mathieu Sablik}
\address{LATP, Aix-Marseille Universit\'e, 39, rue F. Joliot Curie,13453 Marseille Cedex 13, France }
\email{sablik@latp.univ-mrs.fr}
\urladdr{http://www.latp.univ-mrs.fr/$\sim$sablik/}
\author{Michael Schraudner}
\address{CMI, Universidad de Chile }
\email{mschraudner@dim.uchile.cl}
\subjclass[2010]{Primary 37B10, Secondary 37B50, 03D10}
\keywords{Symbolic Dynamics, Multi-dimensional shifts of finite type,
Subaction, Projective subaction, Effectively closed subshifts,Turing
machines, Substitutive subshifts}
\newcommand{\define}{\emph}
\begin{document}

\maketitle
\begin{abstract}
Realization of $d$-dimensional effective subshifts as projective sub-actions of $d+d'$-dimensional sofic subshifts for $d'\geq 1$ is now well know~\cite{Hochman-2009,Durand-Romashchenko-Shen-2010,Aubrun-Sablik-2010}. In this paper we are interested in the speed of convergence of this realization. That is to say given an effective subshift $\Sigma$ realized as projective sub-action of a sofic $\T$, we study the function which on input an integer $k$ returns the smallest width of the strip which verify the local rules of $\T$ necessary to obtain exclusively the language of size $k$ of $\Sigma$ in the central row of the strip. We study this topological conjugacy invariant for effective subshifts in order to exhibit algorithmic properties of these subshifts.   
\end{abstract}

\section*{Introduction}
A subshift of dimension $d$ is a closed and shift-invariant subset of $\A^{\Z^d}$ where $\A$ is a finite alphabet. A subshift can be characterized by a set of forbidden patterns. With this last point of view, the simplest class is the set of subshifts of finite type, which are subshifts that can be characterized by a finite set of forbidden patterns. Applying a continuous shift-invariant function on a subshift of finite type, one obtains a sofic shift. This class of symbolic systems is the smallest class stable under factors which contains the class of subshift of finite type. In dimension one, this class can be  characterized by a set of forbidden pattern accepted by a finite automaton~\cite{Weiss-1973}.

When we consider multidimensional subshifts, we can consider their stability according to another dynamical operation: tacking a sub-action. To obtain a subshift we consider projective subaction which consists of restricting the configurations of a subshift of $\A^{\Z^d}$ to a sublattice of $\Z^d$, we obtain a subshift of $\A^{\Z^{d'}}$ where $d'<d$ is the dimension of the sublattice. The smallest class stable under this operation which contains the class of sofic is the set of effective subshifts which are subshifts that can be characterized by a set of forbidden patterns enumerated by a Turing machine. Clearly this class is stable by projective subaction and contains all sofic subshifts. Moreover a consequence of the main result of~\cite{Hochman-2009} states that every $d$-dimensional effective subshift can be obtained via projective subaction of a $d+2$-dimensional sofic. This result was improved independently in~\cite{Durand-Romashchenko-Shen-2010,Aubrun-Sablik-2010} to hold for projective subactions of $d+1$-dimensional sofics.

The three classes evoked below are stable by conjugacy and are characterized by computability property on their set of forbidden patterns. In this article, we introduce new conjugacy invariant classes based on the speed of convergence of the realization via projective subaction. This emphasizes algorithmic complexity of the set of forbidden patterns. 

In~\cite{Pavlov-Schraudner-2010}, the authors characterize which one-dimensional sofic subshifts can be obtained by a projective subaction of subshift of finite type. This classification shows a difference between certain type of sofic subshifts, according to wether their realization can be stable or unstable that is to say wether
a bounded strip around the central one is necessary to obtain the desired sofic subshift or wether there is no bounds which guarantee to the central row to be in the subshift. This approach is inspired by the notion of limit sets of cellular automata where a such that notion of stable and unstable limit-set appears~\cite{Maass-1993,Maass-1995}.

In this article, we would like to go beyond the dichotomy stable vs unstable realization and try to quantify this notion. Thus we introduce the notion of speed of convergence of the realization of an effective subshift $\Sigma$ by projective subaction of a subshift of finite type or a sofic. This is defined as the function which, for a given integer $k$, returns the width of the strip necessary to obtain the language of  the effective subshift up to a word length of size $k$ in the central rows.

Modulo an equivalence relation this quantity is invariant under conjugacy both for subshifts of finite type and sofic subshifts (see Section~\ref{section:InvariantOfConjugacy}) when we look at the speed of convergence for the projective subaction according to a given direction. Using this invariant we are able to show that a necessary condition to have two conjugate substitutive subshifts is that the substitution rules of both system must have the same size.

In another way, given an effective subshift, one can study the set of speed of convergence which realizes it as projective subaction of subshift of finite type or sofic. Modulo an equivalence relation this set is invariant under conjugacy (Sections~\ref{section:InvariantOfConjugacyBis}). In Section~\ref{section:RealizationSFTvsSofic} we exhibit a condition on effective subshift which forces sofic realization to be equivalent to subshift of finite type realization. 

In Section~\ref{section:GeneralConstruction} we compare the different constructions of realization of an effective subshift by projective subaction of a sofic and we propose a quicker construction if the effective subshift has a periodic point. When the dimension of the sofic increase the convergence is quicker. These results give upper bounds for realization by sofic, but is also possible to obtain lower bounds (see Section~\ref{section:LowerBound}). These results bring out some examples of different classes which exhibit the optimality of the different previous results (see Section~\ref{section:example}).

\section{Definitions}

\subsection{Subshifts}

Let $\A$ be a finite alphabet and let $d$ be a positive integer. A \define{configuration} $x$ is an element of $\A^{\Z^d}$. Let $\U$ be a finite subset of $\Z^d$, denote $x_{\U}$ the \define{restriction} of $x$ to $\U$. A \define{$d$-dimensional pattern} is an element $p\in\A^{\U}$ where $\U\subset\Z^d$ is finite, $\U$ is the \define{support} of $p$, which is denoted by $\supp(p)$. Denote by $\A^{\ast}$ the set of $d$-dimensional patterns. A pattern $p$ of support $\U\subset\Z^d$ \define{appears} in a configuration $x$ if there exists $\vect{i}\in\Z^d$ such that $p=x_{\vect{i}+\U}$, this is denoted by $p\sqsubset x$.

Endowing $\A$ with the discrete topology, one  considers the product topology on $\A^{\Z^d}$. For this topology, $\A^{\Z^d}$ is a compact metric space on which $\Z^d$ acts by translation via the shift map $\s$ defined for all $i\in\Z^d$ by:
$$\begin{array}{ccccc}
\s^i:&\A^{\Z^d} & \longrightarrow &\A^{\Z^d}&\\
&x&\longmapsto & \s^i(x)& \textrm{ such that } \s^i(x)_u=x_{i+u} \ \forall u\in\Z^d.
\end{array}$$

The $\Z^d$-dynamical system $(\A^{\Z^d},\s)$ is called the \define{fullshift}.  A \define{$\Z^d$-subshift} is a $\s$-invariant closed subset of $\A^{\Z^d}$.

Let $\T\subset\A^{\Z^d}$ be a subshift, $\U\subset\Z^d$ be a finite support and $n\in\N$, define:
\begin{itemize}
\item $\Lang_{\U}(\T)=\left\{p\in\A^{\U}:\textrm{ there exists }x\in\T\textrm{ such that } p\sqsubset x\right\}$ the \define{language of $\T$ of support $\U$}; 
\item $\Lang_n(\T)=\Lang_{[0,n-1]^d}(\T)$ the \define{square language of $\T$ of size $n$};
\item $\Lang_{\textrm{rect}}(\T)=\cup_{n\in\N}\Lang_n(\T)$ the \define{square language of $\T$};
\item $\Lang(\T)=\{p\in\A^{\ast}:\exists x\in\T\textrm{ such that }p\sqsubset x\}$ the \define{language of $\T$}.
\end{itemize}
We define the entropy of a $d$-dimensional subshift $\T$ as $$h(\T)=\lim_{n\to\infty}\frac{\log(\card(\Lang_n(\T)))}{n^d}=\inf_n\frac{\log(\card(\Lang_n(\T)))}{n^d}.$$

\subsection{Classes of subshifts}

Let $F$ be a set of patterns, we define the \define{subshift of forbidden patterns} $F$ by:
\[\T_F=\left\{x\in\A^{\Z^d}: \forall p\in F,\, p\not\sqsubset x \right\}.\]

Every subshift can be defined in this way and this allows to define classes of subshifts according to the complexity of $F$. Let $\T$ be a subshift, 
\begin{itemize}
\item if there exists a finite set of forbidden patterns such that $\T=\T_F$ then $\T$ is called a \define{subshift of finite type};
\item if there exists a recursively enumerable set of forbidden patterns such that $\T=\T_F$ then $\T$ is called an \define{effective subshift} (we recall that a set of patterns is recursively enumerable if there exists a Turing machine which enumerates all these patterns).
\end{itemize}

Let  $\T\subset\A^{\Z^d}$ and  $\T'\subset\B^{\Z^d}$ be two subshifts, a \define{morphism} between them is a continuous function $\pi:\T\longrightarrow\T'$ such that $\pi\circ\s^{\vect{i}}=\s^{\vect{i}}\circ \pi$ for all $\vect{i}\in\Z^d$. By Hedlund's Theorem~\cite{Hedlund-1969}, there exists a local function $\overline{\pi}:\Lang_{\U}(\T)\longrightarrow\B$ where $\U\subset\Z^d$ is a finite set called \define{neighborhood} such that $\pi(x)_{\vect{i}}=\overline{\pi}(x_{\vect{i}+\U})$ for all $x\in\A^{\Z^d}$ and $\vect{i}\in\Z^d$. For $\mathbb{V}\subset\Z^d$ and $u\in\A^{\mathbb{V}}$, by extension we denote $\overline{\pi}(u)_{\vect{i}}=\overline{\pi}(u_{\vect{i}+\U})$ for all $\vect{i}\in\Z^d$ such that $\vect{i}+\U\subset\mathbb{V}$. If there is no ambiguity one just denotes $\pi$ for $\overline{\pi}$. The \define{radius} of $\pi$ is the minimal $r\in\N$ such that $\U\subset[-r,r]^d$. If $\U=\{0\}$, the morphism is said \define{letter to letter}

Let $\T\subset\A^{\Z^d}$ be a subshift and let $\pi:\A^{\Z^d}\to\B^{\Z^d}$ be a morphism, then $\pi(\T)\subset\B^{\Z^d}$ is a subshift called the \define{factor subshift} of $\T$ by $\pi$ which is called the \define{factor map}. A subshift $\T$ is called \define{sofic} if there exists a subshift of finite type $\T_{F}$ with forbidden patterns $F$ and a factor map $\pi$ such that $\T=\pi(\T_{F})$. The factor map $\pi$ can be considered letter to letter, that is to say that the neighborhood $\U=\{\vect{0}\}$,. 

Two subshifts $\T$ and $\T'$ are \define{conjugate} if there exists a bijective morphism $\psi:\T\longrightarrow\T'$. The different classes of subshifts defined here (finite type, sofic and effective subshifts) are stable under conjugacy.

\subsection{Operations on subshifts}

\subsubsection{Finite type condition}

Let $\T\subset\A^{\Z^d}$ be a subshift and $F$ be a finite set of $d$-dimensional patterns, we consider the subshift $$\ft{F}{\T}=\left\{x\in\T : \forall p\in F, p \textrm{ does not appear in } x \right\}
.$$

Let $F$ be a finite set of $d$-dimensional patterns. It is clear that if $\T$ is a subshift of finite type (respectively a sofic subshift, an effective subshift) then $\ft{F}{\T}$ is a subshift of finite type (respectively a sofic subshift, an effective subshift).

\subsubsection{Projective subactions}

Let $\G$ be a sublattice of $\Z^d$ finitely generated by $\Base_{\G}=(\vect{u_1},\dots,\vect{u_{d'}})\in\Z^d$ ($d'\leq d)$. We can denote $\G=\Vect{\vect{u_1},\dots,\vect{u_{d'}}}=\{\lambda_1\vect{u_1}+\dots+\lambda_{d'}\vect{u_{d'}}:(\lambda_1,\dots,\lambda_{d'})\in\Z^{d'}\}$. Let $\T\subseteq\A^{\Z^d}$ be a subshift, the \define{projective subdynamic} of $\T$ with respect to $\G$ is the  subshift of dimension $d'$ defined by $\sa{\G}{\T}$ where
\[\begin{array}{rcccl}
\mathbf{SA}_{\G}:&\A^{\Z^d}&\longrightarrow&\A^{\G}&\\
&x&\longmapsto&y &\textrm{such that }  y_{\vect{i}}=x_{\vect{i}}\textrm{ for all } \vect{i}\in\G,.
\end{array}\]

Identifying $\G$ with $\Z^{d'}$ according to the basis $\Base_{\G}=(\vect{u_1},\dots,\vect{u_{d'}})$, it is possible to consider $\sa{\G}{\T}$ as a subshift of $\A^{\Z^{d'}}$ denoted $\sa{\Base_{\G}}{\T}$ where
\[\begin{array}{rcccl}
\mathbf{SA}_{\Base_{\G}}:&\A^{\Z^d}&\longrightarrow&\A^{\Z^{d'}}&\\
&x&\longmapsto&y &\textrm{such that } y_{(i_1,\dots,i_{d'})}=x_{i_1\vect{u_1}+\dots+i_{d'}\vect{u_{d'}}}\textrm{ for all } (i_1,\dots,i_{d'})\in\Z^{d'}.
\end{array}\]

Any effective subshift of dimension $d$ can be obtained with factor and projective subaction operations from a subshift of finite type of dimension $d+1$. More precisely one has the following theorem.

\begin{theorem}\cite{Hochman-2009,Aubrun-Sablik-2010,Durand-Romashchenko-Shen-2010}\label{Th:ProjectiveSubactionSofic}
Let $\Sigma\subset \A^{\Z^d}$ be an effective subshift, then $\widetilde{\Sigma}=\{x\in\A^{\Z^{d+1}}: \exists y\in\Sigma \textrm{ such that }x_{\Z^d\times\{i\}}=y \textrm{ for all }i\in\Z\}$ is sofic.

In particular there exists a subshift of finite type $\T\subset\B^{\Z^{d+1}}$ and a factor map $\pi:\B\to\A$, which can be considered letter to lefter, such that 
\[\sa{(\vect{e_1},\dots,\vect{e_{d}})}{\pi(\T)}=\Sigma.\]
\end{theorem}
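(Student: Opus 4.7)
The plan is to construct, from a Turing machine $M$ enumerating a set $F$ of forbidden patterns that defines $\Sigma$, a $(d+1)$-dimensional SFT $\T$ on an alphabet $\B=\A\times\mathcal{C}$ together with the letter-to-letter projection $\pi:\B\to\A$ such that $\pi(\T)=\widetilde{\Sigma}$. The $\A$-layer carries the configuration one wants to realize, while the auxiliary layer $\mathcal{C}$ carries a self-similar hierarchical scaffolding and a Turing-machine simulation used to rule out any horizontal slice not belonging to $\Sigma$.

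The first building block is rigidity along the extra direction $\vect{e_{d+1}}$: I would impose a nearest-neighbor SFT rule forcing the $\A$-component at $\vect{i}$ to equal that at $\vect{i}+\vect{e_{d+1}}$, so that every admissible configuration of $\T$ is of the form $x_{\Z^d\times\{i\}}=y$ for a common $y\in\A^{\Z^d}$, thus $\pi(\T)\subset\widetilde{\A^{\Z^d}}$. The second block is a substitutive/Robinson-style hierarchical SFT on the $\mathcal{C}$-layer that partitions $\Z^{d+1}$ into nested computation cells of sizes $N_k\to\infty$, with reserved channels for communication between adjacent levels. These cells will serve as dedicated regions where a simulation of $M$ can be carried out, at every scale and at every location.

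Inside each cell of size $N_k$, the $\mathcal{C}$-layer runs $M$ for a number of steps sufficient to enumerate all forbidden patterns of support diameter up to some $r_k\leq N_k$, with $r_k\to\infty$. The input of each simulation is read off the $\A$-layer visible through the cell (the hierarchical structure transports this data to the tape), and a local SFT rule is violated as soon as $M$ outputs a pattern that also appears in that portion of the $\A$-layer. To conclude one checks the two inclusions: if $y\in\Sigma$ one builds a valid configuration of $\T$ by choosing $y$ on the $\A$-layer and any consistent hierarchical pattern on $\mathcal{C}$, since no forbidden pattern is ever found; conversely, if $y\notin\Sigma$ there exists $p\in F$ with $p\sqsubset y$, and the hierarchy eventually provides a cell larger than $\supp(p)$ that both sees the occurrence and enumerates $p$, triggering a violation. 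Hence $\pi(\T)=\widetilde{\Sigma}$, and finally $\sa{(\vect{e_1},\dots,\vect{e_d})}{\widetilde{\Sigma}}=\Sigma$ by construction.

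The main obstacle is the descent from $d+2$ to $d+1$ dimensions: with two extra dimensions one can comfortably separate "configuration" from "computation", but with only one the hierarchical tiling must simultaneously host the TM tapes, their time evolution, and the transport of arbitrarily large windows of the $\A$-layer to those tapes, all through bounded-radius local rules. Engineering the substitutive SFT so that cells at every scale synchronize correctly with the rigid $\A$-layer (so that the verification is both complete and local) is the delicate part; this is precisely the content of the constructions of~\cite{Durand-Romashchenko-Shen-2010,Aubrun-Sablik-2010}, which one can invoke as a black box or reprove by the substitutive approach sketched above.
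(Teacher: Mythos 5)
The paper does not prove this theorem itself: it is quoted as a black-box result from~\cite{Hochman-2009,Durand-Romashchenko-Shen-2010,Aubrun-Sablik-2010}, and your sketch is a faithful outline of exactly the layered construction (rigid $\A$-layer along $\vect{e_{d+1}}$, hierarchical scaffolding, Turing-machine simulation detecting forbidden patterns) that those references carry out. Since you correctly identify the one genuinely hard step --- fitting the hierarchy, the tapes, and the data transport into a single extra dimension --- and explicitly defer it to the cited constructions, your proposal is consistent with how the paper itself treats the statement, though it is of course an outline rather than a self-contained proof.
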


%

\section{Speed of convergence: conjugacy invariant of  for multidimensional SFTs and sofic subshifts}\label{section:InvariantOfConjugacy}

\subsection{Definitions}

\paragraph{Approximation row}

\ In this section we consider $\G$ and $\H$ two sublattices of $\Z^d$ such that $\G\oplus\H=\Z^d$ and consider $\Base_{\G}=(\vect{u_1},\dots,\vect{u_{d'}})$ (respectively $\Base_{\H}=(\vect{v_1},\dots,\vect{v_{d''}})$) a base of $\G$ (respectively of $\H$) of course $d=d'+d''$. For all $n\in\N$, define $\BB_n^{\Base_{\H}}=\left\{k_1\vect{v_1}+\dots+ k_{d''}\vect{v_{d''}}:(k_1,\dots,k_{d''})\in [-n,n]^{d''}\right\}$. Denote $\proj{\U}:\A^{\BB_n^{\Base_{\H}}}\longrightarrow\A^{\U}$ the projection according to the coordinates $\U\subset \BB_n^{\Base_{\H}}$. For $\vect{i}\in\BB_n^{\Base_{\H}}$, $\proj{\{\vect{i}\}}$ is just denoted $\proj{\vect{i}}$.

Let $\T_F\subset\A^{\Z^d}$ be a subshift of finite type defined by the finite set of forbidden patterns $F$. We are interested in the subshift on the sublattice $\G$ which gives a row of size $n$ and which verifies the local condition given by $F$; that is to say we have a subshift indexed by $\G\times\BB_n^{\Base_{\H}}$ where no pattern of $F$ appear. Formally, one defines the \define{$n$-approximation row following $\G$  with the grow along to $\Base_{\H}$} of $\T_F$ by:
$$\T_F^{n,\G,\Base_{\H}}=\left\{x\in\left(\A^{\BB_n^{\Base_{\H}}}\right)^{\G}: \forall p\in F,  p\not\sqsubset (\proj{\vect{j}}(x_{\vect{i}}))_{(\vect{i},\vect{j})\in \G\times \BB_n^{\Base_{\H}}}\right\}.$$

Let $\pi:\A^{\Z^d}\to\B^{\Z^d}$ be a factor map. For $n$ sufficiently large, the function $\mathbf{SA}_{\Base_{\G}}:\A^{\Z^d}\longrightarrow\A^{\Z^{d'}}$ can be mistakenly used as 
\[\begin{array}{rcccl}
\mathbf{SA}_{\Base_{\G}}\circ\pi:&\T_F^{n,\G,\Base_{\H}}&\longrightarrow&\B^{\Z^{d'}}&\\
&x&\longmapsto&y &\textrm{such that } y_{(i_1,\dots,i_{d'})}=\pi(x)_{i_1\vect{u_1}+\dots+i_{d'}\vect{u_{d'}}}\textrm{ for all } (i_1,\dots,i_{d'})\in\Z^{d'}.
\end{array}\]

We verify that $$\sa{\Base_{\G}}{\pi(\T_F)}=\bigcap_{n\in\N}\sa{\Base_{\G}}{\pi\left(\T_F^{n,\G,\Base_{\H}}\right)}.$$

Let $d'<d$ and consider $(\vect{e_1},\dots,\vect{e_d})$ the canonical basis of $\Z^d$. If $\Base_{\G}=(\vect{e_1},\dots,\vect{e_{d'}})$ and $\Base_{\H}=(\vect{e_{d'+1}},\dots,\vect{e_d})$ then we denote $\T_F^{n,d\to d'}$ instead of $\T_F^{n,\G,\Base_{\H}}$.

\paragraph{Speed of convergence} \ 
By definition of $\T_F^{n,\G,\Base_{\H}}$, if $u\in \Lang\left(\sa{\Base_{\G}}{\pi(\T_F)}\right)$, then $u\in\Lang\left(\sa{\Base_{\G}}{\pi\left(\T_F^{n,\G,\Base_{\H}}\right)}\right)$. We want to quantify the reciprocal, that is to say given a $k$, find the smallest $n$ such that $u\notin \Lang_k(\sa{\Base_{\G}}{\pi(\T_F)})\Longrightarrow u\notin\Lang_k\left(\sa{\Base_{\G}}{\pi(\T_F^{n,\G,\Base_{\H}})}\right)$. This allows to quantify when a word is forbidden by the local rules $F$ in the approximation row.

The \define{speed of convergence as sofic} of the cover $\T_F$ with the factor $\pi$ following $\Base_{\G}$ with the growth along to the base $\Base_{\H}$ is the following function:
\[\begin{array}{cccl}
\varphi_{F,\pi,\Base_{\G},\Base_{\H}}:&\N&\longrightarrow &\N \\
&k&\longmapsto &\min\left\{ n\in\N : \forall u\in\A'^{[0,k-1]^{d'}} \textrm{ one has }u\notin \Lang(\sa{\Base_{\G}}{\pi(\T_F)})\Longrightarrow u\notin\Lang\left(\sa{\Base_{\G}}{\pi(\T_F^{n,\G,\Base_{\H}})}\right)\right\}.
\end{array}\]

When $\pi=\id$, we call this function the \define{speed of convergence as SFT} of $\T_F$ following $\Base_{\G}$ with the growth according to the base $\Base_{\H}$.

Clearly $\varphi_{F,\pi,\Base_{\G},\Base_{\H}}$ is non-decreasing since we have the following inclusion
\[ \sa{\Base_{\G}}{\pi(\T_F)}\subset \sa{\Base_{\G}}{\pi(\T_F^{n+1,\G,\Base_{\H}})}\subset\sa{\Base_{\G}}{\pi(\T_F^{n,\G,\Base_{\H}})}.\] 

Denote $\cF$ the set of non-decreasing functions from $\N$ to $\N$. If $\Base_{\G}=(\vect{e_1},\dots,\vect{e_{d'}})$ and $\Base_{\H}=(\vect{e_{d'+1}},\dots,\vect{e_d})$ then we denote $\varphi_{F,\pi,d\to d'}$ instead of $\varphi_{F,\pi,\Base_{\G},\Base_{\H}}$.

\begin{example}

Consider the following set of forbidden patterns
\[F=\left\{\begin{array}{l}
\insidebis{\block{0}{0}{\beta}\blockblue{1}{0}{a}\block{1}{1}{\alpha}},\insidebis{\blockblue{0}{0}{a}\blockblue{1}{0}{a}\block{1}{1}{\beta}},\inside{\blockblue{0}{0}{a}\block{1}{0}{\$}},
\\
\vspace{0.cm}
\\
\insidebis{\blockred{0}{0}{b}\block{1}{0}{\gamma}\block{0}{1}{\alpha}},
\insidebis{\blockred{0}{0}{b}\blockred{1}{0}{b}\block{0}{1}{\gamma}},
\inside{\block{0}{0}{\$}\blockred{1}{0}{b}}
\end{array}
\textrm{ such that } \begin{array}{l}
\inside{\block{0}{1}{\alpha}}\in\left\{\inside{\blockblue{0}{1}{a}},\inside{\blockred{0}{1}{b}}\right\},\\
\inside{\block{0}{1}{\beta}}\in\left\{\inside{\block{0}{1}{\$}},\inside{\blockred{0}{1}{b}}\right\},\\
\inside{\block{0}{1}{\gamma}}\in\left\{\inside{\block{0}{1}{\$}},\inside{\blockblue{0}{1}{a}}\right\}.
\end{array}
\right\}.
\]

If we consider the subshift of finite type $\T_F$, it is easy to realize that $\sa{\vect{e_1}}{\T_F}$ is the subshift where the forbidden patterns are $\left\{\$a^nb^m\$,\$a^nb^ma,ba^nb^m\$,ba^nb^ma:m\ne n\right\}$. The idea is that in a configuration of $\T_F$, if a line contains a forbidden pattern with the word $a^nb^m$ with $n\ne m$ in the center, the next line in the direction $\vect{e_2}$ contains the forbidden patterns $\$a^{n-1}b^{m-1}\$ $ and recursively. Thus it appears the pattern $a\$$ or $\$b$ and the configuration considered is excluded (see Figure~\ref{figure.exSA}).

If we consider the $n$-approximation row following $\vect{e_1}$ with the grow along to $\vect{e_2}$. In the subshift of finite type $\T^{n,\vect{e_1},\vect{e_2}}_{F}=\T^{n,2\to1}_{F}$, there is only $n$ lines to detect a forbidden pattern so $\sa{\vect{e_1}}{\T^{n,2\to1}_{F}}$ is the subshift where the forbidden patterns are $\left\{\$a^pb^m\$,\$a^pb^ma,ba^pb^m\$,ba^pb^ma:m\ne p \textrm{ and }\max(p,m)\leq n\right\}$ (Figure~\ref{figure.exSAapprox}). We deduce that $\varphi_{F,\id,2\to1}(n)=n$. In Example~\ref{ex.log} we will see that it is possible to obtain $\sa{\vect{e_1}}{\T_F}$ thanks to another SFT but with a better speed. A natural question is to determine the optimal speed. 

\figex{
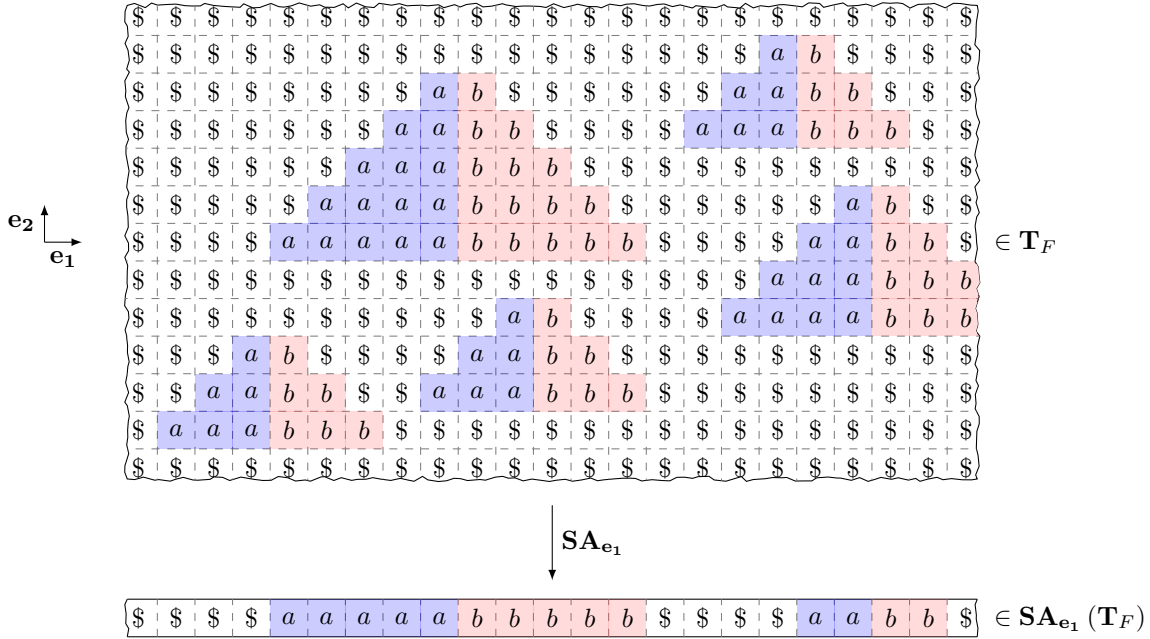
\begin{figure}[h!]
\begin{center}
\begin{tikzpicture}[scale=0.5]
\begin{scope}
\clip[draw,decorate,decoration={random steps, segment length=3pt, amplitude=1pt}] (0.2,0.2) rectangle (22.8,12.8);

\foreach \x in {0,1,...,22}{
\draw[dashed,black!50] (\x,0)--(\x,13);
\foreach \y in {0,1,...,13}{
\draw (\x+0.5,\y+0.5) node {$\$$};
}}
\foreach \y in {0,1,...,13}{
\draw[dashed,black!50] (0,\y)--(23,\y);}

\foreach \x\y in {1/1,2/1,3/1,2/2,3/2,3/3,4/6,5/6,6/6,7/6,8/6,5/7,6/7,7/7,8/7,6/8,7/8,8/8,7/9,8/9,8/10,16/4,17/4,18/4,19/4,17/5,18/5,19/5,18/6,19/6,19/7,8/2,9/2,10/2,9/3,10/3,10/4,15/9,16/9,17/9,16/10,17/10,17/11}{
\fill [white] (\x+0.1,\y+0.1) rectangle (\x+0.9,\y+0.9);
\fill [opacity=0.2,blue] (\x,\y) rectangle (\x+1,\y+1);
\draw (\x+.5,\y+.5) node{$a$};
}
\foreach \x\y in {4/1,5/1,6/1,4/2,5/2,4/3,9/6,10/6,11/6,12/6,13/6,9/7,10/7,11/7,12/7,9/8,10/8,11/8,9/9,10/9,9/10,20/4,21/4,22/4,20/5,21/5,22/5,20/6,21/6,20/7,11/2,12/2,13/2,11/3,12/3,11/4,18/9,19/9,20/9,18/10,19/10,18/11}{
\fill [white] (\x+0.1,\y+0.1) rectangle (\x+0.9,\y+0.9);
\fill [opacity=0.2,red] (\x,\y) rectangle (\x+1,\y+1);
\draw (\x+.5,\y+.5) node{$b$};
}
\end{scope}

\begin{scope}

\draw[->,>=latex] (-2,6.5) -- (-1,6.5);
\draw[->,>=latex] (-2,6.5) -- (-2,7.5);
\draw(-1.5,6.5) node[below]{$\vect{e_1}$};
\draw(-2,7) node[left]{$\vect{e_2}$};
\draw(23,6.5) node[right]{$\in\T_F$};
\draw(23,-3.5) node[right]{$\in\sa{\vect{e_1}}{\T_F}$};
\draw[->,>=latex] (11.5,-0.5) -- (11.5,-2.5);
\draw(11.5,-1.5) node[right]{$\mathbf{SA}_{\vect{e_1}}$};
\end{scope}

\begin{scope}[yshift=-10cm]

\draw[draw,decorate,decoration={random steps, segment length=3pt, amplitude=1pt}] (0.2,6)--(0.2,7);
\draw[draw,decorate,decoration={random steps, segment length=3pt, amplitude=1pt}] (22.8,6)--(22.8,7);
\draw (0.2,6)--(22.8,6);
\draw (0.2,7)--(22.8,7);

\foreach \x in {1,...,22}{
\draw[dashed,black!50] (\x,6)--(\x,7);
\draw (\x+0.5,6.5) node {$\$$};
}
\draw (0.5,6.5) node {$\$$};

\foreach \x\y in {4/6,5/6,6/6,7/6,8/6,18/6,19/6}{
\fill [white] (\x+0.1,\y+0.1) rectangle (\x+0.9,\y+0.9);
\fill [opacity=0.2,blue] (\x,\y) rectangle (\x+1,\y+1);
\draw (\x+.5,\y+.5) node{$a$};
}
\foreach \x\y in {9/6,10/6,11/6,12/6,13/6,20/6,21/6}{
\fill [white] (\x+0.1,\y+0.1) rectangle (\x+0.9,\y+0.9);
\fill [opacity=0.2,red] (\x,\y) rectangle (\x+1,\y+1);
\draw (\x+.5,\y+.5) node{$b$};
}
\end{scope}

\end{tikzpicture}
\end{center}
\caption{Application of $\mathbf{SA}_{\vect{e_1}}$ under a configuration of $\T_{F}$.}\label{figure.exSA}
\end{figure}
\begin{figure}[h!]
\begin{center}
\begin{tikzpicture}[scale=0.5]
\begin{scope}

\clip[draw,decoration={random steps, segment length=3pt, amplitude=1pt}]decorate{(0.2,3) -- (0.2,10)}--(22.8,10)decorate{--(22.8,3)}--cycle;

\foreach \x in {0,1,...,22}{
\draw[dashed,black!50] (\x,3)--(\x,10);
\foreach \y in {3,4,...,9}{
\draw (\x+0.5,\y+0.5) node {$\$$};
}}
\foreach \y in {4,5,...,9}{
\draw[dashed,black!50] (0,\y)--(23,\y);}

\foreach \x\y in {3/3,2/6,3/6,4/6,5/6,6/6,7/6,8/6,3/7,4/7,5/7,6/7,7/7,8/7,4/8,5/8,6/8,7/8,8/8,5/9,6/9,7/9,8/9,16/4,17/4,18/4,19/4,17/5,18/5,19/5,18/6,19/6,19/7,9/3,10/3,10/4,15/9,16/9,17/9}{
\fill [white] (\x+0.1,\y+0.1) rectangle (\x+0.9,\y+0.9);
\fill [opacity=0.2,blue] (\x,\y) rectangle (\x+1,\y+1);
\draw (\x+.5,\y+.5) node{$a$};
}
\foreach \x\y in {4/3,9/6,10/6,11/6,12/6,13/6,9/7,10/7,11/7,12/7,9/8,10/8,11/8,9/9,10/9,20/4,21/4,22/4,20/5,21/5,22/5,20/6,21/6,20/7,11/3,12/3,11/4,18/9,19/9,20/9}{
\fill [white] (\x+0.1,\y+0.1) rectangle (\x+0.9,\y+0.9);
\fill [opacity=0.2,red] (\x,\y) rectangle (\x+1,\y+1);
\draw (\x+.5,\y+.5) node{$b$};
}
\end{scope}

\begin{scope}

\draw[->,>=latex] (-2,6.5) -- (-1,6.5);
\draw[->,>=latex] (-2,6.5) -- (-2,7.5);
\draw(-1.5,6.5) node[below]{$\vect{e_1}$};
\draw(-2,7) node[left]{$\vect{e_2}$};
\draw(23,6.5) node[right]{$\in\T^{3,\vect{e_1},\vect{e_2}}_{F}=\T^{3,2\to1}_{F}$};
\draw(23,-0.5) node[right]{$\in\sa{\vect{e_1}}{\T^{3,2\to1}_{F}}$};
\draw[->,>=latex] (11.5,2.5) -- (11.5,0.5);
\draw(11.5,1.5) node[right]{$\mathbf{SA}_{\vect{e_1}}$};
\end{scope}

\begin{scope}[yshift=-7cm]

\draw[draw,decorate,decoration={random steps, segment length=3pt, amplitude=1pt}] (0.2,6)--(0.2,7);
\draw[draw,decorate,decoration={random steps, segment length=3pt, amplitude=1pt}] (22.8,6)--(22.8,7);
\draw (0.2,6)--(22.8,6);
\draw (0.2,7)--(22.8,7);

\foreach \x in {1,...,22}{
\draw[dashed,black!50] (\x,6)--(\x,7);
\draw (\x+0.5,6.5) node {$\$$};
}
\draw (0.5,6.5) node {$\$$};

\foreach \x\y in {2/6,3/6,4/6,5/6,6/6,7/6,8/6,18/6,19/6}{
\fill [white] (\x+0.1,\y+0.1) rectangle (\x+0.9,\y+0.9);
\fill [opacity=0.2,blue] (\x,\y) rectangle (\x+1,\y+1);
\draw (\x+.5,\y+.5) node{$a$};
}
\foreach \x\y in {9/6,10/6,11/6,12/6,13/6,20/6,21/6}{
\fill [white] (\x+0.1,\y+0.1) rectangle (\x+0.9,\y+0.9);
\fill [opacity=0.2,red] (\x,\y) rectangle (\x+1,\y+1);
\draw (\x+.5,\y+.5) node{$b$};
}
\end{scope}
\end{tikzpicture}
\end{center}
\caption{Application of $\mathbf{SA}_{\vect{e_1}}$ under a configuration of $\T^{3,\vect{e_1},\vect{e_2}}_{F}=\T^{3,2\to1}_{F}$.}\label{figure.exSAapprox}
\end{figure}}
\end{example}

\paragraph{Speed of convergence with larger row}\  Let $r\in\N$, recall that for the basis $\Base_{\H}=(\vect{v_1},\dots,\vect{v_{d''}})$ one denotes $\BB_r^{\Base_{\H}}=\left\{k_1\vect{v_1}+\dots+ k_{d''}\vect{v_{d''}}:(k_1,\dots,k_{d''})\in [-r,r]^{d''}\right\}$. Define 
\[\begin{array}{lccl}
\theta_r:&\A^{\Z^d}&\longrightarrow&\left(\A^{\BB_r^{\Base_{\H}}}\right)^{\Z^d}\\
&x&\longmapsto	&\left(x_{\vect{i}+\BB_r^{\Base_{\H}}}\right)_{\vect{i}\in\Z^d}.
\end{array}\]
Clearly $\theta_r$ is a conjugacy. Given a morphism $\pi:\A^{\Z^d}\to\B^{\Z^d}$ one just denotes $\pi=\theta_r\circ\pi\circ\theta_r^{-1}:\left(\A^{\BB_r^{\Base_{\H}}}\right)^{\Z^d}\to\left(\B^{\BB_r^{\Base_{\H}}}\right)^{\Z^d}$. Moreover, given a finite set of patterns $F$ one considers $\theta_r(F)$ is the set of patterns obtained as pre-image of $F$ by $\theta_r^{-1}$ defined by
\[\theta_r(F)=\left\{p\in\left(\A^{\BB_r^{\Base_{\H}}}\right)^{\ast}: \exists p'\in\A^\ast,\exists p''\in F \textrm{ such that }p'_\supp(p'')=p''\textrm{ and }\theta_r(p')=p\right\}.\] 
Study speed of convergence of $T_{\theta_r(F)}$ is the same that define speed of convergence of $\T_F$ for a row of size $r$ along to $\Base_{\H}$. There is no reason for the speeds of convergence of $T_F$ and $T_{\theta_r(F)}$ being comparable.

\subsection{Some Invariances} 

Note that for a given subshift of finite type, its speed of convergence depends on the finite set of forbidden patterns chosen, this variation can nevertheless be bounded as specified in the next proposition. 
\begin{proposition}\label{prop:SpeedEqualSFT}
Let $F$ and $F'$ be two finite set of forbidden patterns such that $\T_F=\T_{F'}\subset\A^{\Z^d}$. There exists $M\in\N$ such that $\left|\varphi_{F,\id,\Base_{\G},\Base_{\H}}(k)-\varphi_{F',\id,\Base_{\G},\Base_{\H}}(k)\right|\leq M$ for all $k\in\N$.
\end{proposition}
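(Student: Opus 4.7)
The plan is to bound each of $\varphi_{F,\id,\Base_{\G},\Base_{\H}}$ and $\varphi_{F',\id,\Base_{\G},\Base_{\H}}$ in terms of the other, using a standard compactness argument to pass between the two presentations of the same SFT.

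\textbf{Step 1 (compactness lemma).} Since $\T_F=\T_{F'}$, no pattern of $F'$ occurs in any configuration of $\T_F$. I claim there exists $N\in\N$ such that for every $p'\in F'$ and every pattern $q$ on $\supp(p')+[-N,N]^d$ with $q_{\supp(p')}=p'$, the pattern $q$ contains an occurrence of some pattern of $F$. Indeed, if this failed for some $p'$ and every $N$, a compactness / König's lemma extraction would produce $x\in\T_F$ with $p'\sqsubset x$, contradicting $\T_F=\T_{F'}$. Since $F'$ is finite, a single $N$ works for all $p'\in F'$. Let $r'$ denote the radius of $F'$. Symmetrically, there exist $N',r$ attached to $F$.

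\textbf{Step 2 (inclusion between approximation rows).} Fix $n\geq N+r'$. Starting from any $x\in\T_F^{n,\G,\Base_{\H}}$, view $x$ (in unfolded form) as an $F$-admissible pattern on $\G\times \BB_n^{\Base_{\H}}$, and project it to a pattern $y$ on $\G\times \BB_{n-N-r'}^{\Base_{\H}}$. For any occurrence of a pattern $p'\in F'$ at a position $v$ inside $\G\times \BB_{n-N-r'}^{\Base_{\H}}$, the enlarged region $v+\supp(p')+[-N,N]^d$ sits inside $\G\times \BB_n^{\Base_{\H}}$, so Step~1 would force a pattern of $F$ to appear there, contradicting $x\in\T_F^{n,\G,\Base_{\H}}$. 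Hence $y\in \T_{F'}^{n-N-r',\G,\Base_{\H}}$, and since $\mathbf{SA}_{\Base_{\G}}$ only reads the central row (which is preserved by this restriction),
\[
\sa{\Base_{\G}}{\T_F^{n,\G,\Base_{\H}}}\subseteq \sa{\Base_{\G}}{\T_{F'}^{n-N-r',\G,\Base_{\H}}}.
\]

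\textbf{Step 3 (bound on the speed).} Set $M_1=N+r'$. Suppose $k\in\N$ and $n=\varphi_{F',\id,\Base_{\G},\Base_{\H}}(k)+M_1$. Take any $u\in\A^{[0,k-1]^{d'}}$ with $u\notin\Lang(\sa{\Base_{\G}}{\T_F})=\Lang(\sa{\Base_{\G}}{\T_{F'}})$. By definition of $\varphi_{F',\id,\Base_{\G},\Base_{\H}}(k)$ and monotonicity of the approximation-row language in the strip width, $u$ does not appear in $\Lang(\sa{\Base_{\G}}{\T_{F'}^{n-M_1,\G,\Base_{\H}}})$; by the inclusion of Step~2, $u$ does not appear in $\Lang(\sa{\Base_{\G}}{\T_F^{n,\G,\Base_{\H}}})$ either. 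This shows $\varphi_{F,\id,\Base_{\G},\Base_{\H}}(k)\leq \varphi_{F',\id,\Base_{\G},\Base_{\H}}(k)+M_1$.

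\textbf{Step 4 (symmetry and conclusion).} Repeating Steps~1--3 with the roles of $F$ and $F'$ swapped yields $M_2=N'+r$ with $\varphi_{F',\id,\Base_{\G},\Base_{\H}}(k)\leq \varphi_{F,\id,\Base_{\G},\Base_{\H}}(k)+M_2$. Setting $M=\max(M_1,M_2)$ gives the desired uniform bound. The one slightly delicate point is the compactness argument in Step~1; everything else is bookkeeping on supports, rescaled by the lattice basis $\Base_{\H}$ (which only changes $N$ and $r'$ by a bounded factor independent of $k$).
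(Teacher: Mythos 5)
Your proof is correct and follows essentially the same route as the paper: a compactness argument showing that each finite presentation locally forces the other within a uniformly bounded window, yielding the inclusion $\sa{\Base_{\G}}{\T_{F}^{n,\G,\Base_{\H}}}\subseteq\sa{\Base_{\G}}{\T_{F'}^{n-M,\G,\Base_{\H}}}$ and its symmetric counterpart, from which the bound on the speeds follows. Your write-up is somewhat more detailed than the paper's (which compresses Steps 1--2 into a single sentence and one displayed chain of inclusions), but there is no substantive difference in the argument.
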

\begin{proof}
Let $\G$ and $\H$ be two sublattices such that $\Z^d=\G\oplus\H$ of basis $\Base_{\G}=(\vect{u_1},\dots,\vect{u_{d'}})$ and $\Base_{\H}=(\vect{u_{d'+1}},\dots,\vect{u_{d}})$. Denote $\BB_k=\{n_1\vect{u_1}+\dots+n_d\vect{u_d}:(n_1,\dots,n_d)\in[-k,k]^d\}$.

Assume that the supports of patterns of $F$ are included in $\BB_k$. There exists $M$ such that if a pattern $p\in\A^{\BB_M}$ verifies $p'\not\sqsubset p$ for all $p'\in F'$, then no pattern of $F$ appears in $p_{\BB_k}$. If not, by compactness, there exists a configuration of $\T_{F'}$ which contains a pattern of $F$. The same property holds if we reverse the role of $F$ and $F'$, moreover it is possible to choose the same $M$. 

Thus, one has $\sa{\Base_{\G}}{\T_{F'}^{n+M,\G,\Base_{\H}}}\subset\sa{\Base_{\G}}{\T_{F}^{n,\G,\Base_{\H}}}\subset\sa{\Base_{\G}}{\T_{F'}^{n-M,\G,\Base_{\H}}}$ for $n\in\N$ sufficiently large. The result follows.
\end{proof}

We are now study the behavior of speed of convergence under factor and conjugacy. 

\begin{proposition}\label{prop:FactorAndSpeed}
Let $F$ be a set of forbidden patterns such that $\T_F\subset\A^{\Z^d}$. Consider $\pi:\A^{\Z^d}\to\B^{\Z^d}$ and $\psi:\B^{\Z^d}\to\B'^{\Z^d}$ two morphisms. There exists $r$ and $r'$ such that $\varphi_{F,\psi\circ\pi,\Base_{\G},\Base_{\H}}(k)\leq\varphi_{\theta_{r'}(F),\pi,\Base_{\G},\Base_{\H}}(k+r)$. 

Moreover if the neighborhood of $\psi$ is included in $\G$, one has $\varphi_{F,\psi\circ\pi,\Base_{\G},\Base_{\H}}(k)\leq\varphi_{F,\pi,\Base_{\G},\Base_{\H}}(k+r)$.
\end{proposition}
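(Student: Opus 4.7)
The plan is to decompose the neighborhood of $\psi$ according to $\Z^d = \G \oplus \H$ and then use the higher-block conjugacy $\theta_{r'}$ to absorb the $\H$-component of this neighborhood into the alphabet, so that on the thickened presentation $\psi$ becomes a morphism whose neighborhood sits purely along $\G$. I would choose $r, r' \in \N$ so that the neighborhood of $\psi$ is contained in $\BB_r^{\Base_\G} \times \BB_{r'}^{\Base_\H}$, enlarging $r$ at the outset to absorb the factor of $2$ that appears when centering a window around a given position. The moreover assertion is then the case $r' = 0$, in which $\theta_{r'}$ is the identity and no alphabet-thickening is required.

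Set $n = \varphi_{\theta_{r'}(F),\pi,\Base_\G,\Base_\H}(k+r)$; the goal is to show $\varphi_{F,\psi\circ\pi,\Base_\G,\Base_\H}(k) \leq n$. Arguing by contrapositive, let $u \in \B'^{[0,k-1]^{d'}}$ with $u \in \Lang(\sa{\Base_\G}{(\psi\circ\pi)(\T_F^{n,\G,\Base_\H})})$ and pick a witness $x \in \T_F^{n,\G,\Base_\H}$. Any occurrence of a pattern of $\theta_{r'}(F)$ in $\theta_{r'}(x)$ would unfold to an occurrence of a pattern of $F$ in $x$, which is forbidden, so $\theta_{r'}(x) \in \T_{\theta_{r'}(F)}^{n,\G,\Base_\H}$. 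In the thick alphabet the lifted $\psi$ has neighborhood contained in $\BB_r^{\Base_\G} \times \{\vect{0}\}$, so the values of $u$ on $[0,k-1]^{d'}$ depend only on a restriction $v$ of $\pi(\theta_{r'}(x))$ to a $\G$-window of size $k+r$. Then $v$ lies in $\Lang_{k+r}(\sa{\Base_\G}{\pi(\T_{\theta_{r'}(F)}^{n,\G,\Base_\H})})$ with $\psi(v) = u$, the defining property of $n$ gives $v \in \Lang(\sa{\Base_\G}{\pi(\T_{\theta_{r'}(F)})})$, and applying $\psi$ together with the conjugacy $\theta_{r'}$ yields $u \in \Lang(\sa{\Base_\G}{(\psi\circ\pi)(\T_F)})$, as required.

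The main obstacle I anticipate is the thin-versus-thick bookkeeping: one must verify carefully that $\theta_{r'}$ sends $\T_F^{n,\G,\Base_\H}$ into $\T_{\theta_{r'}(F)}^{n,\G,\Base_\H}$, that the $\G$-window of size $k+r$ containing $v$ is indexed so as to match the definition of $\varphi_{\theta_{r'}(F),\pi,\Base_\G,\Base_\H}(k+r)$ (which may require a translation by a vector of bounded norm, exploiting the shift-invariance of $\sa{\Base_\G}{\pi(\cdot)}$), and that applying $\psi$ at the end commutes with the $\sa{\Base_\G}$-projections appearing on both sides. The moreover case sidesteps all of this: with $r' = 0$ the map $\theta_{r'}$ is trivial and the argument collapses to the direct observation that any word produced on a $k$-window by $\sa{\Base_\G} \circ (\psi\circ\pi)$ from a configuration of $\T_F^{n,\G,\Base_\H}$ arises from a word produced on a $(k+r)$-window by $\sa{\Base_\G} \circ \pi$, whence the bound follows at once from the speed of $\pi$.
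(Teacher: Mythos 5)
Your argument is correct and takes essentially the same route as the paper's proof: both absorb the $\H$-component of the neighborhood of $\psi$ into the alphabet via $\theta_{r'}$, enlarge the $\G$-window by the $\G$-radius of $\psi$ (the paper uses $k+2r$, matching your remark about inflating $r$ to absorb the centering factor), and invoke the defining property of $\varphi_{\theta_{r'}(F),\pi,\Base_{\G},\Base_{\H}}$ on that window --- you merely run the implication in the contrapositive direction, starting from a witness in the approximation row rather than from a word excluded from the limit. The thin-versus-thick bookkeeping you flag (the strip of width $n$ over the thick alphabet corresponds to width $n+r'$ over $\A$) is left implicit in the paper as well, so your treatment is no less complete than the original.
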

\begin{proof}
Let $\U$ be the neighborhood of $\psi$ and assume that $\U\subset\BB^{\Base_{\G}}_r+\BB^{\Base_{\H}}_{r'}$. Denote $d'$ and $d''$ respectively the dimension of $\G$ and $\H$. Consider $u\in\B'^{[0,k-1]^{d'}}$ such that $u\notin\Lang(\sa{\Base_{\G}}{\psi\circ\pi(\T_{F})})$. One has 
\[\theta_{r'}(\psi^{-1}(u))=\left\{v\in\left(\A^{\BB_r^{\Base_{\H}}}\right)^{[-r,k+r-1]^{d'}}: \psi(\theta_{r'}^{-1}(v))=u  \right\}\not\subset\Lang(\sa{\Base_{\G}}{\theta_{r'}(\pi(\T_{F}))}).\] 
Thus $\theta_{r'}(\psi^{-1}(u))\not\subset\Lang\left(\sa{\Base_{\G}}{\pi(\T^n_{\theta_{r'}(F)})}\right)$ for all $n\geq\varphi_{\theta_{r'}(F),\pi,\Base_{\G},\Base_{\H}}(k+2r)$. One deduces that $u\notin\Lang(\sa{\Base_{\G}}{\psi\circ\pi(\T^n_{F})})$, that is to say $n\geq\varphi_{F,\psi\circ\pi,\Base_{\G},\Base_{\H}}(k)$.
\end{proof}

Using Proposition~\ref{prop:FactorAndSpeed}, we deduce the following corollary.
\begin{corollary}\label{cor:ConjugaisonSofic}
Let $\T\subset\B^{\Z^d}$ and $\T'\subset\B'^{\Z^d}$ be two sofic subshifts. If $\T$ and $\T'$ are conjugate, there exist $\T_F\subset\A^{\Z^d}$ a subshift, $\pi:\T_F\to\T$ and $\pi':\T_F\to\T'$ two factor maps and $r,r'\in\N$ such that  
\[\varphi_{F,\pi,\Base_{\G},\Base_{\H}}(k)\leq\varphi_{\theta_{r'}(F),\pi',\Base_{\G},\Base_{\H}}(k+2r)\leq\varphi_{\theta_{2r'}(F),\pi,\Base_{\G},\Base_{\H}}(k+4r)\textrm{ for all }k\in\N.\]
One can choose $r'=0$ if the neighborhood of the conjugacy is included in $\G$.
\end{corollary}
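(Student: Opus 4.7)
The plan is to use a common SFT extension of the two sofic subshifts and then chain two applications of Proposition~\ref{prop:FactorAndSpeed}, one for each direction of the conjugacy.

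Since $\T$ is sofic, I pick a finite set of forbidden patterns $F$ on an alphabet $\A$ together with a letter-to-letter factor map $\pi \colon \T_F \to \T$. Composing with the given conjugacy $\psi \colon \T \to \T'$ produces a second factor map $\pi' = \psi \circ \pi \colon \T_F \to \T'$, so the same SFT $\T_F$ covers both $\T$ and $\T'$. I choose $r, r' \in \N$ large enough that the neighborhoods of both $\psi$ and $\psi^{-1}$ are contained in $\BB_r^{\Base_{\G}} + \BB_{r'}^{\Base_{\H}}$.

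For the first inequality I use that on $\T_F$ one has $\pi = \psi^{-1} \circ \pi'$, so Proposition~\ref{prop:FactorAndSpeed} applied with $\psi^{-1}$ as the outer morphism gives
\[\varphi_{F, \pi, \Base_{\G}, \Base_{\H}}(k) \leq \varphi_{\theta_{r'}(F), \pi', \Base_{\G}, \Base_{\H}}(k + 2r).\]
For the second inequality I apply the same proposition to the SFT $\T_{\theta_{r'}(F)}$ (the recoding of $\T_F$ along $\H$) with the factorization $\pi' = \psi \circ \pi$, evaluated at $k + 2r$ instead of $k$:
\[\varphi_{\theta_{r'}(F), \pi', \Base_{\G}, \Base_{\H}}(k + 2r) \leq \varphi_{\theta_{r'}(\theta_{r'}(F)), \pi, \Base_{\G}, \Base_{\H}}(k + 4r).\]
The only mild bookkeeping is the identification of $\theta_{r'}(\theta_{r'}(F))$ with $\theta_{2r'}(F)$ via the canonical conjugacy between the alphabets $\bigl(\A^{\BB_{r'}^{\Base_{\H}}}\bigr)^{\BB_{r'}^{\Base_{\H}}}$ and $\A^{\BB_{2r'}^{\Base_{\H}}}$, under which forbidden patterns match up.

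Finally, the ``moreover'' clause is immediate from the strengthened half of Proposition~\ref{prop:FactorAndSpeed}: if the neighborhoods of $\psi$ and $\psi^{-1}$ both lie in $\G$, then each of the two applications above requires no thickening along $\H$, and one may take $r' = 0$ throughout. The principal obstacle is purely bookkeeping: ensuring that $r$ and $r'$ can be chosen uniformly to control both $\psi$ and $\psi^{-1}$, and that the recoded forbidden pattern set behaves correctly after iterating $\theta_{r'}$. Beyond these identifications, everything reduces to a routine substitution into the bound provided by Proposition~\ref{prop:FactorAndSpeed}.
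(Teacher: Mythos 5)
Your proof is correct and follows exactly the route the paper intends: the corollary is stated as a direct consequence of Proposition~\ref{prop:FactorAndSpeed}, and your two chained applications (with $\psi^{-1}$ and then $\psi$ as outer morphisms, plus the canonical identification of $\theta_{r'}\circ\theta_{r'}$ with $\theta_{2r'}$) are the natural way to carry that out. The only implicit step worth a word is that the local rule of $\psi^{-1}$, defined a priori only on $\T'$, must be extended to a morphism of the full shift $\B'^{\Z^d}$ before Proposition~\ref{prop:FactorAndSpeed} applies, but this is standard and does not affect the argument.
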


\begin{corollary}\label{cor:ConjugaisonSFT}
Let $\T_F\subset\A^{\Z^d}$ and $\T_{F'}\subset\A'^{\Z^d}$ be two subshifts of finite type of forbidden patterns $F$ and $F'$. If $\T_F$ and $\T_{F'}$ are conjugate then there exists $r,r',M\in\N$ such that 
\[\varphi_{F,\id,\Base_{\G},\Base_{\H}}(k)\leq\varphi_{\theta_{r'}(F'),\id,\Base_{\G},\Base_{\H}}(k+2r)+M\leq\varphi_{\theta_{2r'}(F),\id,\Base_{\G},\Base_{\H}}(k+4r)+2M\textrm{ for all }k\in\N.\]

One can chose $r'=0$ if the neighborhood of the conjugacy  is included in $\G$.
\end{corollary}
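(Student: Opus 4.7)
The plan is to combine Corollary~\ref{cor:ConjugaisonSofic}, applied to $\T_F$ and $\T_{F'}$ viewed as sofic subshifts, with a compactness argument in the spirit of Proposition~\ref{prop:SpeedEqualSFT} that lets one pass from one SFT presentation of the target projective subaction to another. Let $\psi:\T_F\to\T_{F'}$ denote the given conjugacy. In the application of Corollary~\ref{cor:ConjugaisonSofic} I would take the common SFT cover to be $\T_F$ itself, together with the factor maps $\pi=\id:\T_F\to\T_F$ and $\pi'=\psi:\T_F\to\T_{F'}$. This immediately produces constants $r,r'\in\N$ such that
\[\varphi_{F,\id,\Base_{\G},\Base_{\H}}(k)\leq\varphi_{\theta_{r'}(F),\psi,\Base_{\G},\Base_{\H}}(k+2r)\leq\varphi_{\theta_{2r'}(F),\id,\Base_{\G},\Base_{\H}}(k+4r).\]
What remains is to exchange the pair ``cover $\T_F$, factor $\psi$'' with the pair ``cover $\T_{F'}$, factor $\id$'' in both the middle and the right-hand term, at the price of an additive constant $M$.

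For that exchange I would prove, as an auxiliary lemma, the existence of a constant $M\in\N$ such that for all $n$ sufficiently large,
\[\psi\bigl(\T_F^{n+M,\G,\Base_{\H}}\bigr)\subset \T_{F'}^{n,\G,\Base_{\H}}\quad\text{and}\quad \psi^{-1}\bigl(\T_{F'}^{n+M,\G,\Base_{\H}}\bigr)\subset \T_F^{n,\G,\Base_{\H}},\]
where the inclusions are understood after the natural shrinkage in the $\H$-direction induced by the local rules of $\psi$ and $\psi^{-1}$. The argument is exactly of the flavour used for Proposition~\ref{prop:SpeedEqualSFT}: if no such $M$ existed then, by compactness, one would extract in the limit a configuration in $\T_F$ whose $\psi$-image contains a pattern of $F'$, contradicting $\psi(\T_F)=\T_{F'}$. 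Projecting these inclusions onto $\Base_{\G}$ via $\mathbf{SA}_{\Base_{\G}}$ yields
\[\bigl|\varphi_{\theta_{r'}(F),\psi,\Base_{\G},\Base_{\H}}(k)-\varphi_{\theta_{r'}(F'),\id,\Base_{\G},\Base_{\H}}(k)\bigr|\leq M,\]
and the same estimate, after possibly enlarging $M$, with $\theta_{2r'}(F)$ in place of $\theta_{r'}(F)$. Substituting both bounds into the chain given by Corollary~\ref{cor:ConjugaisonSofic} produces the announced inequality.

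For the final assertion, observe that when the neighborhood of the conjugacy $\psi$ lies in $\G$, the same hypothesis is satisfied by $\psi$ in its role as the factor $\pi'$, so Corollary~\ref{cor:ConjugaisonSofic} already allows $r'=0$. The compactness step is insensitive to this restriction, hence the conclusion holds with $r'=0$. The main obstacle I foresee is the compactness lemma itself: a strip in $\T_F^{n,\G,\Base_{\H}}$ is only locally consistent with $F$ rather than a genuine restriction of an element of $\T_F$, so one has to argue that avoidance of $F$ on a sufficiently large window forces avoidance of $F'$ after applying $\psi$; doing so cleanly requires keeping track simultaneously of the $\G$-radius of $\psi$ (absorbed in the shift $+2r$ on the argument $k$), of its $\H$-radius (absorbed in $M$), and of the thickening by $\theta_{r'}$ on both sides of the comparison.
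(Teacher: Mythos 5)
Your argument is correct, but it is organized differently from the paper's. The paper does not route through Corollary~\ref{cor:ConjugaisonSofic} at all: it pulls the forbidden patterns \emph{back} through the conjugacy, setting $\psi^{-1}(F')=\{p:\exists p'\in F',\ \supp(p)=\supp(p')+\U,\ \psi(p)=p'\}$, observes that $\T_{\theta_{r'}(F)}=\T_{\theta_{r'}(\psi^{-1}(F'))}$, proves the inequality $\varphi_{F',\id,\Base_{\G},\Base_{\H}}(k)\leq\varphi_{\theta_{r'}(\psi^{-1}(F')),\id,\Base_{\G},\Base_{\H}}(k+2r)$ by directly unwinding the definition of the approximation rows (no compactness needed at this step), and only then invokes Proposition~\ref{prop:SpeedEqualSFT} to compare the two finite presentations $\theta_{r'}(F)$ and $\theta_{r'}(\psi^{-1}(F'))$ of the same SFT, which is where the additive constant $M$ comes from. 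You instead push \emph{forward}: you use the sofic corollary with the cover $\T_F$ and the factors $\id$ and $\psi$, and then you need an auxiliary lemma comparing the sofic presentation $(\theta_{r'}(F),\psi)$ of $\T_{F'}$ with its SFT presentation $(\theta_{r'}(F'),\id)$. That lemma is a genuine strengthening of Proposition~\ref{prop:SpeedEqualSFT} (it allows a conjugacy on one side of the comparison rather than two presentations of literally the same SFT), but its proof is the same compactness argument, and your handling of the two subtleties you flag --- that strips are only locally $F$-consistent, and that the $\H$-radius of $\psi$ must be absorbed into $M$ while its $\G$-radius is absorbed into the shift $k\mapsto k+2r$ --- is sound: around any position of the inner strip one has an $\H$-window of radius at least $M$ that is locally $F$-consistent, so a diagonal limit produces a genuine point of $\T_F$ whose image would contain a pattern of $F'$, contradicting $\psi(\T_F)=\T_{F'}$. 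In short, both proofs spend the same two resources (the factor-map bookkeeping of Proposition~\ref{prop:FactorAndSpeed} for the $+2r$, and a compactness step for the $+M$); the paper's version keeps the compactness step in the already-proved Proposition~\ref{prop:SpeedEqualSFT} on the domain side, while yours requires restating and reproving it on the codomain side, at the gain of reusing Corollary~\ref{cor:ConjugaisonSofic} as a black box.
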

\begin{proof}
Let $\psi:\T_{F}\longrightarrow\T_{F'}$ be the conjugacy map of neighborhood $\U\subset\BB^{\Base_{\G}}_r+\BB^{\Base_{\H}}_{r'}$. Denote \[\psi^{-1}(F')=\left\{p \in\A^{\ast} :  \exists p'\in F' \textrm{ such that } \supp(p)=\supp(p')+\U \textrm{ and }\psi(p)=p'\right\}.\]
Let $u\in\A'^{[0,k-1]^{d'}}$ and $n\geq\varphi_{\theta_{r'}(\psi^{-1}(F')),\id,\Base_{\G},\Base_{\H}}(k+2r)$, one has:
\[\begin{array}{rcll}
u\notin\sa{\Base_{\G}}{\T_{F'}}&\Longrightarrow&\theta_{r'}(\psi^{-1}(u))\cap\Lang\left(\sa{\Base_{\G}}{\theta_{r'}(\T_{\psi^{-1}(F')})}\right)=\emptyset&\\
&\Longrightarrow&\theta_{r'}(\psi^{-1}(u))\cap\Lang\left(\sa{\Base_{\G}}{\theta_{r'}(\T_{\psi^{-1}(F')}^{n,\G,\Base_{\H}})}\right)=\emptyset&\textrm{by definition of speed of convergence}\\
&\Longrightarrow&u \notin \Lang\left(\sa{\Base_{\G}}{\T_{F'}^{n,\G,\Base_{\H}}}\right)
\end{array}\]
One has $\T_{\theta_{r'}(F)}=\T_{\theta_{r'}(\psi^{-1}(F'))}$, from Proposition~\ref{prop:SpeedEqualSFT} we deduce that there exists $M\in\N$ such that
\[\varphi_{F',\id,\Base_{\G},\Base_{\H}}(k)\leq\varphi_{\theta_{r'}(\psi^{-1}(F')),\id,\Base_{\G},\Base_{\H}}(k+2r)\leq\varphi_{\theta_{r'}(F),\id,\Base_{\G},\Base_{\H}}(k+2r)+M.\]
\end{proof}

\begin{example}[Substitutive tilings]
A two-dimensional \define{substitution} is a function $s:\A\to\A^{[0,a-1]\times[0,b-1]}$ which can be naturally iterated to obtain a function $s^n:\A\to\A^{[0,a^n-1]\times[0,b^n-1]}$. The substitutive subshift is defined by \[\T_s=\{x\in\mathcal{A}^{\Z^2}: \textrm{ for all }u\sqsubset x, \textrm{ there exists $n\in\N$ and $a\in\mathcal{A}$ which verifies }u\sqsubset s^n(a)\}.\]
According to Mozes' result~\cite{Mozes-1989}, $\T_s$ is sofic, that is to say there exist a subshift of finite type $\T_{F_s}\subset\mathcal{A}_s^{\Z^2}$ and a factor map $\pi_s:\mathcal{A}_s\to\mathcal{A}$ such that $\pi_s(\T_{F_s})=\T_{s}$. In the construction of Mozes it is possible to consider that  $\pi_s$ is a conjugacy almost everywhere and there exists a substitution $\widetilde{s}$ on $\mathcal{A}_s$ which factorizes on $s$ and verifies $\T_{F_s}=\T_{\widetilde{s}}$. 

Let $\Base_{\G}=(\vect{e_1})$ and $\Base_{\H}=(\vect{e_2})$. One has $\varphi_{F_s,\id,\Base_{\G},\Base_{\H}}=\varphi_{F_s,\pi,\Base_{\G},\Base_{\H}}$ since $\pi$ is a conjugacy almost everywhere and letter to letter.

Since it is possible to apply $s$ on $\T_{F_s}^{n,\G,\Base_{\H}}$, one deduces that for all $k\in\N$ one has $\varphi_{F_s,\id,\Base_{\G},\Base_{\H}}(ak)=b\varphi_{F_s,\id,\Base_{\G},\Base_{\H}}(k)$ so there exists $M_1,M_2$ such that
\[ M_1 k^{\frac{\log(b)}{\log(a)}}\leq \varphi_{F_s,\id,\Base_{\G},\Base_{\H}}(k)\leq M_2 k^{\frac{\log(b)}{\log(a)}},\]
and in the same way
\[ M_1 k^{\frac{\log(b)}{\log(a)}}\leq \varphi_{\theta_r(F_s),\id,\Base_{\G},\Base_{\H}}(k)\leq M_2 k^{\frac{\log(b)}{\log(a)}},\]
We deduce that if two substitutions do not have the same size of support, then the substitutive subshifts associated cannot be conjugate. 
\end{example}

\section{Subshift $\varphi,d$-realizable by SFT or by sofic}\label{section:InvariantOfConjugacyBis}

A subshift $\Sigma\subset\A^{\Z^{d'}}$ is \define{$d$-realizable by subshift of finite type} (respectively \define{$d$-realizable by sofic subshift}) if there exists $\T$ a subshift of finite type (respectively a sofic subshift) and a basis $\Base_{\G}=(\vect{e_1},\dots,\vect{e_{d'}})$ such that $\sa{\Base_{\G}}{\T}=\Sigma$. It follows from Theorem 1.1 that every d'-dimensional effective subshift is d-realizable by sofic subshift for d>d', nevertheless the same result does not hold for realization by SFT (see Section~8 of~\cite{Pavlov-Schraudner-2010} for a counter-example). In the sequel, we are interested in the speed of convergence and we are going to see that it is an invariant of conjugacy for effective subshift.

\subsection{Canonical representation}

Consider $\Base=(\vect{e_1},\dots,\vect{e_{d}})$ the canonical base of $\Z^d$. Let $d',d''\in\N$ such that $d'+d''=d$, define $\Base_{\G_{d'}}=(\vect{e_1},\dots,\vect{e_{d'}})$ and $\Base_{\H_{d''}}=(\vect{e_{d'+1}},\dots,\vect{e_{d}})$. In fact when we study the speed of convergence to realize a subshift by a $d$-dimensional subshift of finite type, it is sufficient to study the realization of $\Sigma$ following $\G_{d'}$ with the growth according to the base $\Base_{\H_{d''}}$.  More precisely, one has the following proposition.

\begin{proposition}
Let $\G$ and $\H$ be two sublattices such that $\G\oplus\H=\Z^d$ of basis respectively $\Base_{\G}$ and $\Base_{\H}$.
Let $\Sigma\subset\B^{\Z^{d'}}$ be a $d'$-dimensional subshift. Assume that there exists a set of $d$-dimensional patterns $F$ and a factor map $\pi:\A^{\Z^{d}}\to\B^{\Z^{d}}$ such that $\sa{\Base_{\G}}{\pi(\T_{F})}=\Sigma$. Then there exists a set of $d$-dimensional patterns $F'$ and a factor map $\pi':\A^{\Z^{d}}\to\B^{\Z^{d}}$ such that $\sa{\Base_{\G_{d'}}}{\pi'(\T_{F'})}=\Sigma$ and $\varphi_{F,\pi,\Base_{\G},\Base_{\H}}= \varphi_{F',\pi',\Base_{\G_{d'}},\Base_{\H_{d-d'}}}$ where $\Base_{\G_{d'}}=(\vect{e_1},\dots,\vect{e_{d'}})$ and $\Base_{\H_{d-d'}}=(\vect{e_{d'+1}},\dots,\vect{e_{d}})$. 

If $\pi$ is letter to letter, one can choose $\pi$ instead $\pi'$; in particular, if $\pi=\id$ then $\varphi_{F,\id,\Base_{\G},\Base_{\H}}= \varphi_{F',\id,\Base_{\G_{d'}},\Base_{\H_{d-d'}}}$.
\end{proposition}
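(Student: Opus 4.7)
The plan is to reduce to the canonical case by a $\Z$-linear change of coordinates on $\Z^d$. Let $A\in GL_d(\Z)$ be the automorphism sending the canonical basis to the concatenation of $\Base_{\G}$ and $\Base_{\H}$, i.e.\ $A\vect{e_j}=\vect{u_j}$ for $1\leq j\leq d'$ and $A\vect{e_{d'+j}}=\vect{v_j}$ for $1\leq j\leq d''$ (where $\Base_{\H}=(\vect{v_1},\dots,\vect{v_{d''}})$). Since the concatenation of $\Base_{\G}$ and $\Base_{\H}$ is a $\Z$-basis of $\Z^d$, $A$ is indeed invertible over $\Z$. Define the homeomorphism
\[\phi:\A^{\Z^d}\longrightarrow\A^{\Z^d},\qquad \phi(x)_{\vect{i}}=x_{A\vect{i}}.\]
It satisfies $\phi\circ\s^{A\vect{j}}=\s^{\vect{j}}\circ\phi$, and since $A\Z^d=\Z^d$ it maps $\Z^d$-subshifts to $\Z^d$-subshifts. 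Setting
\[F'=\left\{p':p\in F,\ \supp(p')=A^{-1}\supp(p),\ p'(\vect{j})=p(A\vect{j})\right\},\]
one checks directly from the definitions that $\phi(\T_F)=\T_{F'}$; if $F$ is finite (resp.\ recursively enumerable) then so is $F'$.

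Next I would transport the factor map by setting $\pi'=\phi\circ\pi\circ\phi^{-1}$. This is again a morphism $\A^{\Z^d}\to\B^{\Z^d}$, obtained from $\pi$ by re-indexing its neighborhood via $A^{-1}$, and it satisfies $\pi'(\T_{F'})=\phi(\pi(\T_F))$. If $\pi$ is letter-to-letter, its local rule $\overline{\pi}:\A\to\B$ is independent of the coordinate system, hence $\pi'=\pi$; in particular $\pi=\id$ gives $\pi'=\id$.

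The key observation is that $A$ restricts to a $\Z$-linear bijection $\Z^{d'}\to\G$ sending $(i_1,\dots,i_{d'})\mapsto i_1\vect{u_1}+\dots+i_{d'}\vect{u_{d'}}$, which yields the identity
\[\sa{\Base_{\G}}{x}=\sa{\Base_{\G_{d'}}}{\phi(x)}\qquad\textrm{for all }x\in\A^{\Z^d}.\]
Combined with $\pi'\circ\phi=\phi\circ\pi$, this gives $\sa{\Base_{\G}}{\pi(\T_F)}=\sa{\Base_{\G_{d'}}}{\pi'(\T_{F'})}=\Sigma$, so $\Sigma$ is realized in canonical form. Moreover $A$ also carries $\BB_n^{\Base_{\H_{d-d'}}}$ onto $\BB_n^{\Base_{\H}}$, hence the unrolled configuration on $\G+\BB_n^{\Base_{\H}}\subset\Z^d$ maps bijectively under $A^{-1}$ to the unrolled configuration on $\G_{d'}+\BB_n^{\Base_{\H_{d-d'}}}$, and a pattern of $F$ appears in the former iff the corresponding pattern of $F'$ appears in the latter. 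Thus $\phi(\T_F^{n,\G,\Base_{\H}})=\T_{F'}^{n,\G_{d'},\Base_{\H_{d-d'}}}$, and inserting these identifications into the definition of the speed of convergence gives $\varphi_{F,\pi,\Base_{\G},\Base_{\H}}(k)=\varphi_{F',\pi',\Base_{\G_{d'}},\Base_{\H_{d-d'}}}(k)$ for every $k\in\N$.

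The proof is mostly bookkeeping and presents no serious obstacle; the one delicate point to verify carefully is that the approximation-row construction $\T_F^{n,\G,\Base_{\H}}$, viewed as a subshift on $\G$ with alphabet $\A^{\BB_n^{\Base_{\H}}}$, transforms correctly under $\phi$. This works cleanly because $\phi$ is induced by an integer change of basis that carries $\G$ to $\G_{d'}$ and $\BB_n^{\Base_{\H}}$ to $\BB_n^{\Base_{\H_{d-d'}}}$, so the local-rule condition defining the approximation row is preserved on the nose.
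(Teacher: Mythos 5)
Your proof is correct and follows essentially the same route as the paper: the paper's map $\trans{}$ sending $n_1\vect{u_1}+\dots+n_d\vect{u_d}$ to $n_1\vect{e_1}+\dots+n_d\vect{e_d}$ is exactly your $A^{-1}$, and the paper likewise transports $F$ and $\pi$ along this re-coordinatization and concludes the equalities directly. Your write-up merely makes explicit (via the conjugating homeomorphism $\phi$ and the identification of the approximation rows) the steps the paper dismisses with ``clearly''.
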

\begin{proof}
Assume that $\Base_{\G}=(\vect{u_1},\dots,\vect{u_{d'}})$ and $\Base_{\H}=(\vect{u_{d'+1}},\dots,\vect{u_{d}})$. Denote $\U$ the neighborhood of $\pi$. Consider the bijection
\[\begin{array}{rlll}
\trans{}:&\Z^d&\longrightarrow&\Z^d\\
&\vect{n}&\longmapsto& n_1\vect{e_1}+\dots+n_d\vect{e_{d}}\\
&&&\textrm{where }\vect{n}=n_1\vect{u_1}+\dots+n_d\vect{u_{d}}\textrm{ with }(n_1,\dots,n_d)\in\Z^d.
\end{array}\]
Consider the set of forbidden patterns $F'$ 
\[F'=\left\{p': \exists p\in F \textrm{ such that } p'_{\trans{}(\vect{n})}=p_{\vect{n}} \textrm{ for all }\vect{n}\in\supp(p)\right\}\]
and the factor map $\pi'$ of neighborhood $\trans{}(\U)$ defined by $\pi'((u_{\trans{}(\vect{i})})_{\vect{i}\in\U})=\pi(u)$. Clearly $\sa{\Base_{\G}}{\pi(\T_{F})}=\sa{\Base_{\G_{d'}}}{\pi'(\T_{F'})}$ and $\varphi_{F,\pi,\Base_{\G},\Base_{\H}}(k)= \varphi_{F',\pi',\Base_{\G_d},\Base_{\H_{d-d'}}}(k)$ for all $k\in\N$. If $\pi$ is letter to letter then $\pi'=\pi$. 
\end{proof}

\begin{notation}
In the following we just denote $\T_F^{n,d\to d'}$ for $\T_F^{n,\G_{d'},\Base_{\H_{d-d'}}}$, $\sa{d'}{\cdot}$ for $\sa{\G_{d'}}{\cdot}$ and $\varphi_{F,\pi,d\to d'}$ for $\varphi_{F,\pi,\Base_{\G_{d'}},\Base_{\H_{d-d'}}}$.  
\end{notation}

\subsection{How compare different speed of convergence} If a subshift is realized by a speed $\varphi$, it is possible to find another realization where the speed is slower. This is formalized by the following proposition.

\begin{proposition}
Let $\Sigma\subset\B^{\Z^{d'}}$ be a $d'$-dimensional subshift. Assume  such that $\sa{d'}{\pi(\T_{F})}=\Sigma$ for a morphism $\pi:\A\to\B$ and a finite set of forbidden patterns $F$. Let $M\in\N$ one has:
\begin{itemize}
\item there exists a finite set of $d$-dimensional patterns $F'$ and a factor $\pi'$ such that $\sa{d'}{\pi'(T_{F'})}=\Sigma$ and $\varphi_{F',\pi',d\to d'}(k)=M\varphi_{F,\pi,d\to d'}(k)$ for all $k\in\N$ (if $\pi=\id$, one can choose $\pi'=\id$).
\item there exists a finite set of $d$-dimensional patterns $F'$ and a factor $\pi'$ such that $\sa{d'}{\pi'(T_{F'})}=\Sigma$ and $\varphi_{F',\pi',d\to d'}(k)=\frac{1}{M}\varphi_{F,\pi,d\to d'}(k)$ for all $k\in\N$.
\end{itemize}

\end{proposition}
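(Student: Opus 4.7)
The two bullets are proved by dual constructions: an inflation to slow the speed down, and a compression (via higher-block recoding) to speed it up, both acting on the growth directions $\vect{e_{d'+1}},\dots,\vect{e_d}$.

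For the first bullet I would take $\A'=\A$ and $\pi'=\pi$ (so in particular $\pi'=\id$ whenever $\pi=\id$), and let $F'$ be the $M$-stretched versions of the patterns of $F$: for each $p\in F$ with support $\U$, include in $F'$ the pattern $p^{\#}$ of support $\U^{\#}=\{(i_1,\dots,i_{d'},Mi_{d'+1},\dots,Mi_d):(i_1,\dots,i_d)\in\U\}$ and values $p^{\#}_{(i_1,\dots,i_{d'},Mi_{d'+1},\dots,Mi_d)}=p_{(i_1,\dots,i_d)}$. A configuration $y\in\A^{\Z^d}$ then lies in $\T_{F'}$ if and only if, for every phase $\vect{c}\in[0,M-1]^{d-d'}$, the subsampled configuration $x^{(\vect{c})}_{(i_1,\dots,i_d)}=y_{(i_1,\dots,i_{d'},Mi_{d'+1}+c_{d'+1},\dots,Mi_d+c_d)}$ belongs to $\T_F$. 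Since the $\G_{d'}$-slice at $\vect{0}$ of $y$ coincides with the $\G_{d'}$-slice at $\vect{0}$ of $x^{(\vect{0})}\in\T_F$, one gets $\sa{d'}{\pi'(\T_{F'})}\subseteq\Sigma$; conversely every $x\in\T_F$ lifts to some $y\in\T_{F'}$ by setting $x^{(\vect{0})}=x$ and picking any admissible values for the other phase slices, giving $\sa{d'}{\pi'(\T_{F'})}=\Sigma$. Each pattern of $F'$ has non-$\G$ extent exactly $M$ times that of the corresponding pattern of $F$, so the minimal strip width required to exclude a given $u\notin\Lang_k(\Sigma)$ in the $\T_{F'}$-approximation is exactly $M$ times the analogous width for the $\T_F$-approximation, proving $\varphi_{F',\pi',d\to d'}(k)=M\varphi_{F,\pi,d\to d'}(k)$.

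For the second bullet I would introduce a higher-block alphabet $\A'=\A^{[0,M-1]^{d-d'}}\times[0,M-1]^{d-d'}$, where each cell carries an $M^{d-d'}$-block of $\A$-values together with a phase offset $\vect{c}\in[0,M-1]^{d-d'}$ recording the alignment of the $\A'$-grid over the underlying $\A$-grid. Local rules in $F'$ enforce that $\vect{c}$ is constant along $\G_{d'}$ and that adjacent cells in any non-$\G$ direction transition consistently (incrementing the relevant coordinate of $\vect{c}$ modulo $M$, with the data block updating when that coordinate wraps around), so that a canonical decoding $\rho:\T_{F'}\to\A^{\Z^d}$ is well-defined. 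For each $p\in F$ and each offset $\vect{c}$ I include in $F'$ the pattern forbidding $p$ from appearing at the corresponding position of $\rho(y)$; its support has non-$\G$ extent of order $r/M$ where $r$ is the non-$\G$ extent of $p$, a factor-$M$ compression. Define $\pi'$ to read off from each $\A'$-cell the data symbol at the offset-indicated position and then apply $\pi$. Since the phase coordinate runs through all $M^{d-d'}$ values, shifts of $y\in\T_{F'}$ in the non-$\G$ directions realize every $\H_{d-d'}$-translate of the $\G_{d'}$-slice of $\pi(\rho(y))$, yielding $\sa{d'}{\pi'(\T_{F'})}=\sa{d'}{\pi(\T_F)}=\Sigma$, and the support compression gives $\varphi_{F',\pi',d\to d'}(k)=\varphi_{F,\pi,d\to d'}(k)/M$ (interpreted with the appropriate rounding when $M$ does not divide the latter).

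The delicate point is the correctness of the $\Sigma$-projection in the speed-up: without the phase-offset layer folded into the alphabet, a shift of $y$ in the non-$\G$ direction $\vect{e_{d'+k}}$ translates $\rho(y)$ only by $M\vect{e_{d'+k}}$, and $\sa{d'}{\pi'(\T_{F'})}$ would degenerate to a strict sub-subshift of $\Sigma$ containing only the $M$-aligned slices. Once the phase coordinate and the associated wraparound rules are in place, both $\T_{F'}$ being a genuine SFT and the speed scaling reduce to straightforward bookkeeping on the supports of the forbidden patterns in $F'$.
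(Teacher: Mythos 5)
Your first bullet is correct and is essentially the paper's construction: the paper interleaves $M$ phase-shifted copies by stretching the forbidden patterns along $\vect{e_{d'+1}}$ only (via the maps $\trans{l}$), whereas you stretch in all $d-d'$ growth directions; since the approximation row $\BB_n^{\Base_{\H}}$ is a cube, both give exactly $\varphi_{F',\pi',d\to d'}=M\varphi_{F,\pi,d\to d'}$, and your verification of $\sa{d'}{\pi'(\T_{F'})}=\Sigma$ via the independent phase-subsampled configurations is fine (note only that $\pi'=\pi$ is legitimate here because $\pi$ is letter to letter, as in the statement; for a general neighborhood one must stretch the neighborhood as the paper does).

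The second bullet is where there is a genuine problem, and it is exactly at the point you flag as ``delicate.'' The concern that a plain $M$-to-$1$ block compression would only produce the $M$-aligned slices is unfounded: $\T_F$ is invariant under the full $\Z^d$-shift, in particular under $\sigma^{\vect{e_{d'+1}}}$, so the compressions of \emph{all} $\H$-translates of every $x\in\T_F$ already lie in $\T_{F'}$, and their restrictions to $\G_{d'}$ exhaust every slice of $\pi(x)$; no phase marker is needed. Worse, the phase mechanism as you describe it (the coordinate of $\vect{c}$ incrementing modulo $M$ between \emph{adjacent} cells, with the data block updating only on wraparound) is a recoding on the \emph{original} lattice in which each cell merely carries, redundantly, the $M$-block containing it. Under that recoding a window of $w$ consecutive cells in a growth direction meets at most $\lceil w/M\rceil+1$ blocks and hence determines $\rho(y)$ on an interval of length at most $w+2(M-1)$; consequently a forbidden pattern of $F$ of growth-extent $r$ cannot be detected by a pattern of extent of order $r/M$, and the claimed support compression --- hence the equality $\varphi_{F',\pi',d\to d'}=\frac{1}{M}\varphi_{F,\pi,d\to d'}$ --- fails. (Your own remark that a unit shift of $y$ translates $\rho(y)$ by $M\vect{e_{d'+k}}$ presupposes the opposite, compressed-lattice convention, so the two halves of your description are inconsistent.) The correct construction is the plain dual of the first bullet: let the new cell at $\vect{n'}+n\vect{e_{d'+1}}+\vect{n''}$ carry $x_{\vect{n'}+[Mn,Mn+M-1]\vect{e_{d'+1}}+\vect{n''}}$, with overlap/consistency conditions between adjacent blocks, transport $F$ to patterns whose $\vect{e_{d'+1}}$-extent is divided by $M$, and let $\pi'$ apply $\pi$ to the first symbol of each block; surjectivity onto $\Sigma$ then follows from shift-invariance as above, and the strip $\BB_n$ in the new lattice carries exactly the information of $\BB_{Mn}$ in the old one. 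The paper only states that the second point is ``in the same spirit,'' but this is the intended argument.
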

\begin{proof} We are going to prove the first point, the proof of the second point is in the same spirit. Let $F$ be a finite set of forbidden patterns and $\pi$ a morphism of neighborhood $\U$ such that $\sa{d'}{\pi(\T_{F})}=\Sigma$. For $l\in[0,\dots,M-1]$, define
\[\begin{array}{rlll}
\trans{l}:&\Z^d&\longrightarrow&\Z^d\\
&\vect{n}&\longmapsto& \vect{n'}+((M-1)\times n+l)\vect{e_{d'+1}}+\vect{n''}\\
&&&\textrm{where }\vect{n}=\vect{n'}+n\vect{e_{d'+1}}+\vect{n''}\textrm{ with }\vect{n'}\in\Vect{\vect{e_{1}},\dots,\vect{e_{d'}}},n\in\Z \textrm{ and }\vect{n''}\in\Vect{\vect{e_{d'+2}},\dots,\vect{e_{d}}}.
\end{array}\]
Define the set of forbidden patterns 
\[F'=\left\{p': \exists p\in F \textrm{ such that } p'_{\trans{l}(\vect{n})}=p_{\vect{n}} \textrm{ for all }\vect{n}\in\supp(p) \textrm{ and } l\in[0,\dots,M-1]\right\},\]
and $\pi'$ of neighborhood $\cup_l\trans{l}(\U)$ which apply $\pi$ on each slice $l\in[0,\dots,M-1]$.
Clearly $\sa{d'}{\pi'(\T_{F'})}=\Sigma$ and $ \varphi_{F',\pi',d\to d'}(k)=M\varphi_{F,\pi,d\to d'}(k)$ for all $k\in\N$.
\end{proof}

To speak of class of speed convergence of a $d$-dimensional subshift of finite type (or sofic subshift) towards a $d'$-dimensional subshift, we need to introduce an order relation on $\cF$ the set of non-decreasing functions from $\N$ to $\N$. 

\begin{definition}
We say that $\varphi\prec\varphi'$ if there exists $r,M\in\N$ such that  $\varphi(k)\leq M\varphi'(k+r)$ for all $k\in\N$. We say that  $\varphi\sim\varphi'$ if  $\varphi\prec\varphi'$ and  $\varphi'\prec\varphi$. 
\end{definition}

\begin{definition}
A $d'$-dimensional subshift $\Sigma$ is  \define{$\varphi,d$-realizable by projective subaction of a SFT} if there exists a set of $d$-dimensional forbidden pattern $F$ such that $\sa{d'}{\T_F}=\Sigma$ and  $\varphi_{F,\id,d\to d'}\sim\varphi$. 

A $d'$-dimensional subshift $\Sigma$ is  \define{$\varphi,d$-realizable by projective subaction of a sofic} if there exists a set of $d$-dimensional forbidden pattern $F$ and a factor $\pi$ such that $\sa{d'}{\pi(\T_F)}=\Sigma$ and  $\varphi_{F,\pi,d\to d'}\prec\varphi$. 

Denote 
\[
\begin{array}{rcl}
\cF^{\sft}_{\Sigma,d\to d'}&=&\left\{\varphi \in\cF: \exists F\underset{\textrm{fini}}{\subset}\A^{\ast} \textrm{ such that }\varphi=\varphi_{F,d\to d'} \textrm{ and }\sa{\Z^{d'}}{\T_{F}}=\Sigma\right\}\\
\textrm{ and }\cF^{\sofic}_{\Sigma,d\to d'}&=&\left\{\varphi\in\cF:\exists F\underset{\textrm{fini}}{\subset}\B^{\ast} \textrm{ and } \pi:\B\to\A \textrm{ such that }\varphi=\varphi_{F,d\to d'} \textrm{ and }\sa{\Z^{d'}}{\pi(\T_{F})}=\Sigma\right\}.
\end{array}
\]
Of course $\cF^{\sft}_{\Sigma,d\to d'}\subset\cF^{\sofic}_{\Sigma,d\to d'}$, $\cF^{\sft}_{\Sigma,d\to d'}\subset\cF^{\sft}_{\Sigma,d+1\to d'}$ and $\cF^{\sofic}_{\Sigma,d\to d'}\subset\cF^{\sofic}_{\Sigma,d+1\to d'}$.

A $d'$-dimensional subshift $\Sigma$ is  \define{sharp $\varphi,d$-realizable by projective subaction of a SFT} (respectively \define{sharp $\varphi,d$-realizable by projective subaction of a sofic}) if $\varphi\in\cF^{\sft}_{\Sigma,d\to d'}$ (respectively $\varphi\in\cF^{\sofic}_{\Sigma,d\to d'}$) and $\varphi\prec\varphi'$ for all $\varphi'\in\cF^{\sft}_{\Sigma,d\to d'}$ (respectively $\varphi'\in\cF^{\sofic}_{\Sigma,d\to d'}$). 
\end{definition}

\begin{example}
Consider the sofic subshift $\Sigma\subset\{0,1,2\}^{\Z}$ where the forbidden patterns are given by $\{10^{2n+1}2; 20^n1 : n\in\N\}$. Of course $\Sigma$ is constant realizable by projective subaction of a sofic. However, like it is unstable, it is not constant realizable by projective subaction of a SFT (see~\cite{Pavlov-Schraudner-2010}). 
\end{example}

\subsection{Invariance by conjugacy of subshift $\varphi,d$-realizable}

\begin{proposition}
Let $\Sigma$ and $\Sigma'$  be two conjugated $d'$-dimensional subshfits .  

The subshift $\Sigma$ is  $\varphi,d$-realizable by projective subaction of a subshift of finite type if and only if it is the same for $\Sigma'$.

The subshift $\Sigma$ is  $\varphi,d$-realizable by projective subaction of a sofic if and only if it is the same for $\Sigma'$.
\end{proposition}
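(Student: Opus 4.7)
The plan is to exploit the symmetry of the claim and of the relation $\sim$, so that it suffices to show that a realization of $\Sigma$ transfers to a realization of $\Sigma'$ with comparable speed. Let $\psi:\Sigma\to\Sigma'$ be the conjugacy and $\psi^{-1}$ its inverse; by Hedlund's theorem, both have local rules $\overline{\psi},\overline{\psi^{-1}}$ whose neighborhoods lie in $\Z^{d'}$. I would extend these slice-wise to block codes $\widetilde{\psi}:\A^{\Z^d}\to\A'^{\Z^d}$ and $\widetilde{\psi^{-1}}:\A'^{\Z^d}\to\A^{\Z^d}$ by viewing the neighborhood inside $\Z^{d'}\subset\Z^d$ via the canonical inclusion $(\vect{e_1},\dots,\vect{e_{d'}})$. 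Since the local rule only inspects $\G_{d'}$-coordinates, $\widetilde{\psi}$ and $\widetilde{\psi^{-1}}$ are continuous and commute with the full $\Z^d$-shift action.

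For the SFT half, assume $\sa{d'}{\T_F}=\Sigma$ with $\varphi_{F,\id,d\to d'}\sim\varphi$. The crucial observation is that, by shift-invariance of $\T_F$ in every direction, every $\G_{d'}$-slice $(x_{\vect{g}+\vect{h}})_{\vect{g}\in\G_{d'}}$ of any $x\in\T_F$ (for each $\vect{h}\in\H_{d-d'}$) lies in $\Sigma$. Consequently $\widetilde{\psi^{-1}}\circ\widetilde{\psi}$ acts slice-by-slice as $\psi^{-1}\circ\psi=\id_\Sigma$, hence is the identity on $\T_F$. Therefore $\widetilde{\psi}|_{\T_F}:\T_F\to\widetilde{\psi}(\T_F)$ is a conjugacy of $d$-dimensional subshifts, and since the class of SFTs is stable under conjugacy in any dimension, $\widetilde{\psi}(\T_F)=\T_{F'}$ is an SFT. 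Finally $\sa{d'}{\T_{F'}}=\psi(\sa{d'}{\T_F})=\Sigma'$, and Corollary~\ref{cor:ConjugaisonSFT} yields $\varphi_{F',\id,d\to d'}\sim\varphi_{F,\id,d\to d'}\sim\varphi$.

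For the sofic half, if $\sa{d'}{\pi(\T_F)}=\Sigma$ with factor map $\pi$, I would set $\pi':=\widetilde{\psi}\circ\pi$, which is again a factor map and satisfies $\sa{d'}{\pi'(\T_F)}=\psi(\Sigma)=\Sigma'$. Because the neighborhood of $\widetilde{\psi}$ sits inside $\G_{d'}$, Proposition~\ref{prop:FactorAndSpeed} applied in both directions (using $\widetilde{\psi^{-1}}\circ\pi'=\pi$ on $\T_F$, again by the slice-wise identity) gives $\varphi_{F,\pi',d\to d'}\sim\varphi_{F,\pi,d\to d'}\sim\varphi$, so $\Sigma'$ is $\varphi,d$-realizable by sofic.

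The main obstacle is the SFT half: one must verify that $\widetilde{\psi}$ restricted to $\T_F$ is a \emph{genuine} conjugacy rather than a mere factor map, so that the SFT property transfers. The saving point is the slice-wise identity $\widetilde{\psi^{-1}}\circ\widetilde{\psi}=\id$ on $\T_F$, which relies on the fact that \emph{all} $\G_{d'}$-slices of configurations in $\T_F$, and not just the central one, belong to $\Sigma$, an observation that follows from shift-invariance of $\T_F$ in the $\H_{d-d'}$ directions.
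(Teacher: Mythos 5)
Your proposal is correct and follows essentially the same route as the paper: extend the conjugacy slice-wise to a $d$-dimensional morphism with neighborhood in $\G_{d'}$, use the fact that every $\G_{d'}$-slice of a configuration in the cover lies in $\Sigma$ to see that this extension is injective (equivalently, that the slice-wise extension of $\psi^{-1}$ inverts it) on $\T_F$ respectively $\pi(\T_F)$, and then invoke Corollaries~\ref{cor:ConjugaisonSFT} and~\ref{cor:ConjugaisonSofic} to transfer the speed of convergence. The only cosmetic difference is that you spell out the SFT case in detail while the paper details the sofic case and says the SFT case is similar.
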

\begin{proof}
Let $\psi:\Sigma\longrightarrow\Sigma'$ be the conjugaison between $\Sigma\subset\A^{\Z^{d'}}$ and $\Sigma'\subset\B^{\Z^{d'}}$. The local function associated of neighborhood $\U\subset\G_{d'}$ can be extended in a function $\psi:\A^{\Z^{d}}\longrightarrow\B^{\Z^{d}}$ of same neighborhood. 

Let $\T_F$ be a subshift of finite type and $\pi$ be a morphism such that $\sa{d'}{\pi(\T_F)}=\Sigma$. Since $\psi$ is injective on $\Sigma$ and that its neighborhood is included in $\G_{d'}$, one deduces that $\psi$ is injective on $\pi(\T_F)$, so $\pi(\T_F)$ and $\psi\circ\pi(\T_F)$ are conjugate. From Corollary~\ref{cor:ConjugaisonSofic}, one deduces that $\Sigma$ and $\Sigma'$ are $\varphi_{F,\pi,d\to d'},d$-realizable. 

The SFT case comes with similar ideas and Corollary~\ref{cor:ConjugaisonSFT}.
\end{proof}

Thus it is interesting to study the sets $\cF^{\sft}_{\Sigma,d}$ or $\cF^{\sofic}_{\Sigma,d}$ modulo the equivalence relation induced by $\prec$. The next proposition shows that two elements have always a supremum.

\begin{proposition}
If $\Sigma$ is $\varphi'$ and $\varphi''$ realized by projective subaction of $d$-dimensional subshift of finite type, then there exists  $\varphi$ which realizes $\Sigma$ such that $\varphi'\prec\varphi$ and $\varphi''\prec\varphi$ and for all $\varphi'''$ such that $\varphi'\prec\varphi'''$ and $\varphi''\prec\varphi'''$ one has $\varphi\prec\varphi'''$.

The same result holds if one consider realization by projective subaction of sofic. 
\end{proposition}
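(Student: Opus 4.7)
The plan is to realize $\Sigma$ with speed equivalent to the pointwise maximum $\max(\varphi',\varphi'')$. This is the supremum of $\{\varphi',\varphi''\}$ in $(\cF,\prec)$: any $\varphi'''$ satisfying both $\varphi'\prec\varphi'''$ and $\varphi''\prec\varphi'''$ automatically satisfies $\max(\varphi',\varphi'')\prec\varphi'''$ (take the maximum of the two witnessing pairs $(r,M)$), and the bounds $\varphi',\varphi''\prec\max(\varphi',\varphi'')$ are immediate. So it suffices to produce a realization of speed $\sim\max(\varphi',\varphi'')$.

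The construction I would use is a ``tagged disjoint union''. Given realizations $\T_{F_1}\subset\A_1^{\Z^d}$ with factor $\pi_1:\A_1\to\B$ and $\T_{F_2}\subset\A_2^{\Z^d}$ with factor $\pi_2:\A_2\to\B$ of speeds $\varphi'$ and $\varphi''$, I take $\T_F$ over the disjoint alphabet $\A=\A_1\sqcup\A_2$, with forbidden set $F=F_1\cup F_2\cup F_{\mathrm{tag}}$, where $F_{\mathrm{tag}}$ contains every two-cell adjacency pattern whose letters come from different $\A_i$ (one such pattern for each of the $d$ canonical directions). By connectedness of $\Z^d$ the tag-consistency rule propagates, so every valid configuration lies entirely in $\A_1$ or entirely in $\A_2$, whence $\T_F$ is the disjoint union of tagged copies of $\T_{F_1}$ and $\T_{F_2}$. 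Defining $\pi|_{\A_i}=\pi_i$ gives $\pi(\T_F)=\pi_1(\T_{F_1})\cup\pi_2(\T_{F_2})$, and so $\sa{d'}{\pi(\T_F)}=\Sigma\cup\Sigma=\Sigma$.

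The approximation-row operation preserves this decomposition, giving
\[
\sa{d'}{\pi(\T_F^{n,d\to d'})}=\sa{d'}{\pi_1(\T_{F_1}^{n,d\to d'})}\cup\sa{d'}{\pi_2(\T_{F_2}^{n,d\to d'})}.
\]
A pattern $u$ of size $k$ not in $\Lang(\Sigma)$ is excluded from this union if and only if it is excluded from each term, so by the definitions of $\varphi_{F_1,\pi_1,d\to d'}$ and $\varphi_{F_2,\pi_2,d\to d'}$ this first occurs at $n=\max(\varphi'(k),\varphi''(k))$. Hence $\varphi_{F,\pi,d\to d'}\sim\max(\varphi',\varphi'')$, and $\varphi:=\varphi_{F,\pi,d\to d'}$ is the desired supremum in the sofic case.

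The main obstacle I expect is the SFT case, where $\pi_i=\id$ and $\A_i=\B$ must hold, so the tagged disjoint union lives over $\B\sqcup\B$ and its projective subaction is a tagged double copy of $\Sigma$ rather than $\Sigma$ itself. I would bridge this alphabet gap by combining the construction with a letter-to-letter recoding that folds the tag into the alphabet and then invoking the invariance of Corollary~\ref{cor:ConjugaisonSFT} to transfer the speed bound to a genuine SFT realization over $\B$; the additive and multiplicative constants introduced by this transfer are invisible under $\sim$, so the final speed remains $\sim\max(\varphi',\varphi'')$.
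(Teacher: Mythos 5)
Your sofic-case argument is correct, and it realizes the same target as the paper's proof: a single cover containing both given covers side by side, so that a word of length $k$ outside $\Lang(\Sigma)$ survives in the width-$n$ approximation row exactly as long as it survives in at least one of the two covers, giving speed $\max(\varphi',\varphi'')$; your observation that the pointwise maximum is the supremum in $(\cF,\prec)$ is also right. The packaging differs: you take a tagged disjoint union over $\A_1\sqcup\A_2$ and let the factor map erase the tag, whereas the paper interleaves the two SFTs over a single alphabet along the growth direction $\vect{e_{d'+1}}$ (one cover on slices $2n$, the other on slices $2n+1$), which costs a factor $2$ in the width --- invisible modulo $\sim$ --- but introduces no extra letters.

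The SFT half of the statement, which is its primary clause, is where your proof has a genuine gap, exactly at the point you flag. When $\pi_1=\pi_2=\id$ both covers live over the alphabet $\A$ of $\Sigma$ and your tagged union lives over $\A\sqcup\A$; the ``letter-to-letter recoding that folds the tag into the alphabet'' is the two-to-one map $\A\sqcup\A\to\A$, which is a factor map and not a conjugacy, so Corollary~\ref{cor:ConjugaisonSFT} (which compares two \emph{conjugate} SFTs) does not apply, and what you obtain after folding is a sofic realization rather than an SFT one. Nor can you pass to the honest union $\T_{F_1}\cup\T_{F_2}\subset\A^{\Z^d}$: a union of two subshifts of finite type over the same alphabet is sofic but in general not of finite type, and even when it happens to be, you have no finite presentation of it whose speed of convergence you control. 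This is precisely the obstruction the paper's same-alphabet interleaving is meant to sidestep, by carrying the ``which cover am I in'' information in the position (the parity of the $\vect{e_{d'+1}}$-coordinate) rather than in the letters, so that the combined object stays an SFT over $\A$ with identity factor. To complete your argument you would need such a tag-free encoding for the SFT case; as written, only the sofic clause of the proposition is proved.
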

\begin{proof}
Let $F'$ and $F''$ be finite set of $d$-dimensional patterns such that $\Sigma=\sa{d'}{\T_{F'}}=\sa{d'}{\T_{F''}}$, $\varphi'\sim\varphi_{F',\id,d'\to d}$ and $\varphi''\sim\varphi_{F'',\id,d'\to d}$. For $l\in\{0,1\}$, consider the application
\[\begin{array}{rlll}
\trans{l}:&\Z^d&\longrightarrow&\Z^d\\
&\vect{n}&\longmapsto& \vect{n'}+(2n+l)\vect{e_{d'}}+\vect{n''}\\
&&&\textrm{where }\vect{n'}\in\Vect{\vect{e_1},\dots,\vect{e_{d'}}}, \vect{n'}\in\Vect{\vect{e_{d'+2}},\dots,\vect{e_{d}}}\textrm{ and }n\in\Z\\
&&& \textrm{such that }\vect{n}=\vect{n'}+n\vect{e_{d'+1}}+\vect{n''}.
\end{array}\]
Consider the set of forbidden patterns 
\[F=\left\{p:\begin{array}{ll}& \exists p'\in F' \textrm{ such that } p_{\trans{0}(\vect{n})}=p'_{\vect{n}} \textrm{ for all }\vect{n}\in\supp(p)\\
\textrm{ or }& \exists p''\in F'' \textrm{ such that } p_{\trans{1}(\vect{n})}=p''_{\vect{n}} \textrm{ for all }\vect{n}\in\supp(p)
\end{array}\right\}\]
Clearly $\sa{\G}{\T_{F}}=\Sigma$ and $\varphi_{F,\id,d'\to d}(k)=2\max(\varphi_{F',\id,d'\to d}(k),\varphi_{F'',\id,d'\to d}(k))$ for all $k\in\N$. The result follows.

The same idea holds for realization by projective subaction of sofic.
\end{proof}

In the case of realization by projective subaction of $\Sigma$ by $d$-dimensional sofic, the next proposition shows that two elements od  $\cF^{\sofic}_{\Sigma,d}$ admit also an infimum for the order $\prec$, so $(\cF^{\sofic}_{\Sigma,d},\prec)$ is a lattice order which is preserved by conjgacy.

\begin{proposition}
If $\Sigma$ is $\varphi'$ and $\varphi''$ realized by projective subaction of $d$-dimensional sofic, then there exists  $\varphi$ which realizes $\Sigma$ such that $\varphi\prec\varphi'$ and $\varphi\prec\varphi''$ and for all $\varphi'''$ such that $\varphi'''\prec\varphi'$ and $\varphi'''\prec\varphi'$ one has $\varphi'''\prec\varphi$.
\end{proposition}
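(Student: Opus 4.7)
The plan is to mirror the construction used in the previous proposition (for the supremum) but with the combination flipped from an ``OR-interleaving'' to an ``AND-coupling'' of the two sofic covers, so that the new cover's approximation is bounded by the intersection of the two individual approximations. Given sofic realizations $(\T_{F'},\pi')$ and $(\T_{F''},\pi'')$ of $\Sigma$ with respective speeds $\varphi'=\varphi_{F',\pi',d\to d'}$ and $\varphi''=\varphi_{F'',\pi'',d\to d'}$, I would work over the product alphabet $\A=\A'\times\A''$ and build the SFT
\[
\T_F=\bigl\{(x',x'')\in\T_{F'}\times\T_{F''} : \pi'(x'_{\vect{i}})=\pi''(x''_{\vect{i}})\ \forall\, \vect{i}\in\Z^d\bigr\},
\]
together with the letter-to-letter factor $\pi(a',a''):=\pi'(a')=\pi''(a'')$. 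Since the matching condition $\pi'(a')=\pi''(a'')$ is a local constraint on pairs of letters, $\T_F$ is genuinely an SFT, and $\pi(\T_F)=\pi'(\T_{F'})\cap\pi''(\T_{F''})$.

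For the speed bound, any configuration of $\T_F^{n,d\to d'}$ lifts simultaneously to a configuration of $\T_{F'}^{n,d\to d'}$ and of $\T_{F''}^{n,d\to d'}$ whose $\pi$-projections coincide, so $\sa{d'}{\pi(\T_F^{n,d\to d'})}\subseteq\sa{d'}{\pi'(\T_{F'}^{n,d\to d'})}\cap\sa{d'}{\pi''(\T_{F''}^{n,d\to d'})}$. Hence a word $u\notin\Sigma$ is excluded from $\sa{d'}{\pi(\T_F^{n,d\to d'})}$ as soon as either individual $n$-approximation already excludes it, yielding $\varphi_{F,\pi,d\to d'}(k)\leq\min(\varphi'(k),\varphi''(k))$ pointwise, and in particular $\varphi_{F,\pi,d\to d'}\prec\varphi'$ and $\varphi_{F,\pi,d\to d'}\prec\varphi''$.

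The main obstacle is to check that this fiber product cover still realizes \emph{all} of $\Sigma$, i.e.\ $\sa{d'}{\pi(\T_F)}=\Sigma$: a configuration $\sigma\in\Sigma$ must admit a simultaneous extension in both $\pi'(\T_{F'})$ and $\pi''(\T_{F''})$, which need not hold for arbitrary sofic covers. To bypass this, I would first replace each of the two given covers by an $\sim$-equivalent one whose $d$-dimensional image contains the canonical vertical extension $\widetilde{\Sigma}$ of Theorem~\ref{Th:ProjectiveSubactionSofic}; this can be arranged by the disjoint-alphabet union trick (as exploited in the previous proposition) combined with an auxiliary cover of $\widetilde{\Sigma}$, which by Corollary~\ref{cor:ConjugaisonSofic} and Proposition~\ref{prop:FactorAndSpeed} preserves the $\sim$-class of the speed. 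Once both covers share $\widetilde{\Sigma}$ in their $d$-dimensional image, their fiber product also contains $\widetilde{\Sigma}$, so $\sa{d'}{\pi(\T_F)}\supseteq\sa{d'}{\widetilde{\Sigma}}=\Sigma$, and equality follows from the reverse containment obtained by projection.

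It remains to establish the maximality: if $\varphi'''\prec\varphi'$ and $\varphi'''\prec\varphi''$, then there exist $M,r$ with $\varphi'''(k)\leq M\min(\varphi'(k+r),\varphi''(k+r))$ for all $k$. After the enlargement step above, the speed $\varphi:=\varphi_{F,\pi,d\to d'}$ is $\sim$-equivalent to $\min(\varphi',\varphi'')$, so $\varphi'''\prec\varphi$, which gives the required infimum property. Symmetrically, this same construction handles any finite collection of realizations, so $(\cF^{\sofic}_{\Sigma,d},\prec)$ is genuinely a lattice, as announced.
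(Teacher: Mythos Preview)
Your fiber-product $\T_F=\{(x',x'')\in\T_{F'}\times\T_{F''}:\pi'(x')=\pi''(x'')\}$ together with $\pi$ acting as $\pi'$ on the first factor is precisely the paper's construction, and your pointwise bound $\varphi_{F,\pi,d\to d'}(k)\le\min(\varphi'(k),\varphi''(k))$ matches what the paper states (the paper in fact asserts equality). You are right to flag the coverage question: the paper simply writes ``One has $\Sigma=\sa{d'}{\pi(\T_F)}$'' with no argument, and this inclusion $\supseteq$ can genuinely fail. For instance with $\Sigma=\{0,1\}^{\Z}$, $d=2$, $\pi'=\pi''=\id$, $\T_{F'}$ the vertically constant configurations and $\T_{F''}$ the diagonally constant ones, each has $\sa{1}{\cdot}=\Sigma$, yet their fiber product contains only the two globally constant points.

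The gap is in your repair. Adjoining an auxiliary sofic cover of $\widetilde{\Sigma}$ to $(\T_{F'},\pi')$ by a disjoint-alphabet union produces a new cover whose speed is the \emph{maximum} of $\varphi'$ and the auxiliary speed $\psi$: a word $u\notin\Sigma$ must be excluded from \emph{both} components of the union before it disappears from the $n$-approximation. Neither Corollary~\ref{cor:ConjugaisonSofic} nor Proposition~\ref{prop:FactorAndSpeed} bounds $\psi$; to keep the $\sim$-class unchanged you would need an auxiliary realization of $\widetilde{\Sigma}$ with speed $\prec\varphi'$ (and likewise $\prec\varphi''$), and nothing available so far provides one in general. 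After your augmentation the fiber product therefore has speed bounded only by $\max\bigl(\min(\varphi',\varphi''),\psi\bigr)$, so the equivalence $\varphi\sim\min(\varphi',\varphi'')$ --- and with it the maximality clause in your final paragraph --- is not established.
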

\begin{proof}
Let $F'$ and $F''$ be finite set of $d$-dimensional patterns and $\pi',\pi''$ two morphisms ,such that $\Sigma=\sa{d'}{\pi'(\T_{F'})}=\sa{d'}{\pi''(\T_{F''})}$, $\varphi'\sim\varphi_{F',\pi',d'\to d}$ and $\varphi''\sim\varphi_{F'',\pi'',d'\to d}$. 

Consider the finite set of forbidden patterns $F$ such that $\T_F=\left\{(x,y)\in\T_{F'}\times\T_{F''}:\pi'(x)=\pi''(y)\right\}$ and the morphism $\pi$ which apply $\pi'$ on the first coordinate of $\T_F$ (or $\pi''$ on the second coordinate). One has $\Sigma=\sa{d'}{\pi(\T_{F})}$ and $\varphi_{F,\pi,d'\to d}(k)=\min(\varphi_{F',\pi',d'\to d}(k),\varphi_{F'',\pi'',d'\to d}(k))$.
\end{proof}

\begin{remark}
We do not know if this result holds for realization by projective subaction of subshift of finite type. 
\end{remark}
\begin{remark}It is also easy to check that $\cF^{\sofic}_{\Sigma,d}$ is stable by addition and multiplication. 
\end{remark}

%

\section{Realization of subshift by projective subaction of SFT}\label{section:RealizationSFTvsSofic}

For an effective subshift $\Sigma$, generally $\cF^{\sft}_{\Sigma,d}$ and $\cF^{\sofic}_{\Sigma,d}$ are different. For example in~\cite{Pavlov-Schraudner-2010} the authors exhibit a large class of effective Sturmian subshifts which can be realized by projective subaction of sofic subshift but which cannot be realized by projective subaction of subshift of finite type. In this section we give a sufficient condition on an effective subshift $\Sigma$ to have $\cF^{\sft}_{\Sigma,d}$ and $\cF^{\sofic}_{\Sigma,d}$ equal modulo the equivalence relation $\sim$.

\begin{definition}
A $d$-dimensional subshift $\Sigma\subset\A^{\Z^{d}}$ verifies the $\mathcal{H}$ condition if there exist $N,N'\in\N$ and $\Sigma'\subset\Sigma$ an effective subshift  such that for all $z^{\star}\in\Sigma'$ one can find $\overline{z^{\star}}\in\Sigma'$,  and $\M\subset\Z^{d}$ such that:
\begin{itemize}
\item for all $\vect{i}\in\Z^d$ there exists $\vect{i'}\in\M$ such that $\vect{i}\in\vect{i}'+[0,N]^{d}$;
\item for all $\vect{i}\in\M$ one has $z^{\star}_{\vect{i}}\ne \overline{z^{\star}}_{\vect{i}}$ and $z^{\star}_{\vect{i}-\vect{e_l}} =\overline{z^{\star}}_{\vect{i}-\vect{e_l}}$ for all $l\in[1,d]$;
\item for all $\vect{i}\notin\M+[0,N']^d$ one has $z^{\star}_{\vect{i}}= \overline{z^{\star}}_{\vect{i}}$;
\item moreover, \[\textrm{for all $z\in\A^{\Z^{d}}$ which verifies }
\begin{cases}
\textrm{ $z_{\vect{i}}= z^{\star}_{\vect{i}}$} &\textrm{ for all $\vect{i}\notin\M+[0,N']^d$, }\\
\textrm{ $z_{\vect{i}+[0,N']^{d}}=z^{\star}_{\vect{i}+[0,N']^{d}}$ or $\overline{z^{\star}}_{\vect{i}+[0,N']^{d}}$}&\textrm{ for all $\vect{i}\in\M$,}
\end{cases}
\textrm{ one has $z\in\Sigma'$.}\]
\end{itemize}

One says that $\Sigma$ verifies the conditions $\widetilde{\mathcal{H}}$ if moreover there exists a morphism $\psi:\Sigma'\to\Sigma'$ such that for all $z^{\ast}\in\Sigma'$ one can take $\overline{z^{\ast}}=\psi(z^{\ast})$.
\end{definition}

A subshift with the conditions $\mathcal{H}$ or $\widetilde{\mathcal{H}}$ has positive entropy and the conditions $\mathcal{H}$ or $\widetilde{\mathcal{H}}$ are stable by conjugacy. The positions of the differences between $z^{\star}$ and $\overline{z^{\star}}$ are illustrated in Figure~\ref{figure.HProperty}.  

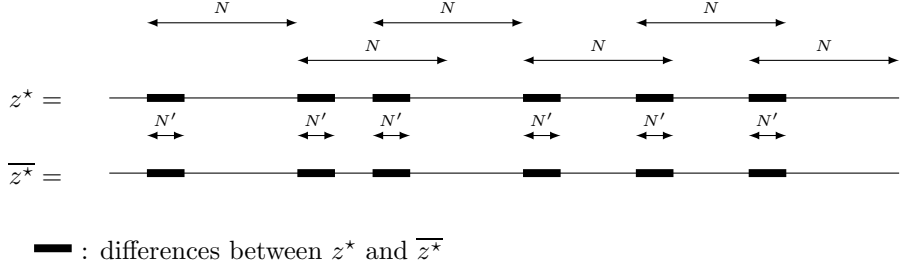
\begin{figure}[h!]
 \begin{tikzpicture}
 \draw (0.5,0)--(11,0);
 \draw (0,0) node[left] {$z^{\star}=$};
 \draw (0.5,-1)--(11,-1);
 \draw (0,-1) node[left] {$\overline{z^{\star}}=$};
 \foreach \x in {1,3,4,6,7.5,9}{
 \draw[line width=3pt] (\x,0)--(\x+0.5,0);
 \draw[line width=3pt] (\x,-1)--(\x+0.5,-1);
 \draw[<->,>=latex] (\x,-0.5)--(\x+0.5,-0.5);
 \draw (\x+0.25,-0.5) node[above]{\tiny{$N'$}};
 }
  \foreach \x in {1,4,7.5}{
 \draw [<->,>=latex] (\x,1)--(\x+2,1);
 \draw (\x+1,1) node[above]{\tiny{$N$}};

 }
  \foreach \x in {3,6,9}{
 \draw [<->,>=latex] (\x,0.5)--(\x+2,0.5);
  \draw (\x+1,0.5) node[above]{\tiny{$N$}};
 }
 
 \draw[line width=3pt] (-0.5,-2)--(0,-2);
 \draw (0,-2) node[right]{: differences between $z^{\star}$ and $\overline{z^{\star}}$};
 \end{tikzpicture}

 \caption{Illustration of the positions of the differences between $z^{\star}$ and $\overline{z^{\star}}$.}\label{figure.HProperty}
\end{figure}

\begin{example}
If $\Sigma\subset\A^{\Z}$ contains a sofic subshift of positive entropy then it verifies the condition $\widetilde{\mathcal{H}}$. 
\end{example}
\begin{example}
Let $\alpha\in\mathbb{R}$, define $S_{\alpha}\subset\{0,1\}^{\Z}$ be the subshift such that every pattern of a configuration is a pattern of the sturmian sequence of slope $\alpha$. 

Let $\pi':\{0,0',1\}^{\Z}\to\{0,1\}^{\Z}$ be a letter to letter morphism such that $\pi'(0)=0=\pi'(0')$ and $\pi'(1)=1$. Define $S'_{\alpha}=\{x\in\{0,0',1\}^{\Z}:\pi'(x)\in S_{\alpha}\}$. If $\alpha$ is computable then $S'_{\alpha}$ has the property $\widetilde{\mathcal{H}}$ whereas $S_{\alpha}$ has not.

Let $\pi'':\{0,0_1,0_2,1\}^{\Z}\to\{0,1\}^{\Z}$ be a letter to letter morphism such that $\pi''(0)=\pi''(0_1)=\pi''(0_2)=0$ and $\pi''(1)=1$. For $x\in\{0,0_1,0_2,1\}^{\Z}$, define $\psi(x)\in\{0,0_1,0_2\}^{\Z}$ obtained by removing letters $1$ in $x$. Define $S''_{\alpha,\beta}$ the set of element $x\in\{0,0_1,0_2,1\}^{\Z}$ such that $\pi''(x)\in S'_{\alpha}$ and there exists an element of $S_{\beta}$ obtained from $\psi(x)$ when we replace $0_1$ by $0, 0_2$ by $1$ and $0$ by $0$ or $1$. If $\alpha$ and $\beta$ are computable then $S''_{\alpha,\beta}$ has the property $\mathcal{H}$ but not the property $\widetilde{\mathcal{H}}$.

The same type of exemples can be constructed with uniform recurrent sequences. 
\end{example}

\begin{theorem}\label{theorem-realization}
Let $\Sigma\subset\A^{\Z^{d'}}$ be a subshift which verifies the conditon $\mathcal{H}$. Then there exists a finite set $F_{\texttt{Final}}$ of $d$-dimensional patterns such that $\sa{d'}{\T_{F_{\texttt{Final}}}}=\Sigma$, that is to say $\cF^{\sft}_{\Sigma,d}$. 

More precisely, if $\Sigma'\subset\Sigma$ is an effective subshift given by the $\mathcal{H}$ property. Assume there exists two $d$-dimensional SFT $\T_F\subset\B^{\Z^d}$ and $\T_{F'}\subset\B'^{\Z^d}$ and two factor maps $\pi:\B^{\Z^d}\to\A^{\Z^d}$ and $\pi':\B'^{\Z^d}\to\A^{\Z^d}$ such that $\Sigma=\sa{d'}{\pi(\T_F)}$ and $\Sigma'=\sa{d'}{\pi'(\T_{F'})}$. 
\begin{itemize}
\item If $x=\sigma^{e_l}(x)$ for all $x\in\pi'(\T_{F'})$ and $l\in[d'+1,d]$, then we can assume that $\varphi_{F_{\texttt{Final}},\id,d\to d'}\sim\max(\varphi_{F,\pi,d\to d'},\varphi_{F',\pi',d\to d'})$.
\item If $\Sigma'\subset\Sigma$ verifies the condition $\widetilde{\mathcal{H}}$, then we can assume that $\varphi_{F_{\texttt{Final}},\id,d\to d'}\sim\max(\varphi_{F,\pi,d\to d'},\varphi_{F',\pi',d\to d'})$.
\end{itemize}

\end{theorem}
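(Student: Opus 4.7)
\medskip

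\textbf{Proof plan.} My approach is to build $\T_{F_{\texttt{Final}}}$ as an SFT over the alphabet $\A$ whose configurations carry, on top of the visible $\A$-valued structure, a hidden copy of both sofic covers $\T_F$ (of $\Sigma$) and $\T_{F'}$ (of $\Sigma'$); the hidden information is encoded through the free bits provided by condition $\mathcal{H}$. Since the $\A$-valued letters at the positions $\M$ of a $\Sigma'$-configuration can be flipped independently between $z^{\star}$ and $\overline{z^{\star}}$, they act as a data channel inside $\Sigma'$, and this is what will replace the $\B$-alphabet of the sofic cover.

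First I would reduce technical parameters (the radii of $\pi$ and $\pi'$, the supports of the patterns in $F$ and $F'$, and the constants $N,N'$ of $\mathcal{H}$) to convenient normal forms via the recoding $\theta_r$, which only affects speeds up to the equivalence $\sim$. Next, I would stratify $\Z^d$ into $d'$-dimensional sheets parallel to the central one and impose SFT rules forcing the configuration on every sheet \emph{off} the central plane to be a configuration of $\Sigma'$, using the rules of $\T_{F'}$ applied through $\pi'$ (so that the free bits of $\mathcal{H}$ are available there). On each such $\Sigma'$-sheet, the letters at the positions $\M$ are used to encode, position by position, the letters of a background configuration $X\in\T_F\subset\B^{\Z^d}$. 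Two further families of local rules are then added: (i) the decoded $\B$-letters must form a legal $\T_F$-configuration, which is checked locally via $F$ since the decoding is itself local; (ii) the visible $\A$-letter on the central sheet must coincide with $\pi$ applied to a bounded vertical shift of the decoded $\T_F$-layer. Because the central sheet is not itself constrained to lie in $\Sigma'$, only the off-central ones, the central sheet is free to realize any element of $\Sigma$, giving $\sa{d'}{\T_{F_{\texttt{Final}}}}=\Sigma$.

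For the speed bound, I would argue that any forbidden $k$-pattern in $\Sigma$ is rejected by the local rules of $\T_{F_{\texttt{Final}}}$ either because the background cover $X$ fails to certify it (requiring strip-width $\varphi_{F,\pi,d\to d'}(k)$) or because the encoding $\Sigma'$-sheets fail to be consistent with $\T_{F'}$ (requiring strip-width $\varphi_{F',\pi',d\to d'}(k)$). Since both families of rules act simultaneously inside $\T_{F_{\texttt{Final}}}$, the maximum of the two speeds suffices, up to the additive/multiplicative slack of $\sim$, yielding the announced $\varphi_{F_{\texttt{Final}},\id,d\to d'}\sim\max(\varphi_{F,\pi,d\to d'},\varphi_{F',\pi',d\to d'})$.

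The principal obstacle is the \emph{decoding} of the free bits: the positions $\M$ depend on the reference $z^{\star}\in\Sigma'$, which is not known locally. Under the first hypothesis of the theorem, $\pi'(\T_{F'})$ is invariant under $\sigma^{\vect{e_l}}$ for all $l>d'$, so the $\Sigma'$-data can be taken identical across every off-central sheet and a single canonical copy suffices to locate $\M$. Under the second hypothesis ($\widetilde{\mathcal{H}}$), the morphism $\psi$ gives an explicit local rule that produces $\overline{z^{\star}}$ from $z^{\star}$, so the positions of $\M$ are recognized by a finite-window comparison. Both hypotheses are precisely what makes the encoding/decoding realizable by \emph{finitely many} local rules with the sharp quantitative speed bound; without them the same construction still yields an SFT realization (this is the weaker conclusion $\cF^{\sft}_{\Sigma,d\to d'}\neq\emptyset$), but one loses control over the speed.
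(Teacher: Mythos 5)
Your overall strategy is the same as the paper's: use the free bits supplied by condition $\mathcal{H}$ (the independent choices between $z^{\star}$ and $\overline{z^{\star}}$ on the blocks indexed by $\M$) as a data channel to hide copies of the covers $\T_F$ and $\T_{F'}$ inside configurations whose transverse sheets lie in $\Sigma'\subset\Sigma$, keeping one sheet free to carry an arbitrary element of $\Sigma$; and your diagnosis of the decoding obstacle (locating $\M$ locally) and of the role of the two hypotheses matches the paper's Claim~1 exactly.

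There is, however, one step that fails as literally written: you cannot ``impose SFT rules forcing the configuration on every sheet \emph{off} the central plane to be a configuration of $\Sigma'$'' while leaving ``the central sheet'' unconstrained, because the forbidden patterns of an SFT are translation-invariant and cannot single out the coset $\G_{d'}$ among its translates. The paper resolves this by making the transverse structure periodic: the space is cut into $19k$ slices along $\vect{e_{d'+1}}$, with a block of $k$ consecutive slices carrying the arbitrary $\Sigma$-configuration, blocks carrying reference copies of $z^{\star}$ and of $\overline{z^{\star}}$, and blocks of coding slices; the phase $j$ of this layering is locally enforced and shown to be globally unique (the condition $F_{\texttt{Syncro}}$ and Claim~2). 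Every sheet of a valid configuration then lies in $\Sigma$, and surjectivity is obtained by choosing the phase so that a free slice sits at the origin. Two further details you gloss over are forced by this same structure: each position of $\M$ contributes only one bit per sheet, so encoding a letter of $\B^{[0,N]^{d'}}$ requires a stack of $k=\lceil\log_2(\card(\B^{N+1})^{d'})\rceil$ consecutive coding slices (and three such stacks, since the preimages of $z^{\star}$ and $\overline{z^{\star}}$ in $\T_{F'}$ must be encoded alongside the preimage of the target configuration in $\T_F$); and decoding a bit requires comparing a coding slice against \emph{both} reference copies $z^{\star}$ and $\overline{z^{\star}}$, which is why the paper carries both explicitly. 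Finally, your closing remark that the construction ``without the hypotheses'' still gives an SFT realization is not how the paper argues the first assertion: rather, Theorem~\ref{Th:ProjectiveSubactionSofic} shows the first bullet's hypothesis (transverse invariance of $\pi'(\T_{F'})$) can always be arranged for an effective $\Sigma'$, and the general statement follows from that case; without one of the two hypotheses the synchronized lifting of the pair $(z^{\star},\overline{z^{\star}})$ to the cover $\T_{F'}$ is not available.
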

\begin{remark}
Since $\Sigma'$ is an effective subshift, by Theorem\ref{Th:ProjectiveSubactionSofic}, it is always possible to find a multidimensional SFT which verifies the first point.
\end{remark}
\begin{proof}
Denote $d''=d-d'$, $\G_{d'}=\Vect{\vect{e_1},\dots,\vect{e_{d'}}}$ and $\G_{d''}=\Vect{\vect{e_{d'+1}},\dots,\vect{e_{d''}}}$. 

\Claim{It is possible to assume without change the speed of convergence that for all $x\in\T_{F'}$ there exists $\overline{x}\in\T_{F'}$ such that for $\vect{j}\in\G_{d''}$ one has \[\M_{\vect{j}}=\left\{\vect{i}\in\G_{d'}:\pi'(\sigma^{\vect{j}}(x))_{\vect{i}}\ne \pi'(\sigma^{\vect{j}}(\overline{x}))_{\vect{i}}\textrm{ and } :\pi'(\sigma^{\vect{j}}(x))_{\vect{i}-\vect{e_l}}\ne \pi'(\sigma^{\vect{j}}(\overline{x}))_{\vect{i}-\vect{e_l}} ,\textrm{ for all } l\in[1,d']\right\}\]
which verifies $\M_{\vect{j}}+[0,N]^{d'}=\G_{d'}$.}\label{Claim.syncro}
\bclaimprf
This claim allows to produces two synchronized configurations of $\T_{F'}$ which allow to code an alphabet. 

If we are in the first point of the Theorem, the result is trivial since $x_{\G_{d'}}=x_{\vect{i}+\G_{d'}}$ for all $\vect{i}\in\G_{d''}$ and using the definition of $\Sigma'$.

In the case of the second point, it is sufficient to consider the subshift $T_{F'}\times\{0,1\}^{\Z^d}$ and the projection $\pi':T_{F'}\times\{0,1\}^{\Z^d}\to\pi(T_{F'})$ defined by $\pi'(x,y)_i=\pi'(x)_i$ if $y_i=0$ and $\pi'(x,y)_i=\psi\circ\pi'(x)_i$ if $y_i=1$ for all $(x,y)\in T_{F'}\times\{0,1\}^{\Z^d}$.
\eclaimprf

\vspace{0.4cm}

Has left to increase the alphabet $\B$, one can assume that $T_F$ and $T_{F'}$ are defined on the same alphabet $\B$. Let $k=\lceil\log_2(\card(\B^{N+1})^{d'})\rceil$ and consider the surjection $\phi:\{0,1\}^{k}\longrightarrow\B^{[0,N]^{d'}}$. For $j,l\in\N$ define:
\[\begin{array}{rlll}
\trans{j,l}:&\Z^d&\longrightarrow&\Z^d\\
&\vect{n}&\longmapsto& \vect{n'}+(j+19\times k\times n+l)\vect{e_{d'+1}}+\vect{n''}\\
&&&\textrm{where }\vect{n}=\vect{n'}+n\vect{e_{d'+1}}+\vect{n''}\textrm{ with }\vect{n'}\in\G_{d'},n\in\Z \textrm{ and }\vect{n''}\in\Vect{\vect{e_{d'+2}},\dots,\vect{e_{d}}}
\end{array}\]

We are going to construct a finite set of $d$-dimensional forbidden patterns on the alphabet $\A$ denoted $F_{\texttt{Final}}=F_{\texttt{Syncro}}\cup F_{\texttt{Code}}\cup F_{\texttt{Compat}}\cup F_{\texttt{Simul}}\cup F_{\texttt{Fact}}$ such that $\sa{d'}{\T_{F_{\texttt{Final}}}}=\Sigma$.

\textbf{Definition of $F_{\texttt{Syncro}}$:} Consider the finite set of patterns $F_{\texttt{Syncro}}$ of $d$-dimensional forbidden patterns on the alphabet $\A$ such that  for all $x\in\T_{F_{\texttt{Syncro}}}$, for all $\vect{n}=\vect{n'}+\vect{n''}\in\Z^d$ with $\vect{n'}\in\G_{d'}$ and $\vect{n''}\in\G_{d''}$ there exists $j\in[0,19k-1]$ such that:
\begin{enumerate}
\item $x_{\trans{j,0}(\vect{n})}=x_{\trans{j,l}(\vect{n})}$ for all $l\in[0,k-1]$;
\item $x_{\trans{j,k}(\vect{n})}=x_{\trans{j,l}(\vect{n})}=x_{\trans{j,l}(\vect{n}+\vect{e_{\ell'}})}$ for all $l\in[k,6k-1]\cup[11k,16k-1]$ and $\ell'\in[d'+1,d]$;
\item $x_{\trans{j,6k}(\vect{n})}=x_{\trans{j,l}(\vect{n})}=x_{\trans{j,l}(\vect{n}+\vect{e_{\ell'}})}$ for all $l\in[6k,11k-1]$ and $\ell'\in[d'+1,d]$;
\item there exists $\vect{i}\in\G_{d'}$ such that $\vect{n'}\in\vect{i}+[0,N]^{d'}$, $x_{\trans{j,k}(\vect{n})}\ne x_{\trans{j,6k}(\vect{n})}$ and $x_{\trans{j,k}(\vect{n}-\vect{e_l})}= x_{\trans{j,6k}(\vect{n}-\vect{e_l})}$ for all $l\in[1,d']$.
\end{enumerate}

The next claim shows that the choose of $j$ does not depend of the position $\vect{n}$ of the pattern considered but only of the configuration $x\in\T_{F_{\texttt{Syncro}}}$.

\Claim{For all $x\in\T_{F_{\texttt{Syncro}}}$, there exists an unique $j\in[0,19k-1]$ which verifies the previous properties independently of the choose of $\vect{n}$, we call it the \define{synchronization index of the configuration $x$}.}\label{Syncro}
\bclaimprf
Let $x\in\T_{F_{\texttt{Syncro}}}$. Consider $j_1\in[0,19k-1]$ (respectively $j_2\in[0,19k-1]$) which verifies the previous properties for $\vect{n_1}\in\Z^d$ (respectively $\vect{n_2}\in\Z^d$). There exists $\vect{n}\in\Z^d$ and $l\in[0,19k-1]$ such that $\trans{j_2,0}(\vect{n_2})=\trans{j_1,0}(\vect{n_1}+\vect{n})+l\vect{e_{d+1}}$.

By periodicity conditions given by (2) and (3), $x_{\trans{j,l'}(\vect{n_1}+\vect{n})+\G_{d'}}$ is the same pattern for $l'\in [k,6k-1]\cup[11k,16k-1]$ and  is different for $l'\in[6k,11k-1]$ and we cannot say for $l'\in [0,k-1]\cup[16k,19k-1]$. The same properties hold for $x_{\trans{j,l'}(\vect{n_2})+\G_{d'}}$. One deduces that $l=0$ since if not there is a superposition which does not hold. That is to say $j_1=j_2$ 
\eclaimprf

Thus the local rules $F_{\texttt{Syncro}}$ divide the space in $19k$ slices according the direction $\vect{e_{d'+1}}$. Figure~\ref{figure.SlicesSFT} represent the different slices. For $x\in\T_{F_{\texttt{Syncro}}}$, referring to the position index associated $j\in[0,19k-1]$, $F_{\texttt{Syncro}}$ fixes the following things for $\vect{n}\in\Z^d$:
\begin{itemize}
\item condition $(1)$ fixes the same configuration in the slices $\trans{j,l}(\vect{n})+\G_{d'}$ with $l\in[0,k-1]$, in these slices will appear a configuration of $\pi(\T_F)$;
\item condition $(2)$ fixes the same configuration in the slices $\trans{j,l}(\vect{n})+\G_{d'}$ for all $l\in[k,6k-1]\cup[11k,16k-1]$ and condition $(3)$ fixes the same configuration in the slices $\trans{j,l}(\vect{n})+\G_{d'}$ for all $l\in[6k,11k-1]$, in these slices will appear the configurations $z^{\ast}$ and $\overline{z^{\ast}}$ and the differences are imposed by condition $(4)$;
\item the repetition of the different lines allows to synchronize the configuration in view to obtain the unicity of   $j\in[0,19k-1]$ as it is shown in Claim~\ref{Syncro}. 
\end{itemize}
Let $x\in\T_{F_{\texttt{Syncro}}}$, $j\in[0,19k-1]$ the synchronization index associated and $\vect{n''}\in\G_{d''}$. One defines:
\begin{align*}
z^{\star}(x,\vect{n''})&= x_{\G_{d'}+\trans{j,k}(\vect{n''})}\in\A^{\Z^{d'}};\\
\overline{z^{\star}}(x,\vect{n''})&=x_{\G_{d'}+\trans{j,6k}(\vect{n''})}\in\A^{\Z^{d'}};\\
 \M(x,\vect{n''})&=\left\{\vect{i}\in\G_{d'} : x_{\trans{j,k}(\vect{i}+\vect{n''})}\ne x_{\trans{j,6k}(\vect{i}+\vect{n''})}\textrm{ and } x_{\trans{j,k}(\vect{i}-\vect{e_l}+\vect{n''})}= x_{\trans{j,6k}(\vect{i}-\vect{e_l}+\vect{n''})},\ \forall l\in[1,d']\right\}.
\end{align*}

The configurations $z^{\star}_{\vect{n''}}(x)$ and $\overline{z^{\star}_{\vect{n''}}}(x)$ give the base for coding the alphabet $\B$ and the set $\M_{\vect{n''}}(x)$ gives the position where the coding is done. 

\textbf{Definition of $F_{\texttt{Code}}$:} We are going to define the condition $F_{\texttt{Code}}$ which allows to code the alphabet $\B$ and the the conditions verified by the subshifts $\T_F$ and $\T_{F'}$. For $x\in\T_{F_{\texttt{Syncro}}}$ with synchronization index $j\in[0,19k-1]$ and $\vect{n}=\vect{n'}+\vect{n''}\in\Z^d$ with $\vect{n'}\in\G_{d'}$ and $\vect{n''}\in\G_{d''}$, the forbidden patterns $F_{\texttt{Code}}$ verifies
\begin{enumerate}
\item if $\vect{n'}\in\M(x,\vect{n''})$ then for all $l\in[16,19k-1]$ one has $x_{\trans{j,l}(\vect{n'})+[0,N']^{d'}}=x_{\trans{k,l}(\vect{n'})+[0,N']^{d'}}=z^{\ast}(x,\vect{n''})_{\vect{n'}+[0,N']^{d'}}$ or $x_{\trans{6k,l}(\vect{n'})+[0,N']^{d'}}=\overline{z^{\ast}}(x,\vect{n''})_{\vect{n'}+[0,N']^{d'}}$;
\item if $\vect{n'}\notin\M(x,\vect{n''})+[0,N']^{d'}$ then $x_{\trans{j,l}(\vect{n})}=x_{\trans{j,k}(\vect{n})}$ for all $l\in[k,19k-1]$.
\end{enumerate}

The local rule $F_{\texttt{Code}}$ allows to use the difference between $z^{\ast}(x,\vect{n''})$ and $\overline{z^{\ast}}(x,\vect{n''})$ to code binary $\B^{[0,N]}$ in slices $x_{\trans{j,l}(\vect{n''})+\G_{d'}}$ with $l\in[16,19k-1]$. More precisely, 
\begin{itemize}
\item the slices $x_{\trans{j,l}(\vect{n''})+\G_{d'}}$ with $l\in[16,17k-1]$ codes the configuration contained in  $x_{\trans{j,0}(\vect{n''})+\G_{d'}}=x_{\trans{j,l'}(\vect{n''})+\G_{d'}}$ for all $l'\in[0,k-1]$;
\item the slices $x_{\trans{j,l}(\vect{n''})+\G_{d'}}$ with $l\in[17,18k-1]$ codes the configuration contained in  $x_{\trans{k,l}(\vect{n''})+\G_{d'}}=x_{\trans{j,l'}(\vect{n''})+\G_{d'}}$ for all $l\in[k,6k-1]\cup[11k,16k-1]$;
\item the slices $x_{\trans{j,l}(\vect{n''})+\G_{d'}}$ with $l\in[18,19k-1]$ codes the configuration contained in  $x_{\trans{6k,l}(\vect{n''})+\G_{d'}}=x_{\trans{j,l'}(\vect{n''})+\G_{d'}}$ for all $l\in[6k,11k-1]$.
\end{itemize}

Let $x\in\T_{F_{\texttt{Syncro}}\cup F_{\texttt{Code}}}$ and $j$ be the synchronization index associated. For $l\in[0,2]$, $\vect{i''}\in\G_{d''}$ and $\vect{i'}\in\M(x,\vect{i''})$ define 
\[
\overline{\phi_l}(x)_{\vect{i'}+\vect{i''}}=\phi(\alpha_0,...,\alpha_{k-1})\in\B^{[0,N]^{d'}} \textrm{ where } \alpha_p=\begin{cases}1 &\textrm{ if } x_{\trans{j,(16+l)k+p}(\vect{i'}+\vect{i''})}=z^{\star}(x,\vect{i''})_{\vect{i'}}\\
0 &\textrm{ if } x_{\trans{j,(16+l)k+p}(\vect{i'}+\vect{i''})}=\overline{z^{\star}}(x,\vect{i''})_{\vect{i'}}
\end{cases}\textrm{ for all }p\in[0,k-1].
\]

\textbf{Definition of $F_{\texttt{Compat}}$:} Consider the finite set of patterns $F_{\texttt{Compat}}$ such that for all $x\in\T_{F_\texttt{Syncro}\cup F_{\texttt{Code}} \cup F_\texttt{Compat}}$ and $\vect{i''}\in\G_{d''}$, if $\vect{i_1}+\vect{j_1}=\vect{i_2}+\vect{j_2}$ where $\vect{i_1},\vect{i_2}\in\M(x,\vect{i''})$ and $\vect{j_1},\vect{j_2}\in[0,N]^{d'}$ one has
\[ \left(\overline{\phi_l}(x)_{\vect{i_1}+\vect{i''}}\right)_\vect{j_1}=\left(\overline{\phi_l}(x)_{\vect{i_2}+\vect{i''}}\right)_\vect{j_2} \textrm{ for all }l\in[0,2].\]

Thus, for $x\in\T_{F_\texttt{Syncro}\cup F_{\texttt{Code}}\cup F_\texttt{Compat}}$ and $l\in[0,2]$ one can define the function 
\[
\begin{array}{rccll}
\widetilde{\phi_l}:&\T_{F_\texttt{Syncro}\cup F_{\texttt{Code}}\cup F_\texttt{Compat}}&\longrightarrow&\B^{\Z^d}&\\
&x&\longmapsto&\widetilde{\phi_l}(x)&\textrm{ where }\widetilde{\phi_l}(x)_{\vect{i}+\vect{j}+\vect{i''}}=\left(\overline{\phi_l}(x)_{\vect{i}+\vect{i''}}\right)_{\vect{j}} \textrm{ with }\vect{i''}\in\G_{d''},\vect{i}\in\M(x,\vect{i''}) \textrm{ and } \vect{j}\in[0,N]^{d'}.
\end{array}
\]

\textbf{Definition of $F_{\texttt{Simul}}$:} Then consider the finite set of patterns $F_{\texttt{Simul}}$ such that for all $x\in\T_{F_\texttt{Syncro}\cup F_\texttt{Code}\cup F_\texttt{Compat}\cup F_{\texttt{Simul}}}$ one has $\widetilde{\phi_0}(x)\in\T_{F}$ and $\widetilde{\phi_1}(x),\widetilde{\phi_2}(x)\in\T_{F'}$. 

\textbf{Definition of $F_{\texttt{Fact}}$:} To finish, define the finite set of patterns $F_{\texttt{Fact}}$ such that for all $x\in\T_{F_\texttt{Syncro} \cup F_\texttt{Code}\cup F_\texttt{Compat}\cup F_{\texttt{Simul}}\cup F_{\texttt{Fact}}}$, one has $\pi(\widetilde{\phi_0}(x))_\vect{n}=x_{\trans{j,0}(\vect{n})}$, $\pi'(\widetilde{\phi_1}(x))_\vect{n}=x_{\trans{j,k}(\vect{n})}$ and $\pi'(\widetilde{\phi_2}(x))_\vect{n}=x_{\trans{j,6k}(\vect{n})}$

\textbf{In brief:} The previous forbidden patterns involve the following properties on $x\in\T_{\widetilde{F}}$ and $\vect{n''}\in\G_{d''}$:
\begin{itemize}
\item $F_\texttt{Syncro}$ fixes the index synchronization $j$, fixes the same configuration $z^{\star}(x,\vect{n''})$ and $\overline{z^{\star}}(x,\vect{n''})$ which give a reference to code $\B^{[0,N]}$;
\item $F_\texttt{Code}$ verifies the correspondence between $z^{\star}(x,\vect{n''})$ and $\overline{z^{\star}}(x,\vect{n''})$ in the strips coding;
\item $F_\texttt{Compat}$ allows to verify the compatibility with redundant coding of the alphabet $\B$;
\item  $F_{\texttt{Simul}}$ verifies that the condition $F$ and $F'$ are checked to produce one element of $\T_F$ and two elements of $\T_{F'}$;
\item  $F_{\texttt{Fact}}$ verifies that the subshifts given by  $F_{\texttt{Simul}}$ produces the desired element after projection on the corresponding slice.
\end{itemize}

\begin{figure}[h!]
\begin{center}
\begin{tiny}
 \begin{tikzpicture}
\begin{scope}[xshift=-3cm,yshift=1cm,scale=0.6]
\draw[->, >=latex] (0,0)--(1,0);
\draw (0.5,0) node[below] {$\vect{e_1}$};
\draw[->, >=latex] (0,0)--(0,1);
\draw (0,0.5) node[left] {$\vect{e_{d'+1}}$};
\end{scope}

\begin{scope}[scale=0.45]

\foreach \i in {-1,0,1}{
\begin{scope}[yshift=9.5*\i cm]

\fill[blue!20] (0,0.5)rectangle(27,3);
\draw (14,1.75) node {$z^{\star}(x)$};
\fill[blue!20] (0,5.5)rectangle(27,8);
\draw (14,6.75) node {$z^{\star}(x)\in\Sigma'$};
\fill[yellow!20] (0,3)rectangle(27,5.5);
\draw (14,4.25) node {$\overline{z^{\star}}(x)\in\Sigma'$};

\fill[red!40] (0,0)rectangle(27,0.5);
\draw (14,0.25) node {$y_{\i}\in\Sigma$};

\fill[red!80] (0,8)rectangle(27,8.5);
\draw (14,8.25) node {coding of $y_{\i}$};
\fill[blue!80] (0,8.5)rectangle(27,9);
\draw (14,8.75) node {coding of $z^{\star}(x,\i)$};
\fill[yellow!80] (0,9)rectangle(27,9.5);
\draw (14,9.25) node {coding of $\overline{z^{\star}}(x,\i)$};
\end{scope}
}
\foreach \x in {9.5,0,-9.5}{
\begin{scope}[yshift=\x cm]
\draw[thick,dashed] (0,0)--(29,0);
\draw[<->] (27.5,0)--(27.5,0.5);
\draw[<->] (27.5,0.5)--(27.5,3);
\draw[<->] (27.5,3)--(27.5,5.5);
\draw[<->] (27.5,5.5)--(27.5,8);
\draw[<->] (27.5,8)--(27.5,8.5);
\draw[<->] (27.5,8.5)--(27.5,9);
\draw[<->] (27.5,9)--(27.5,9.5);

\draw (27.5,0.25) node[right] {$k$};
\draw (27.5,1.75) node[right] {$5k$};
\draw (27.5,4.25) node[right] {$5k$};
\draw (27.5,6.75) node[right] {$5k$};
\draw (27.5,8.25) node[right] {$k$};
\draw (27.5,8.75) node[right] {$k$};
\draw (27.5,9.25) node[right] {$k$};
\end{scope}}

\draw (0,0) node[left] {$j$};
\draw (0,9.5) node[left] {$j+19k$};
\draw (0,-9.5) node[left] {$j-19k$};

\foreach \x in {1,3,4,6,8,10,11,13,16,17,20,21,22,25,26}{
\draw[dotted] (\x,9.5)--(\x,19);
}
\foreach \x in {2,5,7,8,10,13,15,16,18,21,23,25}{
\draw[dotted] (\x,0)--(\x,9.5);
}
\foreach \x in {1,3,5,6,7,9,10,12,15,16,18,21,23,24,26}{
\draw[dotted] (\x,-9.5)--(\x,0);
}
\draw (1,-9.5) node[below] {$i_{-3}$};
\draw (3,-9.5) node[below] {$i_{-2}$};
\draw (5,-9.5) node[below] {$i_{-1}$};
\draw (6,-9.5) node[below] {$i_{0}$};
\draw (7,-9.5) node[below] {$i_{1}$};
\draw (9,-9.5) node[below] {$i_{2}$};
\draw (10,-9.5) node[below] {$i_{3}$};
\draw (12,-9.5) node[below] {$i_{4}$};
\draw (15,-9.5) node[below] {$i_{5}$};
\draw (16,-9.5) node[below] {$i_{6}$};
\draw (18,-9.5) node[below] {$i_{7}$};
\draw (21,-9.5) node[below] {$i_{8}$};
\draw (23,-9.5) node[below] {$i_{9}$};
\draw (24,-9.5) node[below] {$i_{10}$};
\draw (26,-9.5) node[below] {$i_{11}$};

\fill[pattern = north west lines] (1,18.5)rectangle(1.5,19);
\fill[pattern = north west lines] (1,12.5)rectangle(4,15);
\path[->] (1,18.75) edge[bend right=100] node[left] {code} (1,13.75);

\fill[pattern = north east lines] (1,18)rectangle(1.5,18.5);
\fill[pattern = north east lines] (1,10)rectangle(4,12.5);
\fill[pattern = north east lines] (1,15)rectangle(4,17.5);
\path[->] (1,18.25) edge[bend right=10] node[left] {} (1,11.25);
\path[->] (1,18.25) edge[bend right=10] node[left] {} (1,16.25);

\fill[pattern = crosshatch] (1,17.5)rectangle(1.5,18);
\fill[pattern = crosshatch] (1,9.5)rectangle(4,10);
\path[->] (1,17.75) edge[bend right] node[left] {code} (1,9.75);

\end{scope}

\end{tikzpicture}
\end{tiny}
\end{center}
\caption{Part of $x\in\T_{\widetilde{F}}$ corresponding to the different slices which appear in $[i_{-4},i_{12}]\vect{e_1}\times[j,j+21k]\vect{e_{d'+1}}$. The dotted vertical lines represent differences which allow to code.  }\label{figure.SlicesSFT}
\end{figure}
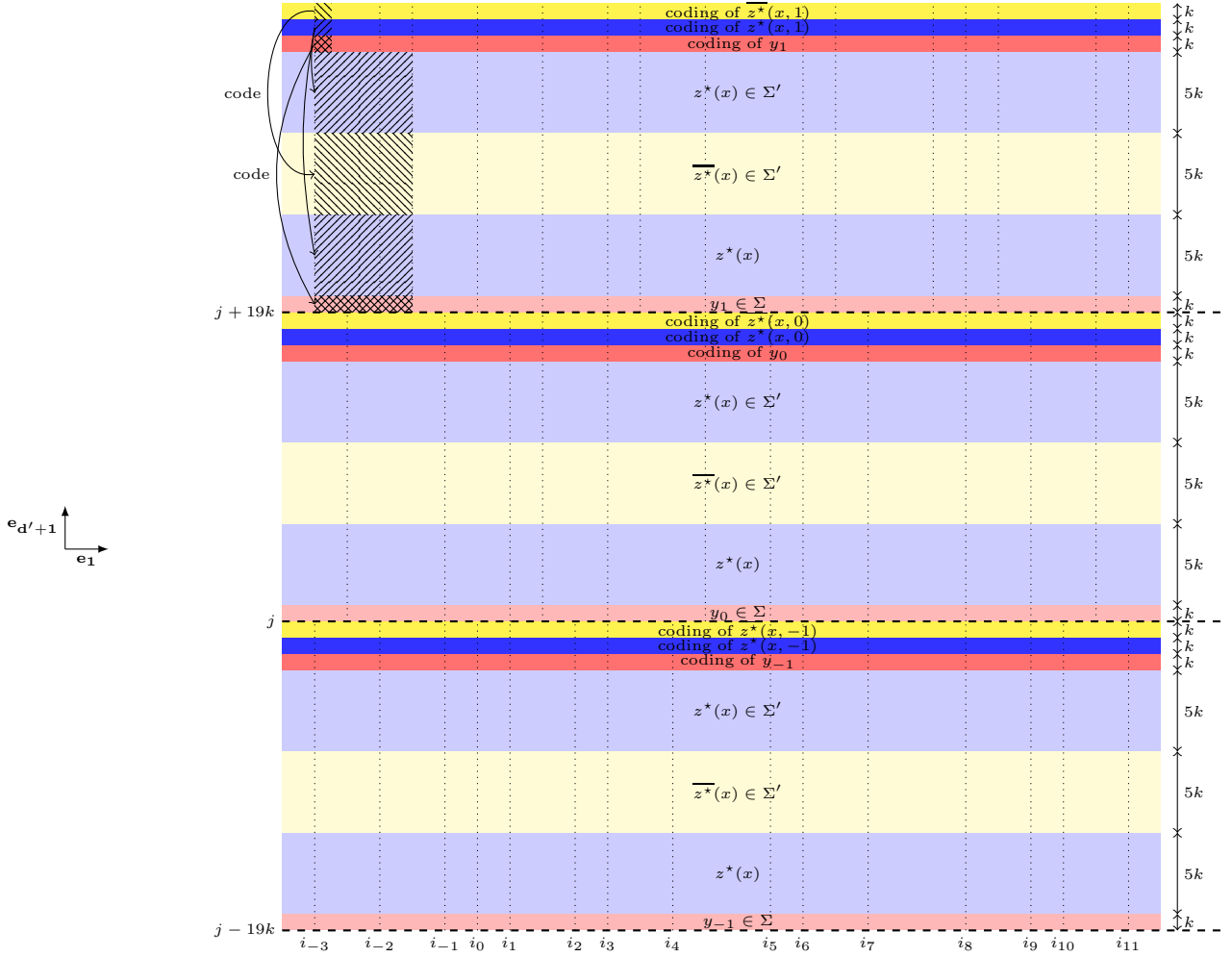

\Claim{If $x\in\T_{F_{\texttt{Final}}}$, then $\sa{d'}{x}\in\Sigma$.}
\bclaimprf
Let $x\in\T_{F_{\texttt{Final}}}$ and $j$ the synchronization index associated. One has:
\begin{itemize}
\item since $\widetilde{\phi_0}(x)\in\T_{F}$ and $\pi(\widetilde{\phi_0}(x))_{\vect{n''}+\G_{d'}}=x_{\trans{j,0}(\vect{n''})+\G_{d'}}$ for all $\vect{n''}\in\G_{d''}$, one deduces that $x_{\trans{j,0}(\vect{n''})+\G_{d'}}\in\Sigma$, thus  by  $F_\texttt{Syncro}$ one has $x_{\trans{j,l}(\vect{n''})+\G_{d'}}\in\Sigma$  for all $l\in[0,k-1]$;
\item since $\widetilde{\phi_1}(x)\in\T_{F'}$ and $\pi'(\widetilde{\phi_1}(x))_{\vect{n''}+\G_{d'}}=x_{\trans{j,k}(\vect{n''})+\G_{d'}}$ for all $\vect{n''}\in\G_{d''}$, one deduces that $x_{\trans{j,k}(\vect{n''})+\G_{d'}}=z^{\star}(x,\vect{n''})\in\Sigma'$, thus by $F_\texttt{Syncro}$ one has $x_{\trans{j,l}(\vect{n''})+\G_{d'}}\in\Sigma'$  for all $l\in[k,6k-1]\cup[11k,16k-1]$;

\item in the same way, $\widetilde{\phi_2}(x)\in\T_{F'}$ and $\pi'(\widetilde{\phi_2}(x))_{\vect{n''}+\G_{d'}}=x_{\trans{j,6k}(\vect{n''})+\G_{d'}}$ for all $\vect{n''}\in\G_{d''}$, one deduces that $x_{\trans{j,1}(\vect{n''})+\G_{d'}}=\overline{z^{\star}}(x,\vect{n''})\in\Sigma'$, thus by $F_\texttt{Syncro}$ one has $x_{\trans{j,l}(\vect{n})+\G_{d'}}\in\Sigma'$  for all $l\in[6k,11k-1]$;

\item by  $F_\texttt{Code}$, for all $l\in[16k,19k-1]$ and for all $\vect{n''}\in\G_{d''}$, if $\vect{i}\in\M(x,\vect{n''})$ one has $x_{\vect{i}+\trans{j,l}(\vect{n''})+[0,N']^{d'}}=x_{\vect{i}+\trans{j,k}(\vect{n''})+[0,N']^{d'}}$ or $x_{\vect{i}+\trans{j,6k}(\vect{n''})+[0,N']^{d'}}$ and if $\vect{i}\in\G_{d'}\setminus(\M(x,\vect{n''})+[0,N']^{d'})$ one has $x_{\vect{i}+\trans{j,l}(\vect{n''})}=x_{\vect{i}+\trans{j,k}(\vect{n''})}$; by the definition of $\Sigma'$ given by the properties $\mathcal{H}$, one deduces that $x_{\trans{j,l}(\vect{n''})+\G_{d'}}\in\Sigma'\subset\Sigma$ for all $l\in[16k,19k-1]$. 
\end{itemize}
Thus $x_{\vect{n''}+\G_{d'}}\in\Sigma$ for all $\vect{n''}\in\G_{d''}$.
\eclaimprf

\Claim{For all $y\in\Sigma\subset\A^{\Z^{d'}}$, there exists $x\in\T_{F_{\texttt{Final}}}\subset\A^{\Z^d}$ such that $y=\sa{d'}{x}\in\Sigma$.}
\bclaimprf
Consider $x^1\in\T_F\subset\B^{\Z^d}$ such that $\sa{d'}{\pi(x^1)}=y$
By the property $\mathcal{H}$ and Claim~1, there exists $x^2,\overline{x^2}\in\T_{F'}$ such that for all $\vect{j}\in\G_{d''}$ one has \[\M_{\vect{j}}=\left\{\vect{i}\in\G_{d'}:\pi'(\sigma^{\vect{j}}(x^2))_{\vect{i}}\ne \pi'(\sigma^{\vect{j}}(\overline{x^2}))_{\vect{i}}\textrm{ and } :\pi'(\sigma^{\vect{j}}(x^2))_{\vect{i}-\vect{e_l}}\ne \pi'(\sigma^{\vect{j}}(\overline{x^2}))_{\vect{i}-\vect{e_l}} ,\textrm{ for all } l\in[1,d']\right\}\]
which verifies $\M_{\vect{j}}+[0,N]^{d'}=\G_{d'}$.

Define $x\in\A^{\Z^d}$ such that:
\begin{itemize}
\item $x_{\G_{d'}+\trans{0,l}(\vect{n''})}=\pi(x^1)_{\vect{n''}+\G_{d'}}$ for all $\vect{n''}\in\G_{d''}$ and $l\in[0,k-1]$;
\item $x_{\G_{d'}+\trans{0,l}(\vect{n''})}=x^2_{\vect{n''}+\G_{d'}}$ for all $\vect{n''}\in\G_{d''}$ and $l\in[k,6k-1]\cup[11k,16k-1]$;
\item $x_{\G_{d'}+\trans{0,l}(\vect{n''})}=\overline{x^2}_{\vect{n''}+\G_{d'}}$ for all $\vect{n''}\in\G_{d''}$ and $l\in[6k,11k-1]$;
\item if $\phi(\alpha_0,\dots,\alpha_{k-1})=x^1_{\vect{i}+\vect{n''}+[0,N]^{d''}}$ with $\vect{i}\in\M_{\vect{n''}}$ and $\vect{n''}\in\G_{d''}$ then 
for $l\in[0,k-1]$ one has $x_{\vect{i}+\trans{0,16k+l}(\vect{n''})+[0,N]^{d'}}=x^2_{\vect{i}+[0,N]^{d'}}$ if $\alpha_l=1$ and $x_{\vect{i}+\trans{0,15k+l}(\vect{n''})+[0,N]^{d'}}=\overline{x^{2}}_{\vect{i}+[0,N]^{d'}}$ if not. 
\item if $\phi(\alpha_0,\dots,\alpha_{k-1})=x^2_{\vect{i}+\vect{n''}+[0,N]^{d''}}$ with $\vect{i}\in\M_{\vect{n''}}$ and $\vect{n''}\in\G_{d''}$ then 
for $l\in[0,k-1]$ one has $x_{\vect{i}+\trans{0,17k+l}(\vect{n''})+[0,N]^{d'}}=x^{2}_{\vect{i}+[0,N]^{d'}}$ if $\alpha_l=1$ and $x_{\vect{i}+\trans{0,15k+l}(\vect{n''})+[0,N]^{d'}}=\overline{x^2}_{\vect{i}+[0,N]^{d'}}$ if not. 
\item if $\phi(\alpha_0,\dots,\alpha_{k-1})=\overline{x^2}_{\vect{i}+\vect{n''}+[0,N]^{d''}}$ with $\vect{i}\in\M_{\vect{n''}}$ and $\vect{n''}\in\G_{d''}$ then 
for $l\in[0,k-1]$ one has $x_{\vect{i}+\trans{0,18k+l}(\vect{n''})+[0,N]^{d'}}=x^2_{\vect{i}+[0,N]^{d'}}$ if $\alpha_l=1$ and $x_{\vect{i}+\trans{0,15k+l}(\vect{n''})+[0,N]^{d'}}=\overline{x^{2}}_{\vect{i}+[0,N]^{d'}}$ if not. 
\end{itemize}

It is easy to verify that $x\in\T_{F_{\texttt{Final}}}$ and $y=\sa{d'}{x}\in\Sigma$, moreover the speed of convergence is the worst between $\varphi_{F,\pi,d\to d'}$ and $\varphi_{F',\pi',d\to d'}$.

\eclaimprf
\end{proof}

In the majority of the examples of Section~\ref{section:example}, $\Sigma'$ is a transitive SFT so it could be realized with constant speed of convergence. In a larger sense one has the following corollary. 
\begin{corollary}
For all $d'$-multidimensional subshift $\Sigma$ which verifies the condition $\widetilde{\mathcal{H}}$ where the sharp speed of convergence of $\Sigma'$ is better than $\Sigma$ then $\cF^{\sft}_{\Sigma,d\to d'}\sim\cF^{\sofic}_{\Sigma,d\to d'}$ for all $d\geq d'+1$.
\end{corollary}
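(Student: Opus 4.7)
The inclusion $\cF^{\sft}_{\Sigma,d\to d'}\subset \cF^{\sofic}_{\Sigma,d\to d'}$ is immediate (take $\pi=\id$), so all the work is in showing that every sofic realization speed of $\Sigma$ is $\sim$-equivalent to an SFT realization speed. The strategy is to feed a fast realization of $\Sigma'$ and the given sofic realization of $\Sigma$ into the second bullet of Theorem~\ref{theorem-realization} and check that the resulting $\max$ collapses.

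Concretely, fix $\varphi\in\cF^{\sofic}_{\Sigma,d\to d'}$ obtained from some cover $(F,\pi)$ with $\sa{d'}{\pi(\T_F)}=\Sigma$ and $\varphi\sim\varphi_{F,\pi,d\to d'}$. Since $\Sigma$ satisfies $\widetilde{\mathcal{H}}$, the subshift $\Sigma'\subset\Sigma$ supplied by that property is effective, hence by Theorem~\ref{Th:ProjectiveSubactionSofic} it admits a sofic realization in any dimension $d\geq d'+1$. The hypothesis that the sharp speed of convergence of $\Sigma'$ is better than that of $\Sigma$ lets me pick a cover $(F',\pi')$ with $\sa{d'}{\pi'(\T_{F'})}=\Sigma'$ whose speed satisfies $\varphi_{F',\pi',d\to d'}\prec\varphi_{F,\pi,d\to d'}$.

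Now apply the second bullet of Theorem~\ref{theorem-realization} (which is applicable because we have $\widetilde{\mathcal{H}}$, not merely $\mathcal{H}$) to the pair $(F,\pi)$, $(F',\pi')$. This produces a finite set $F_{\texttt{Final}}$ with $\sa{d'}{\T_{F_{\texttt{Final}}}}=\Sigma$ and
\[
\varphi_{F_{\texttt{Final}},\id,d\to d'}\;\sim\;\max\bigl(\varphi_{F,\pi,d\to d'},\,\varphi_{F',\pi',d\to d'}\bigr)\;\sim\;\varphi_{F,\pi,d\to d'}\;\sim\;\varphi,
\]
where the middle $\sim$ uses that $\varphi_{F',\pi',d\to d'}\prec\varphi_{F,\pi,d\to d'}$, so the maximum is $\sim$-equivalent to the larger of the two. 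Thus $\varphi$ is $\sim$-equivalent to an element of $\cF^{\sft}_{\Sigma,d\to d'}$, giving the reverse containment modulo $\sim$ and completing the proof.

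The only genuine subtlety is the reading of the phrase \emph{sharp speed of $\Sigma'$ better than $\Sigma$}: I interpret it as saying that for every realization speed of $\Sigma$ there exists a sofic realization of $\Sigma'$ strictly $\prec$-below it, which is exactly the ingredient needed to absorb the $\max$ in Theorem~\ref{theorem-realization}. Since $\Sigma'$ is effective the companion cover $(F',\pi')$ always exists by Theorem~\ref{Th:ProjectiveSubactionSofic}, so no extra assumption on the dimension $d\geq d'+1$ is required.
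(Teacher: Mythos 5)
Your proposal is correct and matches the paper's (implicit) argument: the paper states this corollary without proof as an immediate consequence of the second bullet of Theorem~\ref{theorem-realization}, and your derivation — feeding the given sofic cover of $\Sigma$ together with a faster cover of $\Sigma'$ into that theorem so that the $\max$ collapses onto $\varphi_{F,\pi,d\to d'}$ — is exactly the intended route. Your reading of ``sharp speed of $\Sigma'$ better than $\Sigma$'' as guaranteeing a companion cover $(F',\pi')$ with $\varphi_{F',\pi',d\to d'}\prec\varphi_{F,\pi,d\to d'}$ is the right one.
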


\begin{example}
Consider the sequence of patterns such that $u_0=1$ and $u_{n+1}=u_n^{\alpha_n}0^{n+1}u_n^{\alpha_n}$ with $\alpha_n=10^{2^n}$ for all $n\in\N$. Consider the subshift $\Sigma=\left\{x\in\{0,1\}^{\Z} : \forall u\sqsubset x, \exists n\in\N \textrm{ such that }u\sqsubset u_n\right\}$ and the one-to-one block map $\pi:\{0,1,1'\}^{\Z}\to\{0,1\}^{\Z}$ such that $\pi(1)=1=\pi(1')$ and $\pi(0)=0$. Define $\Sigma'= \left\{x\in\{0,1,1'\}^{\Z} : \exists y\in\Sigma \textrm{ such that }\pi(x)=y\right\}$ and consider $\Sigma''$ an effective subshift which cannot be realized by projective subaction with minimal projective subaction (such subshifts exists~\cite{Pavlov-Schraudner-2010}).
Clearly $\Sigma'$ is an effective subshift which does not verify the condition $\mathcal{H}$. We are going to see that $\Sigma'\times\Sigma''$ has positive entropy but cannot be realized by projective subaction of a SFT.

\Claim{$\Sigma'$ has positive entropy.}
\bclaimprf
Denote $|u|_1$ the number of $1$ in a word $u\in\{0,1\}^{\ast}$. One has $|u_{n}|_1=2\alpha_n|u_{n-1}|_1=2^n\prod_{i=1}^n\alpha_i$ and $|u_{n}|=2\alpha_nu_{n-1}+n\leq2(\alpha_n+1)u_{n-1}=2^n\prod_{i=1}^n(\alpha_i+1)$.
syndetic
One has $\card(\Lang_{|u_n|}(\Sigma'))\geq 2^{|u_n|_1}$ thus $$h(\Sigma')\geq\lim_{n\to\infty} \frac{|u_n|_1}{|u_n|}\geq\lim_{n\to\infty} \frac{\prod_{i=1}^n\alpha_i}{\prod_{i=1}^n(\alpha_i+1)}>\frac{1}{2}>0.$$
\eclaimprf

\Claim{Let $\T$ be a $2$-dimensional subshift such that $\sa{1}{\T}=\Sigma'$, then for all $x\in\T$, for all $r\in\N$ and for $n<m$, the set $\M^r_{[n,m]}(x)=\left\{i\in\Z: x_{[i,i+r]\times [n,m]}=0^{[0,r]\times[n,m]}\right\}$ is $\alpha'_{r,m-n}$-syndetic where $\alpha'_{r,0}=\alpha_{2r}$ and $\alpha'_{r,n+1}=\alpha'_{r,n}+\alpha_{2(\alpha'_{r,n}+r)}+r$. We recall that a $\alpha$-syndetic set is a subset of integer such that the sizes of the gaps in the sequence are bounded by $\alpha$.}
\bclaimprf
Let $x\in\T$ and $r\in\N$, the set $\M^r_{\{0\}}(x)$ is $\alpha_{2r}$-syndetic since every sub-pattern of $\pi(x)$ is a sub-pattern of $u_n$ for some $n\in\N$ and by induction, for $n\geq r$ two consecutive sequences of $0^r$ in $u_n$ is separated at most by $2\alpha_r|u_{r-1}|\leq|u_r|\leq2^n\prod_{i=1}^n(\alpha_i+1)\leq \alpha_{2r}$ symbols. 

Assume that the set $\M^r_{[0,n]}(x)$ is $\alpha'_{r,n}$-syndetic. If there exists $i\in\Z$ such that $x_{[i,i+\alpha'_{r,n}+r-1]\times\{n+1\}}=0^{\alpha'_{r,n}+r}$ then there exists $j\in[i,i+\alpha'_{r,n}]$ such that $x_{[j,j+r]\times[0,n+1]}=0^{[0,r]\times[0,n+1]}$. Since $\M^{\alpha'_{r,n}+r}_{\{n+1\}}(x)$ is $\alpha_{2(\alpha'_{r,n}+r)}$-syndetic, one deduces that  $\M^r_{[0,n+1]}(x)$ is $\alpha_{2(\alpha'_{r,n}+r)}+\alpha'_{r,n}+r$-syndetic. The Claim results by induction.

\eclaimprf

\Claim{$\Sigma'\times\Sigma''$ cannot be realized by projective subaction of a SFT.}
\bclaimprf
Assume that $\Sigma'\times\Sigma''$ is realized by projective subaction of a $2$-dimensionnal SFT denoted $\T$ and consider $\T''$ the projection according the second coordinate. Since $\Sigma'$ has arbitrary large zone with only 0 by the previous Claim, one deduces that the local rules which define $\T$ can be used to define $\{\textbf{0}\}\times\T''\subset\T$ where $\textbf{0}$ is the configuration with only $0$. The projective subaction of  $\{\textbf{0}\}\times\T''$ is include in $\{\textbf{0}\}\times\Sigma''$ so by minimality there is equality. One deduces that $\Sigma''$ can be realized by projective subaction of a SFT which is a contradiction.

\eclaimprf

\end{example}

\section{Speed of convergence in general constructions}\label{section:GeneralConstruction}

\subsection{Notion of  Turing Machine}

In this section we recall some properties on Turing machine and algorithmic complexity.

\begin{definition}
Formally, a Turing machine with $k$ tapes $\TM=(k,Q,\Gamma,\#,q_0,\delta,Q_F)$ is defined by:
\begin{itemize}
\item $\Gamma$ a finite alphabet, with a blank symbol $\#\in\Gamma$. Initially, $k$ infinite memory tapes represented as an element of $(\Gamma^k)^{\Z}$, are filled with $\#$, except for a finite prefix on the first tape (the input), and a computing head is located on the first letter of the tape;
\item $Q$ the finite set of states of the head and  $q_0\in Q$ is the initial state;
\item $\delta : Q\times\Gamma^k\to Q\times\Gamma^k\times\{\leftarrow,\cdot\,
,\rightarrow\}^k$ the transition function. Given the state of the head and the letter associated, it reads on the tape, depending on its position, the head can change state, replace the letter and move by one cell at most.
\item $Q_F\subset Q$ the set of final states, when a final state is
reached, the computation stops and the output is the value currently written on the tape. 
\end{itemize}
\end{definition}

Turing machines are a very robust model of computation. We presented above only one of the possible definitions, but there exist several variants in the literature that are all equivalent from a decidability point of view (i.e. these variants all define the same recursive sets). Nevertheless these modifications on the definition are not without effects on the time and space complexities (time unit is one application of the transition function, space unit is one cell of the tape). 

To detect forbidden patterns in the projective subaction, one of the fundamental construction is the use of SFT to encode Turing machine computations. In this article we choose to use the basic version of TM because it provides simpler constructions when encoding computations inside an SFT. But the reader should have in mind that it is possible to improve time and space complexities, using by instance the non-exhaustive following acceleration techniques:

\begin{itemize}
 \item[\textbf{Compare-Copy:}] It is possible to compare or copy instantaneously a word between two markers from a tape to another one.
  \item[\textbf{Transfert head:}] It is possible to transfer instantaneously  the head to another cell of the tape marked by a special symbol. 
  \item[\textbf{Fill:}] It is possible to fill instantaneously a part of a tape with a periodic pattern.  
\end{itemize}

A set of patterns $F\subset\A^{\ast}$ is \define{recursive} if there exists a Turing machine such that on the input $u\in\A^{\ast}$ gives back $1$ if $u\in F$ and $0$ otherwise. A set of patterns $F\subset\A^{\ast}$ is \define{recursively enumerable} if there exists a Turing machine that, on the input $u\in\A^{\ast}$, gives back $1$ if $u\in F$ and does not halt otherwise.

Let $F$ be a recursively enumerable set of forbidden patterns.Then the complementary rectangular language of $\T_F\subset\A^{\Z^d}$, denoted $\Lang_{\textrm{rect}}(\T_F)^c=\Lang_{\textrm{rect}}(\A^{\Z^d})\smallsetminus\Lang_{\textrm{rect}}(\T_F)$, is also recursively enumerable. Let $\TM_{\Lang_{\textrm{rect}}(\T_F)^c}$ be a Turing machine associated to $\Lang_{\textrm{rect}}(\T_F)^c$, denote
\begin{itemize}
 \item $\Dtime_{\TM_{\Lang_{\textrm{rect}}(\T_F)^c}}(k)$ the maximal time needed by the Turing machine $\TM_{\Lang_{\textrm{rect}}(\T_F)^c}$ to know if a pattern of size $k$ is not in the language of $\T_F$;
 \item $\Dspace_{\TM_{\Lang_{\textrm{rect}}(\T_F)^c}}(k)$ the maximal space needed by the Turing machine $\TM_{\Lang_{\textrm{rect}}(\T_F)^c}$ to know if a pattern of size $k$ is not in the language of $\T_F$ (we just take in consideration space necessary for the computation, thus the input is considered to be in an auxiliary tape).
\end{itemize}
Of course $\Dtime_{\TM_{\Lang_{\textrm{rect}}(\T_F)^c}}(k)$ and $\Dspace_{\TM_{\Lang_{\textrm{rect}}(\T_F)^c}}(k)$ are not computable if $\Lang_{\textrm{rect}}(\T_F)^c$ is not recursive. 

If $F$ is a recursively enumerable set of patterns, by definition there exists a Turing machine $\TM_F=(k,Q,\Gamma,\#,q_0,\delta,Q_F)$ with the following behavior: it starts on the empty tape and successively writes the patterns of $F$ on its tape. Each time a word is entirely written, the machine enters a special state before starting again the enumeration -- the machine may stay a given time in this special state. This machine is called an \define{enumerative Turing machine of $F$}. Every recursively enumerable language admits an enumerative Turing machines.

A set of finite patterns $F$ \define{forbids the pattern $w$} if $w\notin\Lang(\T_F)$, equivalently if there exists $n\in\N$ such that, for all $u\in\A^{[0;n-1]^d}$ such that $w\sqsubset u$, there exists $u'\in F$ which verifies $u'\sqsubset u$. 

Let $\TM_F$ be an enumerative Turing machine of $F$, denote $\Dtime^{\textrm{enu}}_{\TM_F}(k)$ (respectively $\Dspace^{\textrm{enu}}_{\TM_F}(k)$)  the smallest time (respectively the smallest space) taken by the Turing machine $\TM_F$ such that the subset of $F$ generated at this time forbid all the words of size $n$ of $\Lang_{n}(\T_F)^c=\Lang_{n}(\A^{\Z^d})\smallsetminus\Lang_{n}(\T_F)$.

\subsection{Speed of convergence for previous constructions}

In this section, we just give some elements to determine the speed of convergence given by the construction of M. Hochman~\cite{Hochman-2009} and N. Aubrun and M. Sablik~\cite{Aubrun-Sablik-2010}.

The idea of the two proofs is to construct a d-dimensional subshift of finite type, denoted $\T_{\texttt{Final}}$, which realizes a given effective subshift $\Sigma\subset \A^{\Z}$ in one direction after a projection where $d=3$ in~\cite{Hochman-2009} and $d=2$ in~\cite{Aubrun-Sablik-2010}. In the two constructions the subshift $\T_{\texttt{Final}}$ is constituted by three layers:
\begin{itemize}
\item the first one is $\A^{\Z^d}$ and contains different copies of the same configuration $y\in\A^{\Z}$ superposed on additional directions, the additional finite type conditions check if $y\in\Sigma$; 
\item the second is $\T_{\texttt{Grid}}\subset\A_{\texttt{Grid}}^{\Z^d}$ and constructs a grid which allows to implement well initialized Turing machine in all the configuration with different size of time and space for the computation;
\item the third is $\T_{\TM}\subset\A_{\TM}^{\Z^d}$ and cheeks if no forbidden pattern appears: since $\Sigma$ is an effective subshift, its forbidden patterns can be enumerated by a Turing machine, thus the purpose is to implement a Turing machine $\TM_F$ which enumerates these forbidden patterns and an additional procedure $\TM_\texttt{Search}$ which cheeks if the patterns produced appear in the configuration of the first layer (if it is the case, the Turing machine enters in a special state which is forbidden by the subshift of finite type $\T_{\texttt{Final}}$). 
\end{itemize}
Thus for $x\in\T_{\texttt{Final}}\subset\A^{\Z^d}\times\T_{\texttt{Grid}}\times\T_{\TM}$, one has $\pi(x)_{\Z\vect{e_1}}=\pi(x)_{\vect{i}+\Z\vect{e_1}}\in\Sigma$ for all $\vect{i}\in\Vect{\vect{e_2},\dots,\vect{e_d}}$ where $\pi$ is the factor on the first layer which deletes computation states. In particular $\Sigma=\sa{1}{\pi(\T_{\texttt{Final}})}$ but moreover $\Sigma$ is conjugate to a sub-action of $\pi(\T_{\texttt{Final}})$. This result is stronger that only realization by projective subaction and allows to construct local rules for exotic tilings~\cite{Aubrun-Sablik-2012,Fernique-Sablik-2012}.

Now the purpose is to determine $\varphi_{F_{\texttt{Final}},\pi,d\to1}$. By definition, $\Dtime^{\textrm{enu}}_{\TM_F}$ and $\Dspace^{\textrm{enu}}_{\TM_F}$ are respectively the time and the space necessary to enumerate all forbidden patterns of $F$ which forbid all patterns of size $k$ of $\T_F$. Thus to determine $\varphi_{F_{\texttt{Final}},\pi,d\to1}$ we must analyze the width of the row necessary in $\T_{\texttt{Final}}$ to give $\Dtime^{\textrm{enu}}_{\TM_F}(k)$-time steep and $\Dspace^{\textrm{enu}}_{\TM_F}(k)$ space at $\TM_F$ and the larger of the row necessary to detect the forbidden pattern by the additional procedure $\TM_\texttt{Search}$. 

\paragraph{Speed of convergence in the construction of~\cite{Hochman-2009}}\ As it is described in Section 4 of~\cite{Hochman-2009}, $\T_{\texttt{Grid}}$ gives a rectangular partition of $\Z^3$ generated by $\widehat{W}_3\times\widehat{W}_5$ where $\widehat{W}_3$ and $\widehat{W}_5$ are obtained thanks a substitution and generate on each line topelitz sequences. Thus for all $s,t\in\N$ there exists $\M\subset\Z$ such that for all $i\in\M$, the slice $\{i\}\times\Z^2$ is a partition of rectangle of size $3^s\times 5^t$ which delimitates computation zones. Moreover $\M$ has not gap bigger than $3^s5^t$. According to the processes of copy of the initial configuration in the first layer, we need an approximation row of width $O(3^s5^t)$ to detect a forbidden word enumerated in space less than $3^s$ and in time less than $5^t$. One deduces that 
 
\[\left(k\longmapsto\varphi_{F_{\texttt{Final}},\pi,3\to1}\right)\sim\left(k\longmapsto\Dspace^{\textrm{enu}}_{\TM_F}(k)\Dtime^{\textrm{enu}}_{\TM_F}(k)\right).\]

\paragraph{Speed of convergence in the construction of~\cite{Aubrun-Sablik-2010}}\   
As it is described in Section 2 of~\cite{Aubrun-Sablik-2010} (sum up in Fact 2.4), $\T_{\texttt{Grid}}$ defines fractured zone of computation to implement the Turing machine of size $2^n\times 2^{2^n}$, the first coordinate according to $\vect{e_1}$ corresponds to the space and the second according to $\vect{e_2}$ corresponds to the time. By the substitution rules and the clock rules, this ractured zone of computation is included in a pattern of $\T_{\texttt{Grid}}$ of size $4^n\times(2^n2^{2^n})$ and every row $\T_{\texttt{Final}}^{m,2\to1}$ of size $m=2^{n+2^n}$ contains such computation zone every $4^n$ cells. Since the time to cheek if a forbidden pattern of size $k$ appears in the responsibility zone ($n^2 2^n$ steep in direction $\vect{e_2}$ by Fact 3.4 of~\cite{Aubrun-Sablik-2010}) is negligible according to the time given at the Turing machine to compute forbidden patterns ($2^{n+2^n}$ steep in direction $\vect{e_2}$), one deduces that $\left(k\mapsto\varphi_{F_{\texttt{Final}},\pi,2\to1}(k)\right)\sim\left(k\mapsto 2^{n(k)+2^{n(k)}}\right)$ where $n(k)=\min\{n: \Dspace^{\textrm{enu}}_{\TM_F}(k)<2^n\}$. Thus
\[\left(k\longmapsto\varphi_{F_{\texttt{Final}},\pi,2\to1}\right)\sim\left(k\longmapsto\Dspace^{\textrm{enu}}_{\TM_F}(k)2^{\Dspace^{\textrm{enu}}_{\TM_F}(k)}\right).\]

\subsection{A more efficient construction}

In the particular case where $\Sigma$ is an effective subshift that contains a periodic configuration, the previous constructions can be highly simplified, and this simplification also improves the speeds of convergence. In a few words, the same type of construction with different layers is built. However the computation cheeks if no forbidden patterns appear only in one line, the other lines are mapped into the periodic configuration by the factor map. Thus the computation zones do not need to be fractionated, one uses a simplified layer $\T_{\texttt{Grid}}$ which allows a computation in real time.

\begin{theorem}\label{theorem.QuickerConstruction}
Let $\Sigma\subset\A^{\Z^d}$ be an effective subshift of dimension $d$ with a periodic point defined by a recursively enumerable set $F$ of forbidden patterns enumerated by a Turing machine $\TM_F$. 

Then there exists a subshift of finite type $\T$ of dimension $d+1$ and a factor map $\pi$ such that $\sa{d}{\pi(\T)}=\Sigma$ and \[\varphi_{F,\pi,d+1\to d}\sim\Dtime^{\textrm{enu}}_{\TM_F}.\]
\end{theorem}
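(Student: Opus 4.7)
The plan is to adapt the three-layer SFT construction of~\cite{Hochman-2009,Aubrun-Sablik-2010} (base layer + grid layer + Turing machine layer) in a way that exploits the periodic configuration $p \in \Sigma$. The fundamental idea is that, because $p \in \Sigma$ is available, we can arrange for all but one distinguished ``center'' slice of each SFT configuration (along the $\vect{e_{d+1}}$ direction) to carry the fixed periodic $p$ in the base layer; the remaining non-center slices then host a non-fractured Turing-machine simulation that runs in real time, uses one horizontal direction as its tape, and uses $\vect{e_{d+1}}$ itself as its time axis. This removes the need for the self-similar substitutive grids of prior work, since computation now has unrestricted access to $\vect{e_{d+1}}$.

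Concretely, I would tag each slice with one of the labels below/center/above and enforce via local rules only the vertical transitions below$\to$below, below$\to$center, center$\to$above, above$\to$above. This guarantees every configuration has at most one center slice; below and above slices are further forced by local rules to carry $p$. The center slice (if present) carries an arbitrary $y \in \A^{\Z^d}$, and a transmission sub-layer propagates a copy of $y$ vertically so that the machine, sitting in any non-center slice, can still read the center-slice content. The grid layer $\T_{\texttt{Grid}}$ is a simple periodic partition along $\vect{e_{d+1}}$, supplying fresh starting positions for the TM at bounded density; the machine itself simulates $\TM_F$ enumerating forbidden patterns and, via an auxiliary sub-procedure, compares each enumerated pattern against the transmitted center-slice content, entering a globally forbidden state on match. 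The factor $\pi$ erases tags and the grid/TM/transmission layers, keeping only the base-layer content, so that $\sa{d}{\pi(\T_{\texttt{Final}})} = \Sigma$ (using $p \in \Sigma$ for the degenerate all-$p$ configurations without a center slice).

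For the speed of convergence, the upper bound $\varphi_{F_{\texttt{Final}},\pi,d+1\to d} \prec \Dtime^{\textrm{enu}}_{\TM_F}$ comes from counting: to forbid every illegal pattern of length $k$, $\TM_F$ must be given $\Dtime^{\textrm{enu}}_{\TM_F}(k)$ time steps, and since the machine runs one step per $\vect{e_{d+1}}$-cell, the strip needs to be of width at least $\Dtime^{\textrm{enu}}_{\TM_F}(k)$ up to an additive $O(k)$ overhead for the search sub-procedure, which is absorbed by $\sim$. The matching lower bound $\Dtime^{\textrm{enu}}_{\TM_F} \prec \varphi_{F_{\texttt{Final}},\pi,d+1\to d}$ is a consequence of hardwiring $\TM_F$ as the enumeration mechanism: a forbidden pattern cannot be detected in the strip before $\TM_F$ has actually enumerated it, which by definition takes $\Dtime^{\textrm{enu}}_{\TM_F}(k)$ steps.

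I expect the main technical obstacle to be the coherent design of the transmission layer that propagates the center-slice content to every non-center slice via only local rules, while remaining shift-invariant and coexisting with the tag structure (in particular handling the degenerate configurations with no center slice, where the transmission data is unconstrained from below but must still satisfy the TM's forbidden-pattern check). A secondary subtlety is ensuring that the periodic grid supplies enough starting positions so that every $k$-window in the center slice is checked by some TM run inside the strip; this is handled by running several horizontal offsets of $\TM_F$ in parallel so that their coverage overlaps.
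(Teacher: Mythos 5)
Your overall architecture coincides with the paper's: use the periodic point $p$ to force every row except one distinguished ``center'' row to carry $p$, let the distinguished row carry an arbitrary $y$, and run a Turing-machine simulation in real time with $\vect{e_{d+1}}$ as the time axis, so that the fractured substitutive grids of Hochman and Aubrun--Sablik are no longer needed for time allocation. Your tag mechanism (below/center/above with one-way transitions), the factor map, and the two-sided bound $\varphi_{F,\pi,d+1\to d}\sim\Dtime^{\textrm{enu}}_{\TM_F}$ all match the paper's proof in spirit.

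The genuine gap is the grid layer. You describe it as ``a simple periodic partition along $\vect{e_{d+1}}$, supplying fresh starting positions for the TM at bounded density'', but no genuinely periodic partition can work: by shift-invariance the TM heads must be placed with bounded horizontal density, hence their tapes are confined to computation zones; if those zones have fixed width the machine can never use more than a bounded amount of tape (so it cannot enumerate all of $F$), and if they are allowed to be unbounded the parallel runs collide. Time is indeed free along $\vect{e_{d+1}}$, but \emph{space} along the horizontal tape direction still has to be allocated dynamically, and this is where the real technical content of the paper's proof lies. The paper resolves it with a red/blue layer $\T_{\texttt{Grid}}$ in which the distinguished row is forced to carry the period-two alternation and each row marked $\ast$ (emitted exactly when the machine requests more tape) doubles the width of the monochromatic zones, erasing the computations sitting in red zones and letting those in blue zones continue; this keeps zones of width $2^n$ occurring with gaps $O(2^n)$ at the appropriate heights and yields $\varphi(k)\leq t_F(k)+2^{1+\min\{n:\, s_F(k)<2^n\}}\sim\Dtime^{\textrm{enu}}_{\TM_F}(k)$, using $\Dspace^{\textrm{enu}}_{\TM_F}\leq\Dtime^{\textrm{enu}}_{\TM_F}$. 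A secondary under-specified point is your transmission/comparison mechanism: a static vertical copy of $y$ together with ``several horizontal offsets of $\TM_F$ in parallel'' either misses alignments or costs extra search time that you do not account for; the paper instead takes the copy layer $\T_{\texttt{Config}}$ to be invariant under $\s^{\vect{e_1}-\vect{e_2}}$, so the configuration scrolls past the enumerated forbidden word and every alignment is tested by a purely local comparison rule at no additional time cost.
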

\begin{proof}
The proof is given for $d=1$ but may be easy generalizes to any dimension. 

Consider an effective subshift $\Sigma\subset\A^{\Z}$ of dimension $1$ with a periodic point $^{\infty}w^{\infty}\in\Sigma$. Suppose that $\Sigma$ is defined by a recursively enumerable set $F$ of forbidden patterns, enumerated by the enumerative Turing machine $\TM=(k,Q,\Gamma,\#,q_0,\delta,Q_F)$ (we suppose that $\TM$ enumerates all forbidden patterns of $F$ on the first tape separated by the symbol $\$$). Finally we assume that the tapes of $\TM$ are onside, that is to say indexed by $\N$, if not it is sufficient to double the alphabet and consider that each cell stocks the positive and negative indexed cell. 

We now construct a subshift of finite type $\T_{\texttt{Final}}\subset\A_{\texttt{Final}}^{\Z^2}$ and a factor map $\pi_{\texttt{Final}}:\A_{\texttt{Final}}\to\A$  such that $\sa{1}{\pi_{\texttt{Final}}(\T_{\texttt{Final}})}=\Sigma$. The subshift $\T_{\texttt{Final}}$ is constituted by four layers:
\begin{enumerate}
\item[\textbf{Layer 1:}] The first layer is $\T_{\texttt{Line}}\subset \left((\A\times\{\inside{\blocku{0}{0}{}},\inside{\blockd{0}{0}{}}\})\cup \{\inside{\blockl{0}{0}}\}\right)^{\Z^2}$ the subshift of finite type such that the forbidden patterns are 
$$F_{\texttt{Line}}=\left\{
\vbox to 1.7cm{\hbox{\begin{tikzpicture}[scale=0.5]
\blockl{0}{1}
\blockl{0}{0}

\blockd{2}{1}{a}
\blockl{2}{0}

\blockl{4}{1}
\blocku{4}{0}{a}

\draw (6,1) node[right] {where $a\in\A$};

\draw [dotted, fill=black!5] (0,-2) rectangle (5,-1);
\draw (2.5,-1.5) node{$\cdots$};
\blocku{0}{-2}{a_1}
\blocku{4}{-2}{a_{|w|}}
\draw (6,-1.5) node[right] {where $a_1\dots a_{|w|}\not\sqsubset ^{\infty}w^{\infty}$};

\draw [dotted, fill=black!25] (0,-4) rectangle (5,-3);
\draw (2.5,-3.5) node{$\cdots$};
\blockd{0}{-4}{a_1}
\blockd{4}{-4}{a_{|w|}}
\draw (6,-3.5) node[right] {where $a_1\dots a_{|w|}\not\sqsubset ^{\infty}w^{\infty}$};
\end{tikzpicture}}}
\right\}$$

Concretely, for a configuration $x\in\T_{\texttt{Line}}$ there is at most one $i\in\Z$ such that $\sa{\vect{e_1}\Z}{\s^{i\vect{e_2}}(x)}=^{\infty}\inside{\blockl{0}{0}}^{\infty}$ and for all $j\ne i$ one has $\sa{\vect{e_1}\Z}{\s^{j\vect{e_2}}(x)}\in\{\s^k(^{\infty}w^{\infty}):k\in\Z\}\times\{\inside{\blocku{0}{0}{}},\inside{\blockd{0}{0}{}}\}^{\Z}$.
\item[\textbf{Layer 2:}] The second layer is the subshift of finite type $\T_{\texttt{Config}}=\{x\in\A^{\Z^2}: \s^{\vect{e_1}-\vect{e_2}}(x)=x\}$.
\item[\textbf{Layer 3:}] The third layer is $\T_{\texttt{Grid}}\subset \left\{\inside{\blockred{0}{0}{}},\inside{\blockred{0}{0}{\ast}}\inside{\blockblue{0}{0}{}},\inside{\blockblue{0}{0}{\ast}}\right\}^{\Z^2}$ the subshift of finite type such that the forbidden patterns are $F_{\texttt{Grid}}=F_{\texttt{Grid}}^1\cup F_{\texttt{Grid}}^2\cup F_{\texttt{Grid}}^3\cup F_{\texttt{Grid}}^4\cup F_{\texttt{Grid}}^5$ where: 
\begin{itemize}
\item $F_{\texttt{Grid}}^1=\left\{\inside{\blockred{0}{0}{\ast}\blockred{1}{0}{}},\inside{\blockred{0}{0}{\ast}\blockblue{1}{0}{}},\inside{\blockblue{0}{0}{\ast}\blockred{1}{0}{}},\inside{\blockblue{0}{0}{\ast}\blockblue{1}{0}{}},\inside{\blockred{0}{0}{}\blockred{1}{0}{\ast}},\inside{\blockred{0}{0}{}\blockblue{1}{0}{\ast}},\inside{\blockblue{0}{0}{}\blockred{1}{0}{\ast}},\inside{\blockblue{0}{0}{}\blockblue{1}{0}{\ast}}\right\}$, this set implies that if the symbol $\ast$ appears in a cell of one line then all cells of this line contain the symbol $\ast$;

\item $F_{\texttt{Grid}}^2=\left\{\insidebis{\blockred{0}{0}{}\blockblue{0}{1}{}},\insidebis{\blockblue{0}{0}{}\blockred{0}{1}{}},\insidebis{\blockblue{0}{0}{}\blockred{0}{1}{\ast}},\insidebis{\blockred{0}{0}{}\blockblue{0}{1}{\ast}}\right\}$, this set implies that in a column the color changes only if it crosses the symbol $\ast$ from bottom to top;

\item $F_{\texttt{Grid}}^3=\left\{\insidebis{\blockred{0}{0}{\ast}\blockred{1}{0}{\ast}\blockred{0}{1}{}\blockblue{1}{1}{}},\insidebis{\blockred{0}{0}{\ast}\blockred{1}{0}{\ast}\blockblue{0}{1}{}\blockred{1}{1}{}},\insidebis{\blockblue{0}{0}{\ast}\blockblue{1}{0}{\ast}\blockred{0}{1}{}\blockblue{1}{1}{}},\insidebis{\blockblue{0}{0}{\ast}\blockblue{1}{0}{\ast}\blockblue{0}{1}{}\blockred{1}{1}{}},\insidebis{\blockred{0}{0}{\ast}\blockred{1}{0}{\ast}\blockred{0}{1}{\ast}\blockblue{1}{1}{\ast}},\insidebis{\blockred{0}{0}{\ast}\blockred{1}{0}{\ast}\blockblue{0}{1}{\ast}\blockred{1}{1}{\ast}},\insidebis{\blockblue{0}{0}{\ast}\blockblue{1}{0}{\ast}\blockred{0}{1}{\ast}\blockblue{1}{1}{\ast}},\insidebis{\blockblue{0}{0}{\ast}\blockblue{1}{0}{\ast}\blockblue{0}{1}{\ast}\blockred{1}{1}{\ast}}\right\}$, this set implies that when it crosses a line with the symbol $\ast$, a monochromatic sequence of symbols remains monochromatic;

 \item $F_{\texttt{Grid}}^4=\left\{\insidebis{\blockred{0}{0}{\ast}\blockblue{1}{0}{\ast}\blockred{0}{1}{}\blockblue{1}{1}{}},\insidebis{\blockred{0}{0}{\ast}\blockblue{1}{0}{\ast}\blockblue{0}{1}{}\blockred{1}{1}{}},\insidebis{\blockred{0}{0}{\ast}\blockblue{1}{0}{\ast}\blockred{0}{1}{\ast}\blockblue{1}{1}{\ast}},\insidebis{\blockred{0}{0}{\ast}\blockblue{1}{0}{\ast}\blockblue{0}{1}{\ast}\blockred{1}{1}{\ast}}\right\}$, this set implies that when it crosses a line with the symbol $\ast$, a transition $\inside{\blockred{0}{0}{}\blockblue{1}{0}{}}$ becomes monochromatic;

 \item $F_{\texttt{Grid}}^5=\left\{\insidebis{\blockblue{0}{0}{\ast}\blockred{1}{0}{\ast}\blockred{0}{1}{}\blockred{1}{1}{}},\insidebis{\blockblue{0}{0}{\ast}\blockred{1}{0}{\ast}\blockblue{0}{1}{}\blockblue{1}{1}{}},\insidebis{\blockblue{0}{0}{\ast}\blockred{1}{0}{\ast}\blockred{0}{1}{\ast}\blockred{1}{1}{\ast}},\insidebis{\blockblue{0}{0}{\ast}\blockred{1}{0}{\ast}\blockblue{0}{1}{\ast}\blockblue{1}{1}{\ast}}\right\}$, this set implies that when it crosses a line with the symbol $\ast$, a transition $\inside{\blockblue{0}{0}{}\blockred{1}{0}{}}$ stays a transition of colors in view to force an alternation of the colors. 
\end{itemize}

Consider a configuration $x\in\T_{\texttt{Grid}}=\ft{F_{\texttt{Grid}}}{\left\{\inside{\blockred{0}{0}{}},\inside{\blockred{0}{0}{\ast}}\inside{\blockblue{0}{0}{}},\inside{\blockblue{0}{0}{\ast}}\right\}^{\Z^2}}$. On each line of $x$, the two colors alternates and this alternation is repeated above until it crosses a line which contains the symbol $\ast$. In this case the transitions red/blue become monochromatic and the transitions blue/red force the alternation. Thus the sequences of monochromatic colors become larger. We remark that if a line contains the periodic configuration $^{\infty}\left( \inside{\blockred{0}{0}{}\blockblue{1}{0}{}}\right)^{\infty}$, then all lines below contain this periodic configuration and upstair, if we have cross $n$ time a line with the symbol $\ast$, we obtain a line with the periodic configuration $^{\infty}\left(\inside{\blockred{0}{0}{}}^{2^n}\inside{\blockblue{1}{0}{}}^{2^n}\right)^{\infty}$ (see Figure~\ref{figure.Grid}). 

\begin{figure}[h!]
\begin{center}
\begin{tikzpicture}[scale=0.4]
\begin{scope}
\draw (-1.2,6.5) node[left] {$x=$};
\clip[draw,decorate,decoration={random steps, segment length=3pt, amplitude=1pt}] (0.2,0.2) rectangle (12.8,12.8);
\foreach \x in {0, ...,13}{\draw[dotted] (\x,0) -- (\x,13);}
\foreach \y in {0, ...,13}{\draw[dotted] (0,\y) -- (13,\y);}

\foreach \x in {0, ...,3}{
  \foreach \y in {0, ..., 2}{
     \blockblue{\x}{\y}{}}}
\foreach \x in {4, ...,5}{
  \foreach \y in {0, ..., 2}{
     \blockred{\x}{\y}{}}}
\foreach \x in {6, ...,7}{
  \foreach \y in {0, ..., 2}{
     \blockblue{\x}{\y}{}}}
\foreach \x in {8, ...,10}{
  \foreach \y in {0, ...,2}{
     \blockred{\x}{\y}{}}}
\foreach \x in {11, ...,12}{
  \foreach \y in {0, ...,2}{
     \blockblue{\x}{\y}{}}}

\foreach \x in {0, ...,3}{\blockblue{\x}{3}{\ast}}
\foreach \x in {4, ...,5}{\blockred{\x}{3}{\ast}}
\foreach \x in {6, ...,7}{\blockblue{\x}{3}{\ast}}
\foreach \x in {8, ...,10}{\blockred{\x}{3}{\ast}}
\foreach \x in {11, ...,12}{\blockblue{\x}{3}{\ast}}

\foreach \x in {0, ...,3}{
  \foreach \y in {4, ...,6}{
     \blockred{\x}{\y}{}}}
\foreach \x in {4, ...,7}{
  \foreach \y in {4, ...,6}{
     \blockblue{\x}{\y}{}}}
\foreach \x in {8, ...,12}{
  \foreach \y in {4, ...,6}{
     \blockred{\x}{\y}{}}}
     
\foreach \x in {0, ...,3}{\blockred{\x}{7}{\ast}}
\foreach \x in {4, ...,7}{\blockblue{\x}{7}{\ast}}
\foreach \x in {8, ...,12}{\blockred{\x}{7}{\ast}}     

\foreach \x in {0, ...,7}{
  \foreach \y in {8, ...,9}{
     \blockred{\x}{\y}{}}}
\foreach \x in {8, ...,12}{
  \foreach \y in {8, ...,9}{
     \blockblue{\x}{\y}{}}}

\foreach \x in {0, ...,7}{\blockred{\x}{10}{\ast}}
\foreach \x in {8, ...,12}{\blockblue{\x}{10}{\ast}}

\foreach \x in {0, ...,12}{
  \foreach \y in {11, ...,12}{
     \blockred{\x}{\y}{}}}
\end{scope}
\begin{scope}
\draw (17.8,6.5) node[left] {$y=$};
\clip[draw,decorate,decoration={random steps, segment length=3pt, amplitude=1pt}] (19.2,0.2) rectangle (31.8,12.8);

\foreach \x in {19, ...,32}{\draw[dotted] (\x,-0) -- (\x,13);}
\foreach \y in {0, ...,13}{\draw[dotted] (19,\y) -- (32,\y);}


\foreach \x in {0, ...,6}{
  \foreach \y in {0, ..., 3}{
     \blockblue{2*\x+19}{\y}{}}}
\foreach \x in {0, ...,5}{
  \foreach \y in {0, ..., 3}{
     \blockred{2*\x+20}{\y}{}}}

\foreach \x in {0, ...,6}{\blockblue{2*\x+19}{4}{\ast}}
\foreach \x in {0, ...,5}{\blockred{2*\x+20}{4}{\ast}}

\foreach \x in {0, ...,3}{
  \foreach \y in {5, ...,7}{
     \blockblue{4*\x+19}{\y}{}}}
\foreach \x in {1, ...,3}{
  \foreach \y in {5, ...,7}{     
     \blockblue{4*\x+18}{\y}{}}}
\foreach \x in {0, ...,2}{
  \foreach \y in {5,...,7}{
     \blockred{4*\x+20}{\y}{}
     \blockred{4*\x+21}{\y}{}}}

\foreach \x in {0, ...,3}{\blockblue{4*\x+19}{8}{\ast}}
\foreach \x in {1, ...,3}{\blockblue{4*\x+18}{8}{\ast}}
\foreach \x in {0, ...,2}{\blockred{4*\x+20}{8}{\ast}
     \blockred{4*\x+21}{8}{\ast}}

\foreach \x in {20,21,22,23,28,29,30,31}{\blockblue{\x}{9}{\ast}}
\foreach \x in {19,24,25,26,27}{\blockred{\x}{9}{\ast}}

\foreach \x in {19, ...,23}{
  \foreach \y in {10,...,12}{
     \blockred{\x}{\y}{}}}
\foreach \x in {24, ...,31}{
  \foreach \y in {10,...,12}{     
     \blockblue{\x}{\y}{}}}
\end{scope}
\end{tikzpicture}
\end{center}

\caption{$x$ and $y$ are two examples of configurations of $\T_{\texttt{Grid}}$. The configuration $y$ is an example where at least one line is periodic, thus all lines are periodic.}\label{figure.Grid}
\end{figure}
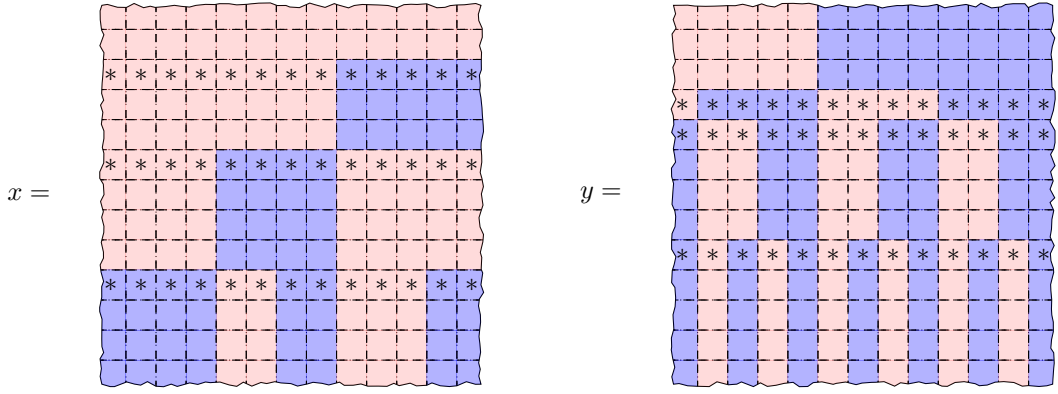

\item[\textbf{Layer 4:}] By definition the alphabet $\Gamma$ of $\TM$ contains $\A\cup\{\sharp,\$\}$. Denote $\A_{\TM}=\left((Q\times\Gamma)\cup\Gamma\right)^k$ where $k$ is the number of tapes and $\pi_{\TM,k}$ the projection on the $k^\textrm{th}$ coordinate of $\A_{\TM}$ which corresponds at the information on the $k^\textrm{th}$ tape. We recall that the forbidden words are enumerated on the first tape. The forth layer is a subshift of finite type $\T_{\TM}\subset \A_{\TM}^{\Z^2}$ given by the following set of forbidden patterns. 

\begin{itemize}
\item The behavior of the machine is given on each tape by $F_{\texttt{Comput}}=F_{\texttt{Comput}}^1\cup F_{\texttt{Comput}}^2\cup F_{\texttt{Comput}}^3\cup F_{\texttt{Comput}}^4$ where: 
\begin{itemize}
\item $F_{\texttt{Comput}}^1=\left\{\insidebis{\blocktm{0}{0}{\alpha}\blocktm{1}{0}{\beta}\blocktm{2}{0}{\gamma}\blocktm{0}{1}{a}\blocktm{1}{1}{b}\blocktm{2}{1}{c}} : \alpha,\beta,\gamma\in\Gamma',a,b,c\in\A_{\TM} \textrm{ such that }\inside{\blocktm{0}{0}{\alpha}\blocktm{1}{0}{\beta}\blocktm{2}{0}{\gamma}}\ne \inside{\blocktm{0}{0}{a}\blocktm{1}{0}{b}\blocktm{2}{0}{c}}\right\}$, this set implies that if the head does not appear, the tape does not change;
\item  the folowing set implies that if the head moves to the left, the transition is respected 
$$F_{\texttt{Comput}}^2=\left\{\begin{array}{l}\insidebis{\blocktm{0}{0}{\alpha}\blocktmhead{1}{0}{q_1}{\beta}\blocktm{2}{0}{\gamma}\blocktm{0}{1}{a}\blocktm{1}{1}{b}\blocktm{2}{1}{c}} : q_1,q_2\in Q, \alpha,\beta,\beta'\in\Gamma',\gamma,a,b,c\in\A_{\TM} \textrm{ such that } \\
\\
\delta(q_1,\beta)=(q_2,\beta',\leftarrow)\textrm{ and }\inside{\blocktm{0}{0}{a}\blocktm{1}{0}{b}\blocktm{2}{0}{c}}\ne \inside{\blocktmhead{0}{0}{q_2}{\alpha}\blocktm{1}{0}{\beta'}\blocktm{2}{0}{\gamma}}
 \end{array}\right\};$$
\item  the folowing set implies that if the head moves to the right, the transition is respected 
$$F_{\texttt{Comput}}^3=\left\{\begin{array}{l}\insidebis{\blocktm{0}{0}{\alpha}\blocktmhead{1}{0}{q_1}{\beta}\blocktm{2}{0}{\gamma}\blocktm{0}{1}{a}\blocktm{1}{1}{b}\blocktm{2}{1}{c}} : q_1,q_2\in Q, \beta,\beta',\gamma\in\Gamma',\alpha,a,b,c\in\A_{\TM} \textrm{ such that } \\
\\
\delta(q_1,\beta)=(q_2,\beta',\rightarrow)\textrm{ and }\inside{\blocktm{0}{0}{a}\blocktm{1}{0}{b}\blocktm{2}{0}{c}}\ne \inside{\blocktm{0}{0}{\alpha}\blocktm{1}{0}{\beta'}\blocktmhead{2}{0}{q_2}{\gamma}}
 \end{array}\right\};$$
 \item  the folowing set implies that if the head does not move, the transition is respected 
$$F_{\texttt{Comput}}^4=\left\{\begin{array}{l}\insidebis{\blocktm{0}{0}{\alpha}\blocktmhead{1}{0}{q_1}{\beta}\blocktm{2}{0}{\gamma}\blocktm{0}{1}{a}\blocktm{1}{1}{b}\blocktm{2}{1}{c}} : q_1,q_2\in Q, \beta,\beta'\in\Gamma',\alpha,\gamma,a,b,c\in\A_{\TM} \textrm{ such that } \\
\\
\delta(q_1,\beta)=(q_2,\beta',\cdot)\textrm{ and }\inside{\blocktm{0}{0}{a}\blocktm{1}{0}{b}\blocktm{2}{0}{c}}\ne \inside{\blocktm{0}{0}{\alpha}\blocktmhead{1}{0}{q_2}{\beta'}\blocktm{2}{0}{\gamma}}
 \end{array}\right\}.$$
\end{itemize}
Thus condition $F_{\texttt{Comput}}$ codes the rules to obtain the space-time diagram of the Turing Machine $\TM$. 

\item Sometime, we need to erase the two tapes and put the blank symbol, we use the following forbidden patterns:
$$F_{\texttt{Erase}}=\left\{\insidebis{\blocktm{0}{0}{\alpha}\blocktm{1}{0}{\beta}\blocktm{2}{0}{\gamma}\blocktm{0}{1}{\alpha'}\blocktm{1}{1}{\beta'}\blocktm{2}{1}{\gamma'}}:\alpha,\beta,\gamma,\alpha',\gamma'\in\A_{\TM}\textrm{ and }\beta'\in\A_{\TM}\setminus\{(\sharp)^k\}\right\}.$$
\end{itemize}

Each tape must verify the previous rules, so we consider the forbidden patterns $F_{\TM}=F_\texttt{Erase}\cap\bigcup_{i}\pi^{-1}_{\TM,i}(F_{\texttt{Comput}})$ and one defines
$$\T_{\TM}=\ft{F_{\TM}}{\A_{\TM}^{\Z^2}},$$
Thus the evolution of a cell following $\vect{e_2}$ can be given by the transition of the Turing machine $\TM$ or be erased, this will be precise by the condition $F_{\texttt{Extend}}$.

\item[\textbf{Layer 5:}] The fifth layer is the full-shift $\T_{\texttt{Compar}}=\{\inside{\block{0}{0}{}}\inside{\blockgreen{0}{0}{}}\}^{\Z^2}$.
\end{enumerate}

On the subshift of finite type $\T_{\texttt{Line}}\times\T_{\texttt{Config}}\times\T_{\texttt{Grid}}\times\T_{\TM}\times\T_{\texttt{Compar}}$ we define the factor maps $\pi_{\texttt{Line}}$, $\pi_{\texttt{Config}}$, $\pi_{\texttt{Grid}}$, $\pi_{\TM}$ and $\pi_{\texttt{Compar}}$ which correspond to the projection of each coordinate.

To obtain the subshift of finite type $\T_{\texttt{Final}}\subset\T_{\texttt{Line}}\times\T_{\texttt{Config}}\times\T_{\texttt{Grid}}\times\T_{\TM}\times\T_{\texttt{Compar}}$ we add to the subshift of finite type $\T_{\texttt{Line}}\times\T_{\texttt{Config}}\times\T_{\texttt{Grid}}\times\T_{\TM}\times\T_{\texttt{Compar}}$ the finite set of forbidden patterns $F_{\texttt{SyncroLine}}\cup F_{\texttt{Init}}\cup  F_{\texttt{Extend}}\cup F_{\texttt{Compar}}$ which codes the interaction between the different layers, that is to say $$\T_{\texttt{Final}}=\ft{F_{\texttt{SyncroLine}}\cup F_{\texttt{Init}}\cup  F_{\texttt{Extend}}\cup F_{\texttt{Compar}}}{\T_{\texttt{Line}}\times\T_{\texttt{Config}}\times\T_{\texttt{Grid}}\times\T_{\TM}\times\T_{\texttt{Compar}}}.$$ These local rules are:
\begin{enumerate}
\item[\textbf{Rules $F_{\texttt{SyncroLine}}$:}] The forbidden patterns $F_{\texttt{SyncroLine}}$ imply that for $x\in\T_{\texttt{Final}}$ if $\pi_{\texttt{Line}}(x)_{(i_1,i_2)}=\inside{\blockl{0}{0}}$ then $\pi_{\texttt{Grid}}(x)_{([i_1,i_1+1],i_2)}\notin \left\{\inside{\blockred{0}{0}{}\blockred{1}{0}{}},\inside{\blockred{0}{0}{\ast}\blockred{1}{0}{\ast}},\inside{\blockblue{0}{0}{}\blockblue{1}{0}{}},\inside{\blockblue{0}{0}{\ast}\blockblue{1}{0}{\ast}}\right\}$ for $(i_1,i_2)\in\Z^2$. Thanks to these rules, if a line $^{\infty}\inside{\blockl{0}{0}}^{\infty}$ appears in the layer $\T_{\texttt{Line}}$ of a configuration, then it is synchronized with a periodic point $^{\infty}\inside{\blockblue{0}{0}{}\blockred{1}{0}{}}^{\infty}$ or  $^{\infty}\inside{\blockblue{0}{0}{\ast}\blockred{1}{0}{\ast}}^{\infty}$ in the layer $\T_{\texttt{Grid}}$. 

\item[\textbf{Rules $F_{\texttt{Init}}$:}] The forbidden patterns $F_{\texttt{Init}}$ imply that for $x\in\T_{\texttt{Final}}$ if $\pi_{\texttt{Line}}(x)_{(i_1,i_2)}=\inside{\blockl{0}{0}}$ then $\pi_{\TM,i}(x)_{(i_1,i_2)}=\inside{\blocktmhead{0}{0}{q_0}{\sharp}}$ on each tape $i\in[1,k]$. Thank to these rules, if a line $^{\infty}\inside{\blockl{0}{0}}^{\infty}$ appears in the layer $\T_{\texttt{Line}}$ of a configuration, then this line is initialized in $\T_{\TM}$.

\item[\textbf{Rules $F_{\texttt{Extend}}$:}] The forbidden patterns $F_{\texttt{Extend}}$ imply that for $x\in\T_{\texttt{Final}}$ one has the two following things: 
\begin{itemize}
\item one has $\delta(\pi_{\TM}(x)_{(i_1,i_2)})=(q',\alpha',\rightarrow)$, $\pi_{\texttt{Grid}}(x)_{([i_1,i_1+1],i_2)}\in\{\inside{\blockred{0}{0}{}\blockblue{1}{0}{}},\inside{\blockblue{0}{0}{}\blockred{1}{0}{}},\inside{\blockred{0}{0}{\ast}\blockblue{1}{0}{\ast}},\inside{\blockblue{0}{0}{\ast}\blockred{1}{0}{\ast}}\}$ if and only if $\pi_{\texttt{Grid}}(x)_{(i_1,i_2)}=\inside{\blockred{0}{0}{\ast}}$ or $\inside{\blockblue{0}{0}{\ast}}$;
\item if $\pi_{\texttt{Grid}}(x)_{(i_1,i_2)}=\inside{\blockred{0}{0}{\ast}}$ then $\pi_{\TM}(x)_{(i_1,i_2+1)}=\inside{\blocktm{0}{0}{\sharp}}$;
\item if $\pi_{\texttt{Grid}}(x)_{(i_1,i_2)}\ne\inside{\blockred{0}{0}{\ast}}$ then $\pi_{\TM,i}(x)_{([i_1-1,i_1+1],[i_2,i_2+1])}\notin F_{\texttt{Comput}}$ (that is to say $F_{Erase}$ is not take in consideration) for all tape $i\in[1,k]$.
\end{itemize}
Thanks to these rules, if a computation needs more space, the space allowed by a Turing machine is doubled if the head is in a blue zone and disappear if it is in a red zone, and this action is done only if the Turing machine need more place. Moreover, the tape can be erased if and only if the corresponding cells in $\T_{\texttt{Grid}}$ are in the state $\inside{\blockred{0}{0}{\ast} }$, in other place the computation holds following the transition rule of the Turing machine given by $F_{\texttt{Comput}}$.  

\item[\textbf{Rules $F_{\texttt{Compar}}$:}] The forbidden patterns $F_{\texttt{Compar}}$ imply that for $x\in\T_{\texttt{Final}}$ one has the  following things: 
\begin{itemize}
\item if $\pi_{\TM,1}(x)_{(i_1,i_2)}=\inside{\blocktm{0}{0}{\$}}$ then $\pi_{\texttt{Compar}}(x)_{(i_1,i_2)}=\inside{\blockgreen{0}{0}{}}$;
\item if $\pi_{\TM,1}(x)_{(i_1,i_2)}=\inside{\blocktm{0}{0}{a}}=\pi_{\texttt{Config}}(x)_{(i_1,i_2)}$ with $a\in\A$ and $\pi_{\texttt{Compar}}(x)_{(i_1-1,i_2)}=\inside{\blockgreen{0}{0}{}}$ then $\pi_{\texttt{Compar}}(x)_{(i_1,i_2)}=\inside{\blockgreen{0}{0}{}}$;
\item if $\pi_{\texttt{Compar}}(x)_{(i_1-1,i_2)}=\inside{\blockgreen{0}{0}{}}$ then $\pi_{\TM,1}(x)_{(i_1,i_2)}\ne\inside{\blocktm{0}{0}{\$}}$.
\end{itemize}
Thanks to these rules, a forbidden pattern which appears in the enumeration obtained in $\T_{\TM}$ is compared with the corresponding pattern which appears in $\T_{\texttt{Config}}$. If the two patterns coincide then the configuration is forbidden in $\T_{\texttt{Final}}$. 
\end{enumerate}

We define the factor map $\pi_{\texttt{Final}}:\T_{\texttt{Final}}\to\A^{\Z^2}$ by
$$\textrm{for all }x\in\T_{\texttt{Final}}\textrm{ and }\vect{i}\in\Z^2\textrm{ then }\pi(x)_{\vect{i}}= \begin{cases} \pi_{\texttt{Conf}}(x)_{\vect{i}} &\textrm{ if } \pi_{\texttt{Line}}(x)_{\vect{i}}=\inside{\blockl{0}{0}} \\ \textrm{the first coordinate of } \pi_{\texttt{Line}}(x)_{\vect{i}} &\textrm{ if not}\end{cases}.$$

\Claim{$\sa{\vect{e_1}\Z}{\pi_{\texttt{Final}}(\T_{\texttt{Final})}}\supset\Sigma$.}
\bclaimprf
Let $x\in\Sigma$ we construct a configuration $y\in\T_{\texttt{Line}}\times\T_{\texttt{Config}}\times\T_{\texttt{Grid}}\times\T_{\TM}\times\T_{\texttt{Compar}}$ such that 
\begin{eqnarray*}
\sa{\vect{e_1}\Z}{\pi_{\texttt{Line}}(y)}=^{\infty}\inside{\blockl{0}{0}}^{\infty}&,\qquad &\sa{\vect{e_1}\Z}{\pi_{\texttt{Config}}(y)}=x,\\ 
\sa{\vect{e_1}\Z}{\pi_{\texttt{Grid}}(y)}=^{\infty}\inside{\blockblue{0}{0}{}\blockred{1}{0}{}}^{\infty}&\textrm{and}&\sa{\vect{e_1}\Z}{\pi_{\TM}(y)}=^{\infty}\inside{\blocktmhead{0}{0}{q_0}{\sharp}}^{\infty}.
\end{eqnarray*}
Since $x\in\Sigma$, $\TM$ does not enumerate sub-patterns of $x$. Thus it is possible to complete the configuration $y$ to obtain a configuration of $\T_{\texttt{Final}}$ such that $\sa{\vect{e_1}\Z}{\pi_{\texttt{Final}}(y)}=x$.
\eclaimprf

\Claim{$\sa{\vect{e_1}\Z}{\pi_{\texttt{Final}}(\T_{\texttt{Final})}}\subset\Sigma$.} 
\bclaimprf
Let $y\in\T_{\texttt{Final}}$. 

If $\pi_{\texttt{Line}}(y)_{(0,0)}\ne\inside{\blockl{0}{0}}$ then $\sa{\vect{e_1}\Z}{\pi_{\texttt{Final}}(y)}=^{\infty}w^{\infty}\in\Sigma$. 

If $\pi_{\texttt{Line}}(y)_{(0,0)}=\inside{\blockl{0}{0}}$ we consider $u$ a sub-pattern of $x=\sa{\vect{e_1}\Z}{\pi_{\texttt{Final}}(y)}$ of size $n$. Assume that $u\notin\Lang(\Sigma)$, then there exists a word $w\sqsubset x$ such that $u \sqsubset w$ or $w\sqsubset u$ which is enumerated by $\TM$ in time $t_F(n)=\Dtime^{\textrm{enu}}_{\TM}(n)$ and space $s_F(n)=\Dtime^{\textrm{enu}}_{\TM}(n)$. 

By construction of $\T_{\texttt{Final}}$, one has $\sa{\vect{e_1}\Z}{\pi_{\texttt{Grid}}(\s^{t_F(n)}(y))}=^{\infty}\left(\inside{\blockred{0}{0}{}}^{2^k}\inside{\blockblue{1}{0}{}}^{2^k}\right)^{\infty}$ where $k=\min\{k':s_F(n)<2^{k'}\}$, moreover there exists $i\in\Z$ such that $\sa{\vect{e_1}\Z}{\pi_{\TM}(y)}_{{([i+2^kj,i+|w|+1+2^kj],t_F(n))}}=\$w\$$ for all $j\in\Z$. Since $\TM$ waits $2^{k+1}$ steeps of time before to use more space to compute the next one and $\sa{\vect{e_1}\Z}{\s^{\vect{e_2}}(\pi_{\texttt{Config}}(z))}=\s\left(\sa{\vect{e_1}\Z}{\pi_{\texttt{Config}}(z)}\right)$, we conclude there exists $k'$ such that $t_F(n)\leq k'\leq t_F(n)+2^{1+min\{k: s_F(n)<2^k\}}$ and $j\in\Z$ such that
$$\sa{\vect{e_1}\Z}{\pi_{\TM}(y)}_{{([i+2^kj,i+|w|-1+2^kj],t_F(n))}}=w=\sa{\vect{e_1}\Z}{\pi_{\texttt{Config}}(y)}_{{([i+2^kj,i+|w|-1+2^kj],t_F(n))}}.$$
By the condition $F_{\texttt{Compar}}$ this is impossible. Thus $\sa{\vect{e_1}\Z}{\pi_{\texttt{Final}}(y)}\in\Sigma$.
\eclaimprf

Thus $\sa{\vect{e_1}\Z}{\pi_{\texttt{Final}}(\T_{\texttt{Final})}}=\Sigma$, moreover $\varphi_{F,\pi,2\to 1}(k)= t_F(k)+2^{1+\min\{n: s_F(k)<2^n\}}$ for all $k\in\N$. In particular $\varphi_{F,\pi,d+1\to d}\sim\max(\Dtime^{\textrm{enu}}_{\TM_F},\Dspace^{\textrm{enu}}_{\TM_F})=\Dtime^{\textrm{enu}}_{\TM_F}$.
\end{proof}

\subsection{Increase the dimension to increase the speed}

In general, the properties studied on subshifts of finite type exhibit a gap between dimension one and dimension two, and one assimilates that behaviors for the other dimension identical to those of dimension 2. In fact it appears differences in an algorithmic point of view, in particular we have the following result which shows that if $\varphi\in \cF^{\sofic}_{\Sigma,d+d'\to d}$ then $\varphi^{\frac{d'}{d''}}\in\cF^{\sofic}_{\Sigma,d+d''\to d}$. The idea is to wrap configurations of the $d+d'$-dimensional subshift on the higher dimensions following a Hilbert curve obtained thanks to a substitution (see Figure~\ref{figure: WrappedPattern}).

\begin{theorem}
Let $\Sigma\subset\A^{\Z^{d}}$ be a subshift which is $\varphi,d+d'$-realizable by projective subaction of a sofic then it is $\varphi^{\frac{d'}{d''}},d+d''$-realizable by projective subaction of a sofic for $d''\geq d'$.
\end{theorem}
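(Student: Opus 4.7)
The plan is to fold the $d'$-dimensional fiber of the given realization into the larger $d''$-dimensional fiber by means of a Hilbert-type space-filling curve encoded in an SFT. Start from $\T_F \subset \B^{\Z^{d+d'}}$ and a factor $\pi: \T_F \to \A^{\Z^{d+d'}}$ with $\sa{d}{\pi(\T_F)} = \Sigma$ and $\varphi_{F,\pi,d+d'\to d} \sim \varphi$. The goal is to produce a $(d+d'')$-dimensional sofic realization $\T_{F'}$ with factor $\pi'$ such that $\sa{d}{\pi'(\T_{F'})} = \Sigma$, designed so that each $d''$-fiber of width $n$ carries as much information as a $d'$-fiber of width $\sim n^{d''/d'}$.

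First I would build, via a Robinson/Mozes-style substitution (this is precisely what the Hilbert-curve macros in the preamble suggest), an auxiliary SFT layer $\T_{\texttt{Hilbert}} \subset \mathcal{H}^{\Z^{d+d''}}$ whose configurations decorate each fiber $\{\vect{i}\}\times\Z^{d''}$ by a self-similar $d''$-dimensional Hilbert curve. This scaffolding provides a locally determined bijection $H$ between the cells of a $d''$-cube of side $n$ and the cells of a $d'$-block of comparable volume, hence linear size $\sim n^{d''/d'}$, with the bounded-distortion property: pairs of cells at distance $1$ in either space correspond to pairs at bounded distance in the other. Then I would superpose a $\B$-layer over $\Z^{d+d''}$ and impose local rules forcing the $(d+d')$-dimensional configuration obtained by unfolding the $\B$-symbols along $H$ to satisfy the forbidden patterns $F$; by bounded distortion, every radius-$r$ pattern of $F$ pulls back to a bounded-radius forbidden pattern in $\Z^{d+d''}$, so this remains a finite set of rules and defines the desired $\T_{F'}$. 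Finally define $\pi'$ by projecting onto the $\B$-layer and composing with $\pi$, restricted to the central $\Z^d$-fiber.

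The identity $\sa{d}{\pi'(\T_{F'})} = \Sigma$ follows because any $y\in\Sigma$ lifts to some $x\in\T_F$ with $\sa{d}{\pi(x)}=y$, which can be painted along the Hilbert curve of any $\T_{\texttt{Hilbert}}$ configuration to yield an element of $\T_{F'}$, and conversely every configuration of $\T_{F'}$ encodes an element of $\T_F$ on its fiber. For the speed, an approximation row of width $n$ in $\Z^{d+d''}$ around the central $\Z^d$ contains, on each $\Z^{d''}$-fiber, a $d''$-cube of side $\sim n$, which through $H$ sees a $\Z^{d'}$-region of linear size $\sim n^{d''/d'}$ of the unfolded $\T_F$ configuration. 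Consequently any word of length $k$ forbidden in $\Sigma$ that $\T_F$ detects in a strip of width $\varphi(k)$ is detected by $\T_{F'}$ in a strip of width $\sim\varphi(k)^{d'/d''}$, giving $\varphi_{F',\pi',d+d''\to d}\prec\varphi^{d'/d''}$.

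The main obstacle is the construction and analysis of the Hilbert-curve substitution itself: one must produce, in arbitrary dimension $d''\geq d'$, a substitutive scaffolding with inflation factor tuned so that inflated $d''$-cubes bijectively cover $d'$-blocks of the correct size $n^{d''/d'}$, and verify that both $H$ and $H^{-1}$ have uniformly bounded distortion so that the radius-$r$ rules of $F$ translate into a finite set of bounded-radius rules on $\T_{F'}$. When $d''=d'$ the statement is trivial (take $\T_{F'}=\T_F$), and for $d''>d'$ one can iterate a $d''$-dimensional Hilbert substitution to boost the inflation ratio until it matches the required rate. Once the scaffolding is in place, the layer superposition, simulation rules, and speed accounting follow the standard patterns used earlier in the paper.
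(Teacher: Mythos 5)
Your proposal follows essentially the same route as the paper: a Mozes-sofic Hilbert-curve substitution layer provides the folding of the $d'$-dimensional fibers into $d''$-dimensional ones, the forbidden patterns of $F$ are transferred along the curve, and the speed gain comes from a width-$n$ strip containing a contiguous curve segment of length of order $n^{d''/d'}$. The only imprecision is your claim of bounded distortion \emph{in both directions} (cells adjacent in the $d''$-grid can lie arbitrarily far apart along a Hilbert curve), but the construction only needs the curve-to-grid direction together with the self-similarity fact that a super-tile of order $n$ carries a contiguous segment of the curve, which is exactly what the paper uses.
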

\begin{proof}
Let $\Sigma\subset\A^{\Z}$ be an effective subshift. Consider $\T_F\subset\B^{\Z^{2}}$ a subshift of finite type with forbidden patterns $F$ and $\pi:\B\to\A$ be a factor map such that $\pi(\sa{\vect{e_1}\Z}{\T_F})=\Sigma$. Denote  $\varphi=\varphi_{F,\pi,2\to 1}$. One constructs $\T_{F'}\subset\B'^{\Z^{3}}$ a subshift of finite type with forbidden patterns $F'$ and $\pi':\B'\to\A$ a factor map such that $\sa{\vect{e_1}\Z}{\pi'(\T_{F'})}=\Sigma$ and $\varphi_{F',\pi',3\to 1}\in\Theta(\sqrt{\varphi})$. This prove the Theorem for $d=1$, $d'=1$ and $d''=2$.

\paragraph{Construction of a tangled grid}\  Consider the alphabet $\mathcal{C}$ formed by the three following tiles, their rotations and their symmetrized about to the axis, thus $\card(\mathcal{C})=3\times4\times2=24$.

\begin{center}
\begin{tikzpicture}[scale=0.8]
 \hilbertb{0}{0}{0}{1}
 \hilbertl{2}{0}{0}{1}
 \hilbertlr{4}{0}{0}{1}
\end{tikzpicture}
\end{center}

Define the following substitution on the alphabet $\mathcal{C}$ (modulo rotations and symmetries):

\begin{center}
\begin{tikzpicture}[scale=0.8]
\draw (-1,0) node {s:};
\begin{scope}[xshift=0cm]
\hilbertb{0}{0}{0}{1} 
\draw[->,>=latex] (1,0)--(2,0); 
\draw (1,0.1)--(1,-0.1);
\hilbertbtwo{3.5}{0}{0}{1}
\end{scope}

\begin{scope}[xshift=6cm]
\hilbertl{0}{0}{0}{1} 
\draw[->,>=latex] (1,0)--(2,0); 
\draw (1,0.1)--(1,-0.1);
\hilbertltwo{3.5}{0}{0}{1}
\end{scope}

\begin{scope}[xshift=12cm]
\hilbertlr{0}{0}{0}{1} 
\draw[->,>=latex] (1,0)--(2,0); 
\draw (1,0.1)--(1,-0.1);
\hilbertlrtwo{3.5}{0}{0}{1}
\end{scope}
\end{tikzpicture}
\end{center}

By iterating substitution $s$ on a letter $a\in\mathcal{C}$ (see Figure~\ref{figure:iterationS}), we construct for every $n\in\N$ the pattern $s^n(a)$ called the \define{super-tile of order $n$ and type $a$}. The substitutive subshift defined by \[\T_s=\left\{x\in\mathcal{C}^{\Z^2}: u\sqsubset x \textrm{ if there exists $n\in\N$ and $a\in\mathcal{C}$ which verifies }u\sqsubset s^n(a)\right\},\] is sofic according to Mozes' result~\cite{Mozes-1989}. That is to say there exist a subshift of finite type $\T_{F_s}\subset\mathcal{C}_s^{\Z^2}$ and a factor map $\pi_s:\mathcal{C}_s\to\mathcal{C}$ such that $\pi_s(\T_{F_s})=\T_{s}$. In the Mozes' construction, it is possible to consider a substitution $\overline{s}$ on $\mathcal{C}_s$ which factorizes on $s$ and verifies $\T_{F_s}=\T_{\overline{s}}$. Moreover in the Mozes' proof, forbidden patterns $F_s$ force every tile to be in a super-tile of order $1$, and then every super tile of order $n$ to be in a super tile of order $n+1$. Moreover, one just uses the fact that a pattern localy verifies the condition $F_s$ but not that the pattern is in a globally admissible configuration. Thus if $p\in\mathcal{C}_s^{[-k,k]^2}$ is a pattern which does not contain patterns of $F_s$, then the center letter $p_{\vect{0}}$ is in a super-tile of order $n$ such that $2^n\leq k<2^{n+1}$.

\figex{\begin{figure}[h!]
\begin{center}
\begin{tikzpicture}[scale=0.8]
\hilbertb{0}{0}{0}{1} 
\draw[->,>=latex] (1,0)--(2,0); 
\hilbertbtwo{3.5}{0}{0}{1}
\draw[->,>=latex] (5,0)--(6,0); 
\hilbertbthree{8.5}{0}{0}{1}
\draw[->,>=latex] (11,0)--(12,0); 

\begin{scope}[xshift=16.5cm]
\hilbertlrthree{-2}{-2}{-90}{-1}
\hilbertbthree{-2}{2}{0}{1}
\hilbertlthree{2}{2}{0}{1}
\hilbertlthree{2}{-2}{90}{-1}
\end{scope}
\end{tikzpicture}
\end{center}
\caption{Three iterations of $s$ on an element of $\mathcal{C}$.}\label{figure:iterationS}
\end{figure}
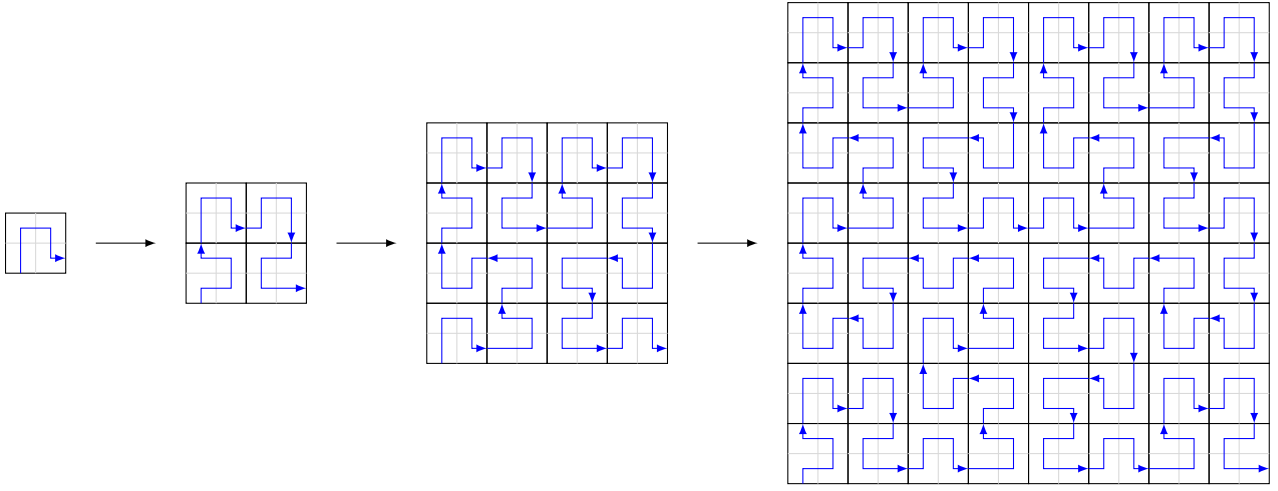}

\paragraph{Construction of a three-dimensional sofic subshift which realizes $\Sigma$.}\  For any configuration $x\in\T_s$, define the function $\textrm{Coor}_{x}:\Z\longrightarrow\Z^2$ such that for $i\in\Z$, $\textrm{Coor}_{x}(i)$ corresponds to the position $i$ starting from the cell $x_{\vect{0}}$ and following the arrows of the alphabet $\mathcal{C}$. 

Consider $\B'=\B\times\mathcal{C}_s$ and the factor maps $\pi_1:\B'\to\B$ and $\pi_2:\B'\to\mathcal{C}_s$ corresponding to the projection on each coordinates. Define the subshift of finite type $\T_{F'}\subset\B'^{\Z^3}$ such that the set of forbidden patterns $F'=F'_1\cup F'_2\cup F'_3$ contains the following patterns:
\begin{itemize}
\item $F'_1=\left\{\pi_2^{-1}(p) : p\in F_s\right\}$, these conditions ensure to have the tangled grid described above on the second layer on each slice $\{i\}\times\Z^2$ with $i\in\Z$;
\item $F'_2=\left\{p\in\B'^{\{(0,0,0),(1,0,0)\}} : \pi_2(p_{(0,0,0)})\ne \pi_2(p_{(1,0,0)})\right\}$, these conditions force each tangled grid to be similar by translation according to $\vect{e_1}$;
\item $F'_3=\{\ p'\in(\B\times\mathcal{C}_s)^{\ast} : \textrm{ there exist } p\in F \textrm{ and } x\in\T_s  \textrm{ such that } p_{(i,j)}=\pi_1(p')_{i,\textrm{Coor}_{x}(j)} \textrm{ and }x_{(i,j)}=\pi_s\circ\pi_2(p')_{i,\textrm{Coor}_{x}(j)} \textrm{ for all }(i,j)\in\supp(p)\}$, these conditions transfer the $2$-dimensional forbidden patterns of $F$ in $3$-dimensional forbidden patterns where the second coordinate is wrapped following the  tangled grid (see Figure~\ref{figure:WrappedPattern}).
\end{itemize}

By construction, if $z\in\T_{F'}$, there exists $y\in\T_F$ and $x\in\T_s$ such that for all $(i,j)\in\Z^2$, $z_{i,\textrm{Coor}_{x}(j)}=y_{(i,j)}$ and $\pi_s\circ \pi_2(\sa{\vect{e_2}\Z+\vect{e_3}\Z}{z})=x$. Reciprocally, for every $y\in\T_F$, there exists $z\in\T_{F'}$ and $x\in\T_s$ such that for all $(i,j)\in\Z^2$, $z_{i,\textrm{Coor}_{x}(j)}=y_{(i,j)}$ and $\pi_s\circ \pi_2(\sa{\vect{e_2}\Z+\vect{e_3}\Z}{z})=x$. Thus $\pi\circ \pi_1(\sa{\vect{e_1}\Z}{\T_{F'}})=\Sigma$.

\figex{
\begin{figure}[h!]
\begin{center}
\begin{tikzpicture}
\begin{scope}[xshift=-7cm,yshift=-4cm,scale=0.6]

\draw[->,>=latex] (0,5)--(1,5);
\draw[->,>=latex] (0,5)--(0,6);
\draw (0,6) node[above] {$\vect{e_1}$};
\draw  (1,5) node[right] {$\vect{e_2}$};

\foreach \x in {0,2,5}{
\fill[pattern=north west lines,opacity=0.7] (\x,0) rectangle (\x+1,3);}

\foreach \x in {1,3,4,6}{
\fill[gray!50,opacity=0.7] (\x,0) rectangle (\x+1,3);}

\foreach \x in {0,1,...,7}{
\foreach \y in {0,1,...,3}{
\draw(\x,\y) node{$\bullet$};
}}
\foreach \x in {0,1,...,7}{
\draw[dashed] (\x,-1)--(\x,4);}
\foreach \y in {0,1,...,3}{
\draw[dotted, very thick] (0,\y)--(7,\y);}
\foreach \x in {0,1,...,7}{
\draw[dashed] (\x,-1)--(\x,4);}

\draw(9,1.5) node {$\longmapsto$};
\draw (0,-1) node[below] {$\Sigma$};
\end{scope}
\begin{scope}[math3d,scale=0.7]
\begin{scope}[xshift=12cm]
\draw[->,>=latex] (0,0,0)--(0,0,1);
\draw (0,0,1) node[above] {$\vect{e_1}$};
\draw[->,>=latex] (0,0,0)--(1,0,0);
\draw (1,0,0) node[right] {$\vect{e_2}$};
\draw[->,>=latex] (0,0,0)--(0,1,0);
\draw (0,1,0) node[right] {$\vect{e_3}$};
\end{scope}
\foreach \i in{-6}{
\begin{scope}[yshift=\i cm]
\foreach \x in {0,1,...,8}{
\draw (\x,0,0)--(\x,8,0);
\draw (0,\x,0)--(8,\x,0);
}
\foreach \x in {0.5,1.5,...,7.5}{
\draw[gray!30] (\x,0,0)--(\x,8,0);
\draw[gray!30] (0,\x,0)--(8,\x,0);}

\foreach \x\y in {0/0,0/3,0/6,4/6}{
\draw[blue,->,>=latex](\x+0.25,\y,0)--(\x+0.25,\y+0.25,0)--(\x+0.75,\y+0.25,0)--(\x+0.75,\y+0.75,0)--(\x+0.25,\y+0.75,0)--(\x+0.25,\y+1,0);
}
\foreach \x\y in {0/1,2/2,0/4,0/7,2/7,4/7,6/7,6/1}{
\draw[blue,->,>=latex](\x+0.25,\y,0)--(\x+0.25,\y+0.75,0)--(\x+0.75,\y+0.75,0)--(\x+0.75,\y+0.25,0)--(\x+1,\y+0.25,0);
}
\foreach \x\y in {1/1,1/7,3/7,5/7,7/7,5/2,7/4,7/1}{
\draw[blue,->,>=latex](\x,\y+0.25,0)--(\x+0.25,\y+0.25,0)--(\x+0.25,\y+0.75,0)--(\x+0.75,\y+0.75,0)--(\x+0.75,\y,0);
}
\foreach \x\y in {1/0,4/0,4/2,7/0,2/4,6/4,1/6,5/6}{
\draw[blue,->,>=latex](\x+0.75,\y+1,0)--(\x+0.75,\y+0.75,0)--(\x+0.25,\y+0.75,0)--(\x+0.25,\y+0.25,0)--(\x+1,\y+0.25,0);
}
\foreach \x\y in {2/0,3/4,4/4,5/0}{
\draw[blue,->,>=latex](\x,\y+0.25,0)--(\x+0.25,\y+0.25,0)--(\x+0.25,\y+0.75,0)--(\x+0.75,\y+0.75,0)--(\x+0.75,\y+0.25,0)--(\x+1,\y+0.25,0);
}
\foreach \x\y in {3/0,1/4,2/6,5/4,3/2,6/6,6/0}{
\draw[blue,->,>=latex](\x,\y+0.25,0)--(\x+0.75,\y+0.25,0)--(\x+0.75,\y+0.75,0)--(\x+0.25,\y+0.75,0)--(\x+0.25,\y+1,0);
}
\foreach \x\y in {3/1,1/5,3/3}{
\draw[blue,->,>=latex](\x+0.25,\y,0)--(\x+0.25,\y+0.25,0)--(\x+0.75,\y+0.25,0)--(\x+0.75,\y+0.75,0)--(\x,\y+0.75,0);
}
\foreach \x\y in {2/1,0/2,0/5,4/5,6/2}{
\draw[blue,->,>=latex](\x+1,\y+0.75,0)--(\x+0.75,\y+0.75,0)--(\x+0.75,\y+0.25,0)--(\x+0.25,\y+0.25,0)--(\x+0.25,\y+1,0);
}
\foreach \x\y in {2/3,5/3}{
\draw[blue,->,>=latex](\x+1,\y+0.75,0)--(\x+0.75,\y+0.75,0)--(\x+0.75,\y+0.25,0)--(\x+0.25,\y+0.25,0)--(\x+0.25,\y+0.75,0)--(\x,\y+0.75,0);
}
\foreach \x\y in {1/3,2/5,6/5,4/3,4/1}{
\draw[blue,->,>=latex](\x+1,\y+0.75,0)--(\x+0.25,\y+0.75,0)--(\x+0.25,\y+0.25,0)--(\x+0.75,\y+0.25,0)--(\x+0.75,\y,0);
}
\foreach \x\y in {1/2,3/5,7/5,7/2,5/1}{
\draw[blue,->,>=latex](\x+0.75,\y+1,0)--(\x+0.75,\y+0.25,0)--(\x+0.25,\y+0.25,0)--(\x+0.25,\y+0.75,0)--(\x,\y+0.75,0);
}
\foreach \x\y in {3/6,7/6,7/3}{
\draw[blue,->,>=latex](\x+0.75,\y+1,0)--(\x+0.75,\y+0.75,0)--(\x+0.25,\y+0.75,0)--(\x+0.25,\y+0.25,0)--(\x+0.75,\y+0.25,0)--(\x+0.75,\y,0);
}
\foreach \x\y in {5/5,6/3}{
\draw[blue,->,>=latex](\x+0.25,\y,0)--(\x+0.25,\y+0.25,0)--(\x+0.75,\y+0.25,0)--(\x+0.75,\y+0.75,0)--(\x,\y+0.75,0);
}

\foreach \x\y in {0/0,0/3,0/6,4/6}{
\draw[blue,->,>=latex](\x+0.25,\y,0)--(\x+0.25,\y+0.25,0)--(\x+0.75,\y+0.25,0)--(\x+0.75,\y+0.75,0)--(\x+0.25,\y+0.75,0)--(\x+0.25,\y+1,0);
}
\end{scope}}
\fill[gray,opacity=0.4] (3.5,1.5,0)--(2.5,1.5,0)--(2.5,1.5,-6)--(3.5,1.5,-6)--cycle;

\fill[gray,pattern=north west lines,opacity=0.4] (0.5,0.5,0)--(0.5,1.5,0)--(0.5,1.5,-6)--(0.5,0.5,-6)--cycle;

\fill[gray,opacity=0.4](0.5,1.5,0)--(1.5,1.5,0)--(1.5,1.5,-6)--(0.5,1.5,-6)--cycle;

\fill[gray,pattern=north east lines,opacity=0.4](1.5,1.5,0)--(1.5,0.5,0)--(1.5,0.5,-6)--(1.5,1.5,-6)--cycle;

\fill[gray,opacity=0.4](1.5,0.5,0)--(2.5,0.5,0)--(2.5,0.5,-6)--(1.5,0.5,-6)--cycle;

\fill[gray,opacity=0.4] (2.5,0.5,0)--(3.5,0.5,0)--(3.5,0.5,-6)--(2.5,0.5,-6)--cycle;

\fill[gray,pattern=north west lines,opacity=0.4] (3.5,0.5,0)--(3.5,1.5,0)--(3.5,1.5,-6)--(3.5,0.5,-6)--cycle;

\foreach \a in {0.5,1.5,...,3.5}{
\foreach \b in {0.5,1.5}{
\draw[dashed] (\a,\b,2)--(\a,\b,-8);}}

\foreach \i in{0}{
\begin{scope}[yshift=\i cm]
\foreach \x in {0,1,...,8}{
\draw (\x,0,0)--(\x,8,0);
\draw (0,\x,0)--(8,\x,0);
}
\foreach \x in {0.5,1.5,...,7.5}{
\draw[gray!30] (\x,0,0)--(\x,8,0);
\draw[gray!30] (0,\x,0)--(8,\x,0);}

\foreach \x\y in {0/0,0/3,0/6,4/6}{
\draw[blue,->,>=latex](\x+0.25,\y,0)--(\x+0.25,\y+0.25,0)--(\x+0.75,\y+0.25,0)--(\x+0.75,\y+0.75,0)--(\x+0.25,\y+0.75,0)--(\x+0.25,\y+1,0);
}
\foreach \x\y in {0/1,2/2,0/4,0/7,2/7,4/7,6/7,6/1}{
\draw[blue,->,>=latex](\x+0.25,\y,0)--(\x+0.25,\y+0.75,0)--(\x+0.75,\y+0.75,0)--(\x+0.75,\y+0.25,0)--(\x+1,\y+0.25,0);
}
\foreach \x\y in {1/1,1/7,3/7,5/7,7/7,5/2,7/4,7/1}{
\draw[blue,->,>=latex](\x,\y+0.25,0)--(\x+0.25,\y+0.25,0)--(\x+0.25,\y+0.75,0)--(\x+0.75,\y+0.75,0)--(\x+0.75,\y,0);
}
\foreach \x\y in {1/0,4/0,4/2,7/0,2/4,6/4,1/6,5/6}{
\draw[blue,->,>=latex](\x+0.75,\y+1,0)--(\x+0.75,\y+0.75,0)--(\x+0.25,\y+0.75,0)--(\x+0.25,\y+0.25,0)--(\x+1,\y+0.25,0);
}
\foreach \x\y in {2/0,3/4,4/4,5/0}{
\draw[blue,->,>=latex](\x,\y+0.25,0)--(\x+0.25,\y+0.25,0)--(\x+0.25,\y+0.75,0)--(\x+0.75,\y+0.75,0)--(\x+0.75,\y+0.25,0)--(\x+1,\y+0.25,0);
}
\foreach \x\y in {3/0,1/4,2/6,5/4,3/2,6/6,6/0}{
\draw[blue,->,>=latex](\x,\y+0.25,0)--(\x+0.75,\y+0.25,0)--(\x+0.75,\y+0.75,0)--(\x+0.25,\y+0.75,0)--(\x+0.25,\y+1,0);
}
\foreach \x\y in {3/1,1/5,3/3}{
\draw[blue,->,>=latex](\x+0.25,\y,0)--(\x+0.25,\y+0.25,0)--(\x+0.75,\y+0.25,0)--(\x+0.75,\y+0.75,0)--(\x,\y+0.75,0);
}
\foreach \x\y in {2/1,0/2,0/5,4/5,6/2}{
\draw[blue,->,>=latex](\x+1,\y+0.75,0)--(\x+0.75,\y+0.75,0)--(\x+0.75,\y+0.25,0)--(\x+0.25,\y+0.25,0)--(\x+0.25,\y+1,0);
}
\foreach \x\y in {2/3,5/3}{
\draw[blue,->,>=latex](\x+1,\y+0.75,0)--(\x+0.75,\y+0.75,0)--(\x+0.75,\y+0.25,0)--(\x+0.25,\y+0.25,0)--(\x+0.25,\y+0.75,0)--(\x,\y+0.75,0);
}
\foreach \x\y in {1/3,2/5,6/5,4/3,4/1}{
\draw[blue,->,>=latex](\x+1,\y+0.75,0)--(\x+0.25,\y+0.75,0)--(\x+0.25,\y+0.25,0)--(\x+0.75,\y+0.25,0)--(\x+0.75,\y,0);
}
\foreach \x\y in {1/2,3/5,7/5,7/2,5/1}{
\draw[blue,->,>=latex](\x+0.75,\y+1,0)--(\x+0.75,\y+0.25,0)--(\x+0.25,\y+0.25,0)--(\x+0.25,\y+0.75,0)--(\x,\y+0.75,0);
}
\foreach \x\y in {3/6,7/6,7/3}{
\draw[blue,->,>=latex](\x+0.75,\y+1,0)--(\x+0.75,\y+0.75,0)--(\x+0.25,\y+0.75,0)--(\x+0.25,\y+0.25,0)--(\x+0.75,\y+0.25,0)--(\x+0.75,\y,0);
}
\foreach \x\y in {5/5,6/3}{
\draw[blue,->,>=latex](\x+0.25,\y,0)--(\x+0.25,\y+0.25,0)--(\x+0.75,\y+0.25,0)--(\x+0.75,\y+0.75,0)--(\x,\y+0.75,0);
}

\foreach \x\y in {0/0,0/3,0/6,4/6}{
\draw[blue,->,>=latex](\x+0.25,\y,0)--(\x+0.25,\y+0.25,0)--(\x+0.75,\y+0.25,0)--(\x+0.75,\y+0.75,0)--(\x+0.25,\y+0.75,0)--(\x+0.25,\y+1,0);
}
\end{scope}}
\foreach \z in {0,-2,-4,-6}{
\foreach \a in {0.5,1.5,...,3.5}{
\foreach \b in {0.5,1.5}{
\draw (\a,\b,\z) node{$\bullet$};
\draw[dotted,very thick] (0.5,0.5,\z)--(0.5,1.5,\z)--(1.5,1.5,\z)--(1.5,0.5,\z)--(2.5,0.5,\z)--(3.5,0.5,\z)--(3.5,1.5,\z)--(2.5,1.5,\z);
}}}
\draw (0.5,0.5,-8) node[below] {$\Sigma$};

\end{scope}
\draw(6,-6) node {Pattern of $\T_{F'}$};
\draw(-4.5,-6) node {Pattern of $\T_{F}$};

\end{tikzpicture}
\end{center}
\caption{Pattern of $\T_F$ wrapped following the tangled grid in a pattern of $\T_{F'}$.}\label{figure:WrappedPattern}
\end{figure}
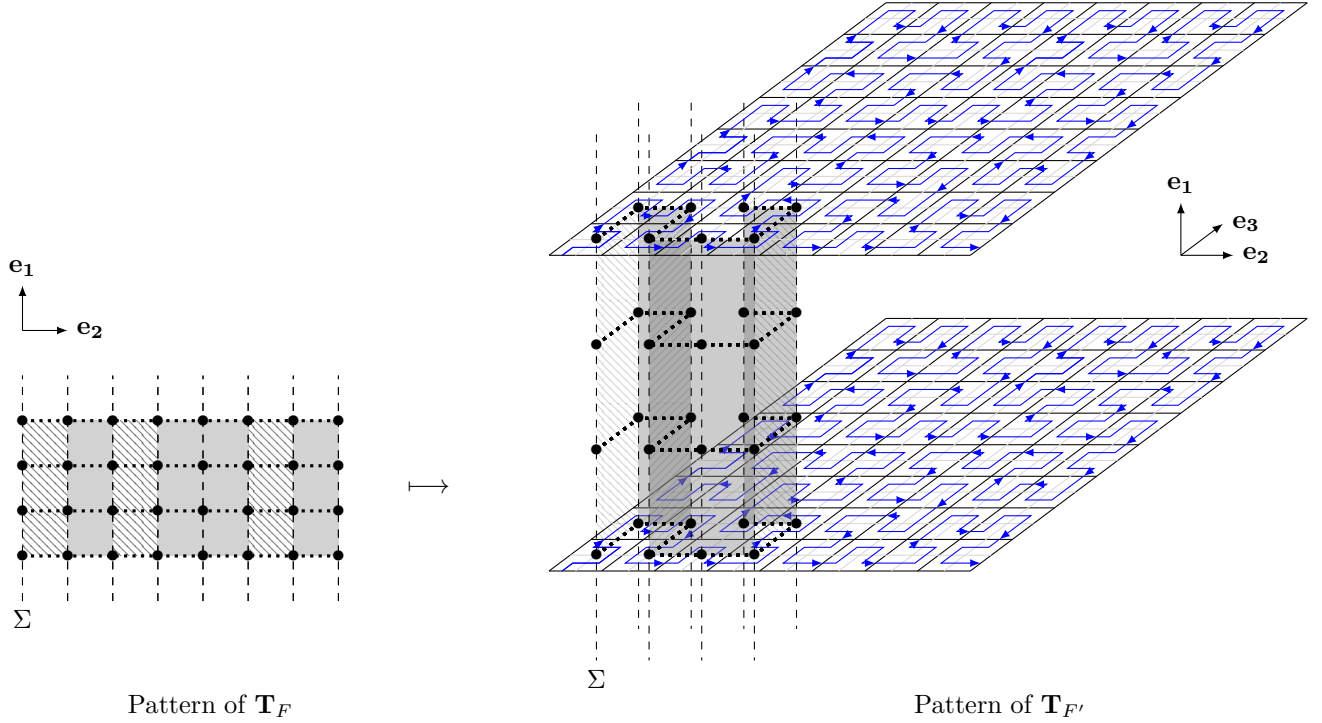}

\paragraph{$\pi'(\T_{F'})$ has the expected speed of convergence.}\  By definition of the speed of convergence, for any $u\in\A^k$, if $u\notin\Sigma$ then $u\notin\Lang\left(\pi'\left(\sa{\vect{e_1}\Z}{\T^{\varphi(k),2\to1}_{F}}\right)\right)$. 

Let $z\in\sa{\vect{e_1}\Z}{\T^{2\left\lceil\sqrt{\varphi(k)}\right\rceil,3\to1}_{F'}}$ and consider the pattern $p=\pi_s'(z)_{\{0\}\times\left[-2\left\lceil\sqrt{\varphi(k)}\right\rceil,2\left\lceil\sqrt{\varphi(k)}\right\rceil\right]^2}$. Since the condition $F_s$ is verified on $z_{\{0\}\times\left[-2\left\lceil\sqrt{\varphi(k)}\right\rceil,2\left\lceil\sqrt{\varphi(k)}\right\rceil\right]^2}$, we deduce that $z_{\vect{0}}$ is included in a super-tile of order $n=\left\lfloor\log_2\left(\left\lceil\sqrt{\varphi(k)}\right\rceil\right)\right\rfloor$, that is to say there exist $c\in\mathcal{C}$ and $\vect{i}\in\Z^3$ such that $s^n(c)=\pi_s\circ\pi_2(z)_{\vect{i}+\{0\}\times[1,2^n]^2}$ and $\vect{0}\in\vect{i}+\{0\}\times[1,2^n]^2$. Moreover for $\vect{j}\in\{(0,0,0),(0,2^n,0),(0,-2^n,0),(0,0,2^n),(0,0,-2^n)\}$, the paterns $\pi_s'(z)_{\vect{i}+\vect{j}+\{0\}\times[1,2^n]^2}$ are well assembled super-tiles of order $n$, that is to say there exists $x\in\T_s$ such that $\pi_s\circ\pi_2(z)_{\mathbb{U}}\sqsubset x$ where $\mathbb{U}=\vect{i}+\{(0,0,0),(0,2^n,0),(0,-2^n,0),(0,0,2^n),(0,0,-2^n)\}+\{0\}\times[1,2^n]^2$.

One deduces that $\pi_s\circ\pi_2(z)_0$ is in the center of a segment of blue line of amplitude $\left(2^{\left\lfloor\log_2\left(\left\lceil\sqrt{\varphi(k)}\right\rceil\right)\right\rfloor}\right)^2$. According to the conditions imposed by $F'_3$, there exists $y\in\sa{\vect{e_1}\Z}{\T^{\varphi(k),2\to1}_{F}}$ such that $y_{(i,j)}=z_{(i,\textrm{Coor}_x(j))}$ for all $i\in\Z\times[-\varphi(k),\varphi(k)]$. Thus $u\notin\Lang\left({\T^{2\left\lceil\sqrt{\varphi(k)}\right\rceil,3\to1}_{F'}}\right)$ that is to say $\varphi_{F',\pi',3\to1}\prec\sqrt{\varphi}$.  

In the same way the reverse holds and so $\varphi_{F',\pi',3\to1}\succ\sqrt{\varphi}$.

The same proof holds when we increase the dimension $d$ of $\Sigma$ since we do not use explicitly the direction $\vect{e_1}$. To increase the co-dimension $d''$ it is sufficient to generalize the $2\times2$ substitution in a $d''$-dimensional substitution which codes an Hilbert's curve covering a $d'$-dimensional vector space. Finally to increase the co-dimension $d'$ of the initial sofic which simulates $\Sigma$, one uses a substitution which codes an Hilbert's hyper surface of dimension $d'$ which cover a $d'$-dimensional vector space.
\end{proof}

%

\section{Lower bounds for the speed of convergence of a subshift}\label{section:LowerBound}

In this section we exhibit general principles to know whether the bound obtained by a given construction is sharp. 

\subsection{Combinatory lower bounds}

\begin{definition}
Let $\Sigma$ be a one dimensional subshift and let $u\in\A^{\ast}$, the \define{folder set of word of size $k$} of $u$ is $\folder^k_{\Sigma}(u)=\{v\in\Lang_k(\Sigma) : uv\in\Lang(\Sigma)\}$. 
\end{definition}

\begin{remark}
If $u\notin\Lang(\Sigma)$ then $\folder^k_{\Sigma}(u)=\emptyset$. Moreover one has $\card(\{\folder^{k_2}_{\Sigma}(u) : u\in\A^{k_1}\})\leq \card(\A)^{k_1}$. 
\end{remark}

\begin{theorem}\label{theorem:Folower}
Let $\Sigma\subset\A^{\Z}$ be an one dimensional effective subshift and  $\varphi\in\cF^{\sofic}_{\Sigma,d}$. Then there exists a constant $M$ such that for all $k_1,k_2\in\N$ one has: 
\[M\varphi_{F,\pi,d\to1}(k_1+k_2)\geq\left(\log(\card(\{\folder^{k_2}_{\Sigma}(u) : u\in\A^{k_1}\}))\right)^{\frac{1}{d-1}}.\]
\end{theorem}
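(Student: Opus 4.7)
The plan is to pick a sofic realisation of $\Sigma$, use the defining property of the speed of convergence at $n=\varphi(k_1+k_2)$ to equate the length-$(k_1+k_2)$ languages of $\Sigma$ and of the width-$n$ approximation, and then encode each $k_2$-follower set by a \emph{transfer state} in the one-dimensional strip SFT, whose number is controlled by the volume $(2n+1)^{d-1}$ of the cross-section of the strip.

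Fix a SFT $\T_F\subset\B^{\Z^d}$ and a letter-to-letter factor $\pi:\B\to\A$ with $\Sigma=\sa{1}{\pi(\T_F)}$ and $\varphi_{F,\pi,d\to 1}\sim\varphi$, and let $r$ be the radius of $F$. Then $\T_F^{n,d\to 1}$ is a one-dimensional SFT on the alphabet $\A_n':=\B^{[-n,n]^{d-1}}$, of cardinality $\card(\B)^{(2n+1)^{d-1}}$ and of order bounded by some constant $r'$ depending only on $r$. Setting $n:=\varphi_{F,\pi,d\to 1}(k_1+k_2)$, the definition of the speed of convergence gives
\[\Lang_{k_1+k_2}(\Sigma)=\Lang_{k_1+k_2}\bigl(\sa{1}{\pi(\T_F^{n,d\to 1})}\bigr),\]
hence $\folder^{k_2}_{\Sigma}(u)=\folder^{k_2}_{\sa{1}{\pi(\T_F^{n,d\to 1})}}(u)$ for every $u\in\A^{k_1}$.

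To each $u\in\A^{k_1}$ I associate its set of admissible right-boundary "transfer states"
\[R(u):=\bigl\{(\tilde w_{k_1-r'},\ldots,\tilde w_{k_1-1})\in(\A_n')^{r'} \ :\ \tilde w\in\Lang_{k_1}(\T_F^{n,d\to 1}),\ \pi\circ\proj{\vect{0}}(\tilde w)=u\bigr\}.\]
By the Markov property of the one-dimensional SFT $\T_F^{n,d\to 1}$, a word $v\in\A^{k_2}$ lies in $\folder^{k_2}_{\Sigma}(u)$ exactly when some $\tau\in R(u)$ admits a length-$k_2$ extension in $\T_F^{n,d\to 1}$ whose central $\pi$-projection is $v$. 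Thus $\folder^{k_2}_{\Sigma}(u)$ is entirely determined by $R(u)$, giving the key inequality
\[\card\bigl(\{\folder^{k_2}_{\Sigma}(u):u\in\A^{k_1}\}\bigr)\leq\card\bigl(\{R(u):u\in\A^{k_1}\}\bigr).\]
A bound of the form $\card\{R(u):u\in\A^{k_1}\}\leq\exp\bigl(C\,(2n+1)^{d-1}\bigr)$ then yields, after taking logarithms and absorbing constants into $M$ (with Corollary~\ref{cor:ConjugaisonSofic} handling the switch between equivalent sofic presentations), the required inequality $M\,\varphi_{F,\pi,d\to 1}(k_1+k_2)\geq\bigl(\log\card(\{\folder^{k_2}_{\Sigma}(u):u\in\A^{k_1}\})\bigr)^{1/(d-1)}$.

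The main obstacle is establishing the bound $\card\{R(u):u\in\A^{k_1}\}\leq\exp(O(n^{d-1}))$. The naive "all subsets of $(\A_n')^{r'}$" estimate gives only the doubly-exponential $2^{\card(\B)^{r'(2n+1)^{d-1}}}$, which would yield only the weaker $(\log\log\#\folder)^{1/(d-1)}$ lower bound. Recovering the stated single-logarithm exponent requires exploiting that the $R(u)$'s are not arbitrary subsets but images of an initial state set under the sequence of $\A$-labelled transitions of the cover SFT: tracking the incremental change $R(u)\mapsto R(ua)$ as one appends a letter $a\in\A$ and using that each transition is constrained by forbidden patterns of $F$ of bounded radius should cut the exponent from $\card(\B)^{r'(2n+1)^{d-1}}$ down to $O((2n+1)^{d-1})$, which is the step where the combinatorial structure of the strip SFT cross-section must be used in earnest.
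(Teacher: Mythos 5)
Your setup is the same as the paper's: pass to the strip $\T_F^{n,d\to1}$ with $n=\varphi_{F,\pi,d\to1}(k_1+k_2)$, use the equality of the length-$(k_1+k_2)$ languages to rewrite $\folder^{k_2}_{\Sigma}(u)$ inside the strip, and bound the number of follower sets by boundary data living on a cross-section of size $r(2n+1)^{d-1}$. The divergence is in the very last step, and it is exactly where your argument stops being a proof. The paper asserts that the single column $x_{[-r,-1]\times[-\varphi(k_1+k_2),\varphi(k_1+k_2)]^{d-1}}$ of a configuration realizing $u$ already determines $\folder^{k_2}_{\Sigma}(u)$, and hence bounds $\card(\{\folder^{k_2}_{\Sigma}(u):u\in\A^{k_1}\})$ by the number of such columns, namely $\card(\B)^{r(2\varphi(k_1+k_2))^{d-1}}$; one logarithm then gives the stated inequality. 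You instead observe, correctly, that $\folder^{k_2}_{\Sigma}(u)$ is the union $\bigcup_{\tau\in R(u)}G(\tau)$ over the whole set $R(u)$ of reachable boundary columns, so that a priori one only gets $\card(\{\folder^{k_2}_{\Sigma}(u)\})\leq\card(\{R(u)\})\leq 2^{\card(\B)^{r'(2n+1)^{d-1}}}$, which after taking logarithms yields only the $\left(\log\log\right)^{1/(d-1)}$ lower bound.

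Your closing paragraph does not close this gap: ``tracking the incremental change $R(u)\mapsto R(ua)$ \dots should cut the exponent'' is a hope, not an argument, and it is far from clear it can succeed --- the map $u\mapsto R(u)$ is precisely the subset construction on the transfer NFA, and for general NFAs both the number of reachable subsets and the number of distinct unions $\bigcup_{\tau\in R}G(\tau)$ can be exponential in the number of states. So as written your proposal establishes only the weaker double-logarithm version of the theorem. To recover the stated bound you would need to justify the paper's stronger claim, namely that any single $\tau\in R(u)$ determines $\folder^{k_2}_{\Sigma}(u)$ (equivalently, that $R(u)\cap R(u')\neq\emptyset$ forces $\folder^{k_2}_{\Sigma}(u)=\folder^{k_2}_{\Sigma}(u')$). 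Be aware that the paper's own one-sentence justification of this point does not address the union-over-$R(u)$ issue you raise; you have in fact isolated the step of the published argument that requires the most care, but identifying a difficulty is not the same as resolving it.
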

\begin{proof}
Let $\T_F\subset\B^{\Z^d}$ be a subshift of finite type of forbidden patterns $F$ and a morphism $\pi:\A^{\Z^d}\to\B^{\Z^d}$ such that $\Sigma=\sa{1}{\pi(\T_F)}$ and $\varphi=\varphi_{F,\pi,d\to1}$. For $u\in\Lang_{k_1}(\Sigma)$, one has
\[\folder^{k_2}_{\Sigma}(u)= \left\{\sa{1}{\pi(x)}_{[0,k_2-1]}\in\A^{k_2}:x\in \T^{\varphi(k_1+k_2),d\to1}_F \textrm{ such that  }\sa{1}{\pi(x)}_{[-k_1,-1]}=u\right\}.\]

Let $r$ such that every support of pattern of $F$ is included in $[0,r-1]^d$ modulo a translation. For $x\in\T^{\varphi(k_1+k_2),d\to1}_F$  such that  $\sa{1}{\pi(x)}_{[-k_1,-1]}=u\in\A^{k_1}$, the knowledge of $x_{[-r,-1]\times[-\varphi(k_1+k_2),\varphi(k_1+k_2)]^{d-1}}$ is suffisent to determine which set of $\{\folder^{k_2}_{\Sigma}(u'):u'\in\A^{k_1}\}$ is allowed to complete $u\in\A^{k_1}$ by a word $v\in\A^{k_2}$ such that $uv\in\Lang_{k_1+k_2}(\sa{1}{\pi(\T^{\varphi(k_1+k_2),d\to1}_F})=\Lang_{k_1+k_2}(\Sigma)$ (see Figure~\ref{figure:follower}). Thus
\[\card(\{\folder^{k_2}_{\Sigma}(u) : u\in\A^{k_1}\}))\leq \card\left(\left\{x_{[-r,-1]\times[-\varphi(k_1+k_2),\varphi(k_1+k_2)]^{d-1}}:x\in\T^{\varphi(k_1+k_2),d\to1}_F\right\}\right)\leq\B^{r(2\varphi(k_1+k_2))^{d-1}}.\]

The result follows by taking the logarithm.
\figex{
\begin{figure}[h!]
\begin{center}
\begin{tikzpicture}[math3dfig,scale=1.2]

\fill[gray!10!white] (-0.5,-3,3)--(-0.5,3,3)--(-0.5,3,-3)--(0,3,-3)--(0,-3,-3)--(0,-3,3)--cycle;
\draw[thick] (-0.5,-3,3)--(-0.5,3,3)--(-0.5,3,-3);
\draw[thick] (-0.5,-3,3)--(0,-3,3);
\draw[thick] (-0.5,3,3)--(0,3,3);
\draw[thick] (-0.5,3,-3)--(0,3,-3);
\draw[thick,fill=gray!10!white] (0,-3,-3)--(0,-3,3)--(0,3,3)--(0,3,-3)--cycle;
\foreach \x in {-2.9,-2.8,...,2.9}{
\draw[gray!30,very thin] (0,\x,-3)--(0,\x,3)--(-0.5,\x,3);
\draw[gray!30,very thin] (0,-3,\x)--(0,3,\x)--(-0.5,3,\x);
}
\foreach \x in {-0.4,-0.3,...,-0.1}{
\draw[gray!30,very thin] (\x,3,-3)--(\x,3,3);
\draw[gray!30,very thin] (\x,-3,3)--(\x,3,3);
}

\draw[->,>=latex] (0,0,0)--(0,0,3);
\draw (0,0,1.5) node[right] {$\varphi(k_1+k_2)$};

\draw[<->,>=latex] (0,3,-3.2)--(-0.5,3,-3.2);
\draw (-0.25,3,-3.2) node[below] {$r$};

\draw[<->,>=latex] (0,0,-.2)--(-3.8,0,-.2);
\draw (-1.6,0,-.2) node[below] {$k_1$};

\draw[<->,>=latex] (0,0,-.2)--(3,0,-.2);
\draw (1.5,0,-.2) node[below] {$k_2$};

\draw (0,0.1,0.1)--(0,0,0.1)--(5,0,0.1)--(5,0.1,0.1)--(0,0.1,0.1)--(0,0.1,0)--(5,0.1,0)--(5,0.1,0.1)--(5,0,0.1)--(5,0,0)--(5,0.1,0);
\draw[dotted] (0,0,0.1)--(-5,0,0.1)--(-5,0.1,0.1)--(0,0.1,0.1)--(0,0.1,0)--(-5,0.1,0);
\foreach \x in {0,0.1,...,5}{
\draw (\x,0.1,0)--(\x,0.1,0.1)--(\x,0,0.1);}
\draw(4,0,0) node[below] {$\Sigma$};

\end{tikzpicture}
\end{center}
\caption{The knowledge of $x_{[-r,-1]\times[-\varphi(k_1+k_2),\varphi(k_1+k_2)]^{d-1}}$ suffices to determine which set of $\left\{\folder^{k_2}_{\Sigma}(u'):u'\in\A^{k_1}\right\}$ is allowed to complete $u\in\A^{k_1}$ by a word $v\in\A^{k_2}$ such that $uv\in\Lang_{k_1+k_2}(\sa{1}{\pi(\T_F^{\varphi(k_1+k_2),d\to1}})=\Lang_{k_1+k_2}(\Sigma)$.}\label{figure:follower}
\end{figure}}
\end{proof}

\subsection{Computational lower bounds}

In this subsection we exhibit lower bounds for the speed of convergence of an effective subshift relatively to the algorithmic complexity. 

\begin{theorem}\label{theorem.ComputationalBounds}
Let $\Sigma\subset\A^{\Z}$ be an one dimensional effective subshift and  $\varphi\in\cF^{\sofic}_{\Sigma,d}$. There exists a Turing machine $\TM$ which halts only on the entry $\Lang_{\textrm{rect}}(\Sigma)^c$ such that
\begin{itemize}
\item $\max(\log,(\varphi_{F,\pi,d\to 1})^{d-1})\succ\log\circ\Dtime_{\TM}$;
\item $(\varphi_{F,\pi,d\to 1})^{d-1}\succ\Dspace_{\TM}$.
\end{itemize}
\end{theorem}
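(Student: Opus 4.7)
The plan is to construct a single semi\nobreakdash-decision procedure $\TM$ that, on input $u\in\A^{\ast}$ of length $k$ (placed on an auxiliary input tape, so that only working space is counted in $\Dspace_{\TM}$), increments $n=1,2,3,\dots$ on its working tape and, at each value of $n$, tests whether $u\notin\Lang\bigl(\sa{1}{\pi(\T_F^{n,d\to 1})}\bigr)$, halting as soon as this test succeeds. Correctness of the halting set is immediate from the definition of $\varphi:=\varphi_{F,\pi,d\to 1}$: the languages $L_n:=\Lang\bigl(\sa{1}{\pi(\T_F^{n,d\to 1})}\bigr)$ decrease with $n$, satisfy $L_n\supseteq\Lang(\Sigma)$, and by definition coincide with $\Lang(\Sigma)$ on words of length~$k$ as soon as $n\geq\varphi(k)$. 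Hence $\TM$ halts exactly on inputs $u\notin\Lang_{\textrm{rect}}(\Sigma)$, and moreover halts by iteration $n=\varphi(k)$.

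The test at width $n$ is implemented using the sofic presentation of $\sa{1}{\pi(\T_F^{n,d\to 1})}$. The approximation $\T_F^{n,d\to 1}$ is a $1$-dimensional SFT over the column alphabet $\B^{(2n+1)^{d-1}}$, so its $\pi$-factor admits a labelled graph presentation $\mathcal{G}_n$ whose vertices are the locally $F$-admissible ``windows'' of $r$ consecutive columns (where $r$ is the radius of $F$), whose edges are the compatible transitions, and whose edge labels are the $\pi$-images of the central coordinate. The vertex set has cardinality $N_n=|\B|^{O(n^{d-1})}$, but each individual vertex can be stored in $O(n^{d-1})$ bits. The condition $u\notin L_n$ then means that no bi-infinite path of $\mathcal{G}_n$ carries $u$ as a finite sub-label, equivalently (by a standard pumping argument) that no valid column-pattern of length $k+2N_n$ has central row projecting to an extension of $u$.

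The search for such a witness can be performed column-by-column while retaining only a rolling window of the last $r$ columns and a few counters, for a total working space of $O(r\cdot n^{d-1})=O(n^{d-1})$ and running time $\exp(O(n^{d-1}))\cdot k^{O(1)}$ (the latter coming from exhausting all choices of next column at each of the $k+2N_n$ steps). Specializing to $n=\varphi(k)$ and summing over $n\leq\varphi(k)$ preserves these orders of magnitude, giving
\[
\Dspace_{\TM}(k)=O(\varphi(k)^{d-1}),\qquad
\log\Dtime_{\TM}(k)=O\bigl(\max(\log k,\,\varphi(k)^{d-1})\bigr),
\]
which is exactly what the two $\prec$-inequalities in the statement require.

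The main obstacle is the precise control of the deterministic space at $O(n^{d-1})$ rather than the $O(n^{2(d-1)})$ one would obtain by naively reducing the test to NFA emptiness in $\mathcal{G}_n$ and applying Savitch's theorem. Avoiding the Savitch blow-up forces one to exploit the locality of $F$: admissibility of a column configuration depends only on the last $r$ columns, so a column-streaming enumeration backed by a pumping-type length bound $k+2N_n$ on witness patterns is enough, without ever materializing reachable-set information of size $\exp(n^{d-1})$. Checking carefully that this streaming enumeration, together with the bi-extensibility test on its endpoints, halts on exactly the complement of $\Lang(\Sigma)$ and fits in $O(n^{d-1})$ working cells is the main technical point of the proof.
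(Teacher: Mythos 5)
Your overall strategy is the same as the paper's: iterate over widths $n$, view $\T_F^{n,d\to 1}$ as a one-dimensional SFT over the column alphabet $\B^{\BB_n}$, and decide $u\in\Lang(\sa{1}{\pi(\T_F^{n,d\to 1})})$ through the associated graph of $r$-column windows; the correctness of the halting set and the pumping reduction of membership to the existence of a locally admissible column word of length $k+2N_n$ are fine. The genuine gap is in the space bound, which you yourself flag as ``the main technical point'' but do not carry out. The claim that the witness search ``can be performed column-by-column while retaining only a rolling window of the last $r$ columns and a few counters'' describes a \emph{nondeterministic} procedure; to determinize it you must either enumerate the candidate witnesses or backtrack through the choice tree, and in both cases the state to be retained is not one window but (an index into) a column word of length $k+2N_n$ with $N_n=\card(\B)^{\Theta(n^{d-1})}$, i.e.\ $2^{\Omega(n^{d-1})}$ bits --- even a lexicographic counter over the candidates already exceeds the budget. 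The same exhaustive procedure also runs in doubly exponential time, so it does not deliver your stated time bound either; the singly exponential time bound requires the explicit graph construction (exponential space), so one either needs two different machines, or the observation that a halting machine in space $S$ with read-only input of length $k$ runs in time $2^{O(S)}k^{O(1)}$, which you do not make. Savitch's theorem, as you correctly note, only yields deterministic space $O(n^{2(d-1)})$, which is not $\prec\varphi^{d-1}$, and nothing in your write-up replaces it.

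The paper handles this step by a different (admittedly terse) device: rather than streaming over all column words, it enumerates candidate orbits anchored at a window of $(\B^{\BB_n})^r$, ordered lexicographically, so that the only datum retained between candidates is that single window of size $r(2n)^{d-1}$. Whatever one thinks of the rigor of that argument, it is exactly the ingredient your proposal lacks: a way of indexing the exhaustive deterministic search by an object of size $O(n^{d-1})$ rather than by the witness pattern itself. As it stands, your space estimate is an assertion, not a proof.
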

\begin{remark}
Since $\Lang_{\textrm{rect}}(\Sigma)^c$ is not necessary recursive, $\Dtime_{\TM}$ and $\Dspace_{\TM}$ are not necessary computable.
\end{remark}
\begin{proof}
Let $\T_F\subset\B^{\Z^d}$ be a subshift of finite type of forbidden patterns $F$, where the maximal size patterns of $F$ is $r$, and a morphism $\pi:\A^{\Z^d}\to\B^{\Z^d}$ such that $\Sigma=\sa{\vect{e_1}}{\pi(\T_F)}$ and $\varphi=\varphi_{F,\pi,d\to1}$. Denote $\BB_n=\{k_2\vect{e_2}+\dots+k_d\vect{e_d}:(k_2,\dots,k_d)\in[-n,n]^{d-1}\}$ and $\T^{\varphi(k)}=\T_F^{\varphi(k),d\to1}$. One has  $\Lang_k\left(\sa{\Z\vect{e_1}}{\pi(\T^{\varphi(k)})}\right)=\Lang_k(\Sigma)$ and $\T^{\varphi(k)}\subset\left(\B^{\BB_{\varphi(k)}}\right)^{\Z}$ is a subshift of finite type of order $r$. This subshift can be represented by a graph where the vertices are $\left(\B^{\BB_{\varphi(k)}}\right)^r\cap\Lang(\T^{\varphi(k)})$ and there is an edge between $u_1\dots u_r\in\left(\B^{\BB_{\varphi(k)}}\right)^r$ and $v_1\dots v_r\in\left(\A^{\BB_{\varphi(k)}}\right)^r$ if $u_1\dots u_rv_r=u_1v_1\dots v_r\in\left(\B^{\BB_{\varphi(k)}}\right)^{r+1}\cap\Lang(\T^{\varphi(k)})$~\cite{Lind-Marcus-1995}. Thus this graph has at most $\card(\B)^{r(2\varphi(k))^{d-1}}$ vertices and can be viewed as an automaton which accepts words of $\Lang(\pi(\T^{\varphi(k)}))$ if we have eliminated components which cannot be prolonged infinitely (this takes a linear time in the size of the graph). 

Thus, to determine if $u\in\A^k$ is not in the language of $\Sigma$, it is sufficient to show that it is not in the language of $\sa{\vect{e_1}}{\pi(\T^m)}$ for some $m\in\N$. To do that we implement an algorithm which progressively, for each $m\in\N$,  explores the graph generated by $\T^m$ described previously, eliminates non-infinite component and search if $u$ is accepted with the corresponding automaton. There exists a constant $M>0$ such that at each steep, one knows if $u\in\Lang(\sa{\vect{e_1}}{\pi(\T^m)})$ in time less than $k\times\card(\A)^{r(2m)^{d-1}}$. Thus the corresponding Turing machine halts on $u\notin\Lang(\Sigma)$ in time \[\Dtime_{\TM}(k)\leq M\, k\sum_{m=1}^{\varphi(k)}\card(\A)^{r(2m)^{d-1}}\leq M\, \varphi(k)\card(\A)^{r(2\varphi(k))^{d-1}}\]
It follows that $\max(\log,\varphi^{d-1})\succ\log\circ \Dtime_{\TM}$.We deduce the first point of the theorem.

To prove the second point, the naive procedure to find a configuration of $\sa{\vect{e_1}}{\pi(\T^m)}$ which contains $u$ in the center is to start from an element of $\left(\B^{\BB_n}\right)^r\cap\Lang(\T^m)$ and complete it respecting the condition $F$ until to find again the same element on the left and on the right. To be sure to explore all the orbits it is possible to order them lexicographically. Thus, the algorithm just need to know the last orbit checked, this need $r(2m)^{d-1}$ space to know if $u\in\Lang(\sa{\vect{e_1}}{\pi(\T^m)})$. If $u\notin\Lang(\Sigma)$, the algorithm halts when we explore $\left(\B^{\BB_n}\right)^r\cap\Lang(\T^{\varphi(k)})$. So there exists $M>0$ such that $M(\varphi(k))^{d-1}\geq\Dspace_{\TM}(k)$. We recall that the word $u$ is written on an annex tape which is only used for the reading and which is not counted in $\Dspace_{\TM}$.
\end{proof}

\begin{remark}
These theorems do not generalize to dimension 2: Theorem~\ref{theorem:Folower} uses a characterization of one dimensional sofic subshifts with folder sets and Theorem~\ref{theorem.ComputationalBounds}  is blocked by the undecidability of emptiness of two-dimensional subshifts of finite type.  
\end{remark}

%

\section{Some classes of speed of convergence}\label{section:example}

In this section we present some examples of one-dimensional subshifts and give the sharp realization by SFT or sofic subshift. We do not give formal proof but just some elements to understand the different examples. Example~\ref{ex.constant} recalls a result of~\cite{Pavlov-Schraudner-2010}, adapted with the formalism of speed of convergence, one obtains a characterization of one-dimensional subshifts with a constant sharp realization. Proposition~\ref{loglog} specifies that there is no one-dimensional subshift for which the speed of the sharp realization is between constant and $n\mapsto\log(\log(n))$. Examples~\ref{ex.log},~\ref{ex.log2} and~\ref{ex.log3} exhibit subshifts with the speed of convergence of sharp realization $n\mapsto\log(n)$ and Example~\ref{ex.lin} exhibit one with sharp realization  $n\mapsto n$. Example~\ref{ex.dtime} exhibit subshifts where the speed of convergence of sharp realization is given by the space necessary by a Turing machine to compute a language. To finish, Example~\ref{ex.noncomput} gives a subshift where the set of speed of convergence does not contain any recursive function.

\begin{example}\label{ex.constant}
A subshift is constant-realizable by SFT if and only if it is a stable sofic. A subshift is constant-realizable by sofic if and only if it is a sofic. These results are detailed in~\cite{Pavlov-Schraudner-2010}.
\end{example}

\begin{proposition}\label{loglog}
If a one-dimensional subshift $\Sigma$ is $\varphi$-realizable by sofic with $\varphi\in o(\log(\log(n)))$ then $\Sigma$ is sofic, so it is constant-realizable by sofic.
\end{proposition}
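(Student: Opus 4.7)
The plan is to convert the speed bound $\varphi\in o(\log\log n)$ into a very small deterministic space bound on a Turing machine semi-deciding $\Lang(\Sigma)^{c}$, and then invoke the classical sublogarithmic-space theorem to deduce that $\Lang(\Sigma)$ is regular, hence that $\Sigma$ is sofic.

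First I would set up the hypothesis. Assume $\Sigma$ is $\varphi$-realizable by sofic, so there exist some $d\geq 2$, a $d$-dimensional SFT $\T_F$, and a morphism $\pi$ with $\sa{1}{\pi(\T_F)}=\Sigma$ and $\varphi_{F,\pi,d\to 1}\prec\varphi$. The case of interest is $d=2$, which is the minimal co-dimension realization available for any one-dimensional effective subshift by Theorem~\ref{Th:ProjectiveSubactionSofic}. Applying Theorem~\ref{theorem.ComputationalBounds} to this realization produces a Turing machine $\TM$ halting precisely on inputs in $\Lang(\Sigma)^{c}$ and satisfying $\Dspace_{\TM}\prec(\varphi_{F,\pi,d\to 1})^{d-1}$. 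For $d=2$ this reads $\Dspace_{\TM}(k)=O(\varphi(k))=o(\log\log k)$.

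Next I would invoke the classical theorem of Hartmanis–Stearns (and its refinement by Alberts/Szepietowski): a language recognized by a deterministic Turing machine in space $o(\log\log n)$ is necessarily regular. Applied to $\TM$ this forces $\Lang(\Sigma)^{c}$ to be regular; since regular languages are closed under complementation, $\Lang(\Sigma)$ itself is regular. Weiss's characterization of one-dimensional sofic subshifts as exactly those with a regular language~\cite{Weiss-1973} then yields that $\Sigma$ is sofic, and Example~\ref{ex.constant} finishes the argument: every sofic subshift is constant-realizable by sofic.

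The main obstacle is twofold. Technically, the sublog-log-$n$ regularity theorem is usually stated for total deterministic machines, whereas $\TM$ is only guaranteed to halt on $\Lang(\Sigma)^{c}$; one must check (or cite) the version of the theorem valid for semi-deciders, which follows from the same pumping-on-configurations argument since the number of distinct configurations in space $o(\log\log n)$ is $o(\log n)$. Conceptually, the bound from Theorem~\ref{theorem.ComputationalBounds} degrades as $\varphi^{d-1}$, so for $d\geq 3$ the resulting space bound is only $o((\log\log n)^{d-1})$, which already lies above the $\log\log n$ regularity threshold. Handling higher-dimensional realizations therefore requires either reducing to the minimal case $d=2$ (by observing that a $\varphi,d$-realization is what the hypothesis supplies for the particular $d$ at hand, and that the conclusion $\Sigma$ sofic is independent of the witness) or an auxiliary combinatorial argument via Theorem~\ref{theorem:Folower} controlling the stabilization of bounded follower sets; the simplest route is to state and prove the proposition for the minimal realizing dimension, which is what matters for the intended dichotomy between constant and $\log\log n$ speeds.
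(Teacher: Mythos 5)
Your proposal follows essentially the same route as the paper: apply Theorem~\ref{theorem.ComputationalBounds} to extract a Turing machine deciding $\Lang(\Sigma)^{c}$ in space $o(\log\log n)$, then invoke the classical sublogarithmic-space regularity theorem (the paper cites~\cite{Hopcroft-Ullman-1969}) to conclude that the language is regular and hence $\Sigma$ is sofic. Your added caveats about semi-deciders and about the exponent $\varphi^{d-1}$ for $d\geq 3$ are legitimate points the paper silently glosses over (it implicitly works with $d=2$), but they do not change the argument.
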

\begin{proof}
Assume that a subshift $\Sigma$ is $\varphi$-realizable by sofic with $\varphi\in o(\log(\log(n)))$, then by Theorem~\ref{theorem.ComputationalBounds} one deduces that there exists a Turing machine $\TM$ which defines $\Lang_{\textrm{rect}}(\Sigma)^c$ in space $o(\log(\log(n)))$. Thus $\Lang_{\textrm{rect}}(\Sigma)^c$ is rational (see~\cite{Hopcroft-Ullman-1969}), that is to say $\Sigma$ is sofic. 
\end{proof}

Let $\Lang\subset \A^{\ast}$ be a language and the symbol $\$$ does not appear in $\A$. Define the subshift $\TT{\Lang}=\T_{F_{\Lang}}\subset \A'^{\Z}$ where $\A'=\A\cup\{\$\}$ and $F=\{\$u\$ : u\notin\Lang\}$. If $\Lang$ is effective then $\TT{\Lang}$ is an effective subshift. Moreover it contains a transitive sofic subshift of positive entropy, so it verifies the condition $\widetilde{\mathcal{H}}$ (see section~\ref{section:RealizationSFTvsSofic}), thus it is sufficient to study realization by projective subaction of sofic. 

\begin{example}\label{ex.log}
Consider $\Lang_= =\{a^nb^n:n\in\N\}$. Then for all $k\in\N$, $\TT{\Lang_=}\subset\{a,b,\$\}^{\Z}$ is sharp $\log,2$-realizable by subshift of finite type and sofic.
 
Theorem~\ref{theorem:Folower} gives an upper bound. For the lower bound, consider the subshift of finite type $\T_{F'}\subset\{a,b,\$,0_a,1_a,\emptyset_a,0_b,1_b,\emptyset_b\}^{\Z^2}$ where $F'$ are the forbidden patterns of shape $\U=\vbox to 8pt{\hbox to 16pt{\begin{tikzpicture}[scale=0.2]\foreach \x\y in {-1/0,0/0,1/0,0/-1}{\draw[thin,black!50] (\x,\y) rectangle (\x+1,\y+1);}\end{tikzpicture}}}$ which do not appear in the configuration represented in Figure~\ref{figure.ConfigEgal}. The factor $\pi$ maps $\$$ on $\$$, $\{0_a,1_a,\emptyset_a\}$ on $a$ and $\{0_b,1_b,\emptyset_b\}$ on $b$. The principe is to implement a counter and compare it at the frontier.

\begin{figure}[h!]
\begin{center}
\begin{tikzpicture}[scale=0.5]
\clip[draw,decorate,decoration={random steps, segment length=3pt, amplitude=1pt}] (0.2,0.2) rectangle (27.8,8.8);

\foreach \x in {0,1,...,27}{
\foreach \y in {0,1,...,8}{
\draw[dashed,black!50] (\x,0)--(\x,9);
\draw[dashed,black!50] (0,\y)--(28,\y);
\draw (\x+0.5,\y+0.5) node {$\$$};
}}
\foreach \x\y in {2,3,...,13}{
\fill [white] (\x+0.1,2.1) rectangle (\x+0.9,2.9);
\fill [opacity=0.2,blue] (\x,2) rectangle (\x+1,3);
\draw (\x+.5,2.5) node{$a$};}

\foreach \x in {14,15,...,25}{
\fill [white] (\x+0.1,2.1) rectangle (\x+0.9,2.9);
\fill [opacity=0.2,red] (\x,2) rectangle (\x+1,3);
\draw (\x+.5,2.5) node{$b$};
}

\foreach \x\y in {8/4,12/4,13/5}{
\fill [white] (\x+0.1,\y+0.1) rectangle (\x+0.9,\y+0.9);
\fill [opacity=0.2,blue] (\x,\y) rectangle (\x+1,\y+1);
\draw (\x+.5,\y+.5) node{$0_a$};
}

\foreach \x\y in {19/4,15/4,14/5}{
\fill [white] (\x+0.1,\y+0.1) rectangle (\x+0.9,\y+0.9);
\fill [opacity=0.2,red] (\x,\y) rectangle (\x+1,\y+1);
\draw (\x+.5,\y+.5) node{$0_b$};
}

\foreach \x\y in {4/3,6/3,7/4,8/3,10/3,11/4,12/3,12/5}{
\fill [white] (\x+0.1,\y+0.1) rectangle (\x+0.9,\y+0.9);
\fill [opacity=0.2,blue] (\x,\y) rectangle (\x+1,\y+1);
\draw (\x+.5,\y+.5) node{$\emptyset_a$};
}

\foreach \x\y in {23/3,21/3,20/4,19/3,17/3,16/4,15/3,15/5}{
\fill [white] (\x+0.1,\y+0.1) rectangle (\x+0.9,\y+0.9);
\fill [opacity=0.2,red] (\x,\y) rectangle (\x+1,\y+1);
\draw (\x+.5,\y+.5) node{$\emptyset_b$};
}

\foreach \x\y in {3/3,5/3,5/4,6/4,7/3,8/5,9/3,9/4,9/5,10/4,10/5,11/3,11/5,13/3,13/4,13/6}{
\fill [white] (\x+0.1,\y+0.1) rectangle (\x+0.9,\y+0.9);
\fill [opacity=0.2,blue] (\x,\y) rectangle (\x+1,\y+1);
\draw (\x+.5,\y+.5) node{$1_a$};
}

\foreach \x\y in {24/3,22/3,22/4,21/4,20/3,19/5,18/3,18/4,18/5,17/4,17/5,16/3,16/5,14/3,14/4,14/6}{
\fill [white] (\x+0.1,\y+0.1) rectangle (\x+0.9,\y+0.9);
\fill [opacity=0.2,red] (\x,\y) rectangle (\x+1,\y+1);
\draw (\x+.5,\y+.5) node{$1_b$};
}
\end{tikzpicture}
\end{center}
\caption{A configuration of $T_{F'}$.}\label{figure.ConfigEgal}
\end{figure}

\end{example}

\begin{example}\label{ex.log2}
Consider $\Lang_2=\{a^nb^{n^2} : n\in\N \}$. Then $\TT{\Lang_2}$ is sharp $\log,2$-realizable by sofic.
\end{example}

\begin{example}\label{ex.log3}
Let $s$ be a substitution on $\A$ and consider $\Lang_s=\{s^n(a) : n\in\N \textrm{ and }a\in\A \}$. Then $\TT{\Lang_s}$ is sharp $\log,2$-realizable by sofic.
\end{example}

\begin{example}\label{ex.lin}
For a word $u\in\{0,1\}^{\ast}$, define $\overline{u}$ the miror of $u$. Consider $\Lang_{\textrm{palin}}=\{u\overline{u}:u\in\in\{0,1\}^{\ast}\}$, $\TT{\Lang_{\textrm{palin}}}\subset\{0,1,\$\}^{\Z}$ is sharp $\textrm{Id},2$-realizable by projective subaction of sofic where $\begin{array}{lrcl}\textrm{Id}:&\N&\to&\N\\& k&\mapsto& k\end{array}$.

Theorem~\ref{theorem:Folower} gives an upper bound. For the lower bound, consider the subshift of finite type $\T_{F'}\subset\{\$,0_l,1_l,0_r,1_r\}^{\Z^2}$ where $F'$ are the forbidden patterns of shape $\U=\vbox to 8pt{\hbox to 16pt{\begin{tikzpicture}[scale=0.2]\foreach \x\y in {0/0,1/0,1/1}{\draw[thin,black!50] (\x,\y) rectangle (\x+1,\y+1);}\end{tikzpicture}}}$ or $\vbox to 8pt{\hbox to 16pt{\begin{tikzpicture}[scale=0.2]\foreach \x\y in {0/0,0/1,1/0}{\draw[thin,black!50] (\x,\y) rectangle (\x+1,\y+1);}\end{tikzpicture}}}$ which do not appear in the configuration represented in Figure~\ref{figure.ConfigEgal}. The factor $\pi$ maps $\$$ on $\$$, $\{0_l,0_r\}$ on $0$ and $\{1_l,1_r\}$ on $1$. The principe is to compare the beginning and the end of a word written in $\{0,1\}^{\ast}$.

\begin{figure}[h!]
\begin{center}
\begin{tikzpicture}[scale=0.5]
\clip[draw,decorate,decoration={random steps, segment length=3pt, amplitude=1pt}] (0.2,0.2) rectangle (17.8,11.8);

\foreach \x in {0,1,...,18}{
\foreach \y in {0,1,...,11}{
\draw[dashed,black!50] (\x,0)--(\x,12);
\draw[dashed,black!50] (0,\y)--(18,\y);
\draw (\x+0.5,\y+0.5) node {$\$$};
}}

\foreach \x\y in {1/2,2/2,4/2,7/2,2/3,3/3,5/3,8/3,3/4,4/4,6/4,4/5,5/5,7/5,5/6,6/6,8/6,6/7,7/7,7/8,8/8,8/9}{
\fill [white] (\x+0.1,\y+0.1) rectangle (\x+0.9,\y+0.9);
\fill [opacity=0.2,blue] (\x,\y) rectangle (\x+1,\y+1);
\draw (\x+.5,\y+.5) node{$0_l$};
}

\foreach \x\y in {3/2,5/2,6/2,8/2,4/3,6/3,7/3,5/4,7/4,8/4,6/5,8/5,7/6,8/7}{
\fill [white] (\x+0.1,\y+0.1) rectangle (\x+0.9,\y+0.9);
\fill [opacity=0.2,blue] (\x,\y) rectangle (\x+1,\y+1);
\draw (\x+.5,\y+.5) node{$1_l$};
}

\foreach \x\y in {16/2,15/2,13/2,10/2,15/3,14/3,12/3,9/3,14/4,13/4,11/4,13/5,12/5,10/5,12/6,11/6,9/6,10/7,11/7,9/8,10/8,9/9}{
\fill [white] (\x+0.1,\y+0.1) rectangle (\x+0.9,\y+0.9);
\fill [opacity=0.2,red] (\x,\y) rectangle (\x+1,\y+1);
\draw (\x+.5,\y+.5) node{$0_r$};
}

\foreach \x\y in {14/2,12/2,11/2,9/2,13/3,11/3,10/3,12/4,10/4,9/4,11/5,9/5,10/6,9/7}{
\fill [white] (\x+0.1,\y+0.1) rectangle (\x+0.9,\y+0.9);
\fill [opacity=0.2,red] (\x,\y) rectangle (\x+1,\y+1);
\draw (\x+.5,\y+.5) node{$1_r$};
}

\end{tikzpicture}
\end{center}
\caption{A configuration of $T_{F'}$.}\label{figure.ConfigEgal}
\end{figure}
\end{example}

\begin{example}\label{ex.dtime}
If $\Lang$ is a language computable in time $\Dtime_{\TM}$ with a non-deterministic Turing machine $\TM$ then $\TT{\Lang}$ is $\Dtime_{\TM},2$-realizable.

If $\Lang$ is a language computable in time $\Dspace_{\TM}$ with a Turing machine $\TM$ then $\TT{\Lang}$ is sharp $\Dspace_{\TM},2$-realizable.
\end{example}

\begin{example}\label{ex.noncomput}
Consider an enumeration of Turing machine and \[F=\{01^n0: n\textrm{ such that the Turing machine of number $n$ halts} \}.\] 
Since $F$ is recursively enumerable, the subshift $\Sigma=\T_F$ is an effective subshift, moreover it satisfies the property $\widetilde{\mathcal{H}}$. Thus for all $\varphi\in\cF^{\sofic}_{\Sigma,2}$, $\varphi$ is larger than all recursive function. Otherwise it could be possible to decide if the Turing machine of number $n$ halts.
\end{example}

For the subshift with rational set of forbidden patterns, which corresponds to sofic subshift,  there exists a characterization with the optimal speed of convergence. There is the only characterization thanks to speed of convergence of class of effective subshift known, but it seems interesting to explore the links between dynamical properties of effective subshift and their optimal speed of convergence. 



\end{document}